\DeclareSymbolFont{cyrletters}{OT2}{wncyr}{m}{n}
\DeclareMathSymbol{\Sha}{\mathalpha}{cyrletters}{"58}
\newcommand{\field}[1]{\mathbf #1}
\newcommand{\mf}[1]{\mathfrak #1}
\newcommand{\mc}[1]{\mathcal #1}
\newcommand{\ms}[1]{\mathscr #1}
\newcommand{\widebar}[1]{\overline{#1}}
\newcommand{\R}{\field R}
\newcommand{\F}{\field F}
\newcommand{\Z}{\field Z}
\newcommand{\Q}{\field Q}
\newcommand{\simto}{\stackrel{\sim}{\to}}
\newcommand{\eps}{\varepsilon}
\renewcommand{\phi}{\varphi}
\newcommand{\Hom}{\operatorname{Hom}}
\newcommand{\shom}{\ms H\!om}
\newcommand{\rshom}{\mathbf{R}\shom}
\DeclareMathOperator{\sh}{Sh}
\newcommand{\send}{\ms E\!nd}
\newcommand{\spec}{\operatorname{Spec}}
\renewcommand{\P}{\field P}
\DeclareMathOperator{\Pic}{Pic}
\newcommand{\sPic}{\ms P\!ic}
\DeclareMathOperator{\pr}{pr}
\newcommand{\m}{\boldsymbol{\mu}}
\newcommand{\G}{\field G} %for the multiplicative and additive groups
\renewcommand{\H}{\operatorname{H}}
\DeclareMathOperator{\ext}{\operatorname{Ext}}
\newcommand{\ch}{\operatorname{char}}
\DeclareMathOperator{\per}{per}
\DeclareMathOperator{\ind}{ind}
\DeclareMathOperator*{\tensor}{\otimes}
\DeclareMathOperator{\rk}{\operatorname{rk}}
\newcommand{\inj}{\hookrightarrow}
\DeclareMathOperator{\aut}{\operatorname{Aut}}
\DeclareMathOperator{\isom}{\operatorname{Isom}}
\DeclareMathOperator{\M}{\operatorname{M}}
\DeclareMathOperator{\Br}{\operatorname{Br}}
\newcommand{\invlim}{\varprojlim}
\DeclareMathOperator{\B}{\operatorname{\sf B\!}}
\newtheorem{lem}{Lemma}[subsection]
\renewcommand{\thelem}{\ifnum\value{subsubsection}>0{\thesubsubsection.\arabic{lem}}\else{\ifnum\value{subsection}>0{\thesubsection.\arabic{lem}}\else{\thesection.\arabic{lem}}\fi}\fi}
\newtheorem{thm}[lem]{Theorem}
\newtheorem*{theorem}{Theorem}
\newtheorem{conj}[lem]{Conjecture}
\newtheorem*{standard conjecture}{Standard Conjecture}
\newtheorem{prop}[lem]{Proposition}
\newtheorem{cor}[lem]{Corollary}
\newtheorem{claim}[lem]{Claim}
\newtheorem*{substandard conjecture}{Standard Conjecture}
\theoremstyle{definition}
\newtheorem{defn}[lem]{Definition}
\newtheorem{example}[lem]{Example}
\newtheorem{hyp}[lem]{Hypothesis}
\theoremstyle{remark}
\newtheorem{rem}[lem]{Remark}
\newtheorem{notn}[lem]{Notation}
\newtheorem{ques}[lem]{Question}
\theoremstyle{definition}
\theoremstyle{remark}
\author{Max Lieblich}
\address{Fine Hall, Washington Road, Princeton, NJ}
\thanks{The author was partially supported by NSF grant DMS-0758391. He would like to thank the Clay Mathematics Institute for hosting the conference at which some of this work was presented and Peter Newstead for his many inspiring papers}
\theoremstyle{remark}
\title {Arithmetic aspects of moduli spaces of sheaves on curves}
\begin{document}
\begin{abstract}
  We describe recent work on the arithmetic properties of moduli
  spaces of stable vector bundles and stable parabolic bundles on a
  curve over a global field.  In particular, we describe a connection
  between the period-index problem for Brauer classes over the
  function field of the curve and the Hasse principle for rational
  points on \'etale forms of such moduli spaces, refining classical
  results of Artin and Tate.
\end{abstract}
\maketitle

\tableofcontents

\section* {Introduction}

The goal of this paper is to introduce the reader to recent work on
some basic arithmetic questions about moduli spaces of vector bundles
on curves. In particular, we will focus on the correspondence between
rational-point problems on (\'etale) forms of such moduli spaces and
classical problems on the Brauer groups of function fields. This might
be thought of as a non-abelian refinement of the following result of Artin
and Tate (which we state in a special case).  

Fix a global field $K$ with scheme of integers $S$. (If $\ch K > 0$,
we assume that $S$ is proper so that it is unique.) Let $f: X \to S$
be a proper flat morphism of relative dimension 1 with a section $D
\subset X$ and smooth generic fiber $X_\eta$.  Let $\Br_\infty(X)$
denote the kernel of the restriction map
$\Br(X)\to\prod_{\nu|\infty}\Br(X\tensor K_\nu)$, i.e., the Brauer
classes which are trivial at the fibers over the Archimedean places.
(If $\ch K>0$ or $K$ is totally imaginary, $\Br_\infty(X)=\Br(X)$.)

\begin{theorem}[Artin-Tate]
  There is a natural isomorphism $$\Br_\infty(X)\simto\Sha^1(\spec
  K,\operatorname{Jac}(X_\eta)).$$
\end{theorem}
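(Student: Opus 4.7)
The plan is to extract the left-hand side from the Leray spectral sequence
$$H^p(S_\et, R^q f_* \G_m) \Longrightarrow H^{p+q}(X_\et, \G_m)$$
and then match it against the right-hand side by passage to the generic fiber. First I would compute the low direct images. Since $f$ is proper flat with smooth geometrically connected generic fiber (connectedness guaranteed by the section), $R^0 f_* \G_m = \G_m$; the relative Picard functor gives $R^1 f_* \G_m = \Pic_{X/S}$; and Tsen's theorem applied to the geometric fibers---each a curve over an algebraically closed field---gives $R^2 f_* \G_m = 0$. The section $D$ retracts $f^*$ on cohomology, so the edge maps $H^p(S,\G_m) \to H^p(X,\G_m)$ are split injections and the only potentially nontrivial differential $d_2 : H^1(S,\Pic_{X/S}) \to H^3(S,\G_m)$ vanishes. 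This yields a split short exact sequence
$$0 \to \Br(S) \to \Br(X) \to H^1(S,\Pic_{X/S}) \to 0.$$

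Second, I would simplify the right-hand term. The degree map, split by $D$, gives $\Pic_{X/S} \cong \bZ \oplus \Pic^0_{X/S}$, and since $\pi_1^\et(S)$ is profinite while $\bZ$ is torsion-free one has $H^1(S_\et,\bZ)=0$. Hence $H^1(S,\Pic_{X/S}) = H^1(S,\Pic^0_{X/S})$. The generic fiber of $\Pic^0_{X/S}$ is $\operatorname{Jac}(X_\eta)$, so restriction to $\eta$ produces a natural map $H^1(S,\Pic^0_{X/S}) \to H^1(\spec K, \operatorname{Jac}(X_\eta))$.

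The crux is to identify this restriction as landing in $\Sha^1_\infty \subset H^1(K, \operatorname{Jac})$ (the kernel at all finite places) and then to match the subgroup $\Br_\infty(X)/\Br(S)$ inside $\Br(X)/\Br(S)$ with the full $\Sha^1$. For triviality at each finite place I would invoke the N\'eron mapping property: on the smooth locus of $f$, $\Pic^0_{X/S}$ coincides with the identity component of the N\'eron model of $\operatorname{Jac}(X_\eta)$, so that a class in $H^1(\O^{\rm h}_{S,v},\Pic^0)$ gives a $\Pic^0$-torsor over a Henselian base, which may be trivialized via Lang's theorem on the finite residue field together with Hensel's lemma. The discrepancy between $\Sha^1$ and $\Sha^1_\infty$ records the archimedean obstructions; these vanish in the function-field and totally imaginary cases (explaining the parenthetical in the statement), while in general they are captured precisely by the $2$-torsion classes at real places appearing in $\Br(\O_K)$, and the condition $\alpha\in\Br_\infty(X)$ is exactly what cuts these out of $\Br(X)/\Br(S)$.

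The main obstacle will be the N\'eron model comparison at places of bad reduction, where $\Pic^0_{X/S}$ may fail to be proper or smooth and where component groups of singular fibers can intervene. Controlling these components using the section $D$---and checking that the resulting local cohomology on the strict henselizations still vanishes---is the technical heart of the Artin--Tate argument, and is where the hypotheses that $X$ is proper over $S$ with $X_\eta$ smooth and the existence of a section $D$ all do essential work.
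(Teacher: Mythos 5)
The paper does not prove this statement: it is quoted as a classical theorem of Artin and Tate, and the only gloss offered is the modular one, namely that the isomorphism sends a $\m_n$-gerbe $\ms X\to X$ to the relative moduli space of invertible twisted sheaves of degree $0$, a torsor under $\operatorname{Jac}(X_\eta)$. Your Leray spectral sequence route is the classical argument (Grothendieck's \emph{Le groupe de Brauer III}, \S 4), and in outline it is the right one. In fact the two approaches are the same thing seen from two sides: the pushforward to $S$ of the degree-$0$ twisted Picard stack is precisely the $\R^1f_*\G_m$-torsor produced by the Leray edge map (this is the content of Claim \ref{gerbe torsor mama}), so the paper's ``modern point of view'' is a geometric realization of your edge homomorphism $\Br(X)\to\H^1(S,\R^1f_*\G_m)$. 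What the modular picture buys is that the comparison with $\Sha^1$ becomes a statement about adelic and rational points on an explicit torsor rather than a diagram chase.

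Two steps in your sketch need more than you give them. First, the stalk of $\R^2f_*\G_m$ at a geometric point $\widebar s$ of $S$ is $\H^2(X\times_S\spec\O^{sh}_{S,\widebar s},\G_m)$, not the cohomology of the fiber $X_{\widebar s}$; passing to the closed fiber before applying Tsen is Artin's theorem on the Brauer group of a relative curve over a henselian base, a genuine input rather than a formality. Second, and more seriously, the splitting $\Pic_{X/S}\cong\Z\oplus\Pic^0_{X/S}$ is simply false at places where the fiber is reducible: a line bundle has a separate degree on each irreducible component of a bad fiber, so the quotient of $\Pic_{X/S}$ by the subfunctor extending $\operatorname{Jac}(X_\eta)$ involves the lattice of fiber components, and the identification of $\Pic^0_{X/S}$ with the identity component of the N\'eron model is Raynaud's theorem, which uses regularity of $X$ and the section. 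The comparison of $\H^1(S,\Pic^0_{X/S})$ with $\Sha^1$ then needs both inclusions: your Lang--Hensel argument does show that classes spread out over $S$ are locally trivial, but the converse --- that every everywhere-locally-trivial $\operatorname{Jac}(X_\eta)$-torsor comes from a class over $S$ --- is exactly where the component groups $\Phi_v$ and the local invariants $\delta_v$ enter, and where the section $D$ does its essential work by forcing $\delta_v=1$ for all $v$. You have correctly located the difficulty, but as written the middle of the argument asserts an isomorphism that only holds after this bookkeeping, and without the section the theorem is false as stated (the two groups then differ by finite groups measured by these local indices).
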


From a modern point of view, this isomorphism arises by sending a
$\m_n$-gerbe $\ms X\to X$ to the relative moduli space of invertible
twisted sheaves of degree $0$.  (We have provided a brief review of
gerbes, moduli of twisted sheaves, and the Brauer group in the form of
an appendix.) Our goal will be to study the properties of higher-rank
moduli of twisted sheaves.  Each moduli space carries an adelic point
(is in the analogue of the Tate-Shafarevich group), but most of the
moduli spaces are geometrically rational (or at least rationally
connected) with trivial Brauer-Manin obstruction, so that the Hasse
principle is conjectured to hold.  This yields connections between
information about the complexity of Brauer classes on arithmetic
surfaces and conjectures on the Hasse principle for geometrically
rational varieties.

As the majority of work on moduli spaces of vector bundles on curves
is done in a geometric context, we give in Section \ref{Torelli
  section} a somewhat unconventional introduction to the study of
forms of moduli. This is done primarily to fix notation and introduce
some basic constructions. We also use this section to introduce a
central idea: forms of the moduli problem naturally give forms of the
stack and not merely of the coarse moduli space, and the relation
between these forms captures cohomological information which
ultimately is crucial for making the arithmetic connections. In this
geometric section, we use this philosophy to prove a silly
``non-abelian Torelli theorem''.

Starting with Section \ref{questions} we turn our attention to
arithmetic problems. In particular, after asking some basic questions
in Section \ref{questions}, we link a standard conjecture on the Hasse
principle for 0-cycles of degree 1 (Conjecture 1.5(a) of \cite{ct95}) to
recent conjectures on the period-index problem in Section
\ref{classical}. We also recast some well-known results due to Lang
and de Jong on the Brauer group in terms of rational points on moduli
spaces of twisted sheaves. This serves to create a tight link between
the Hasse principle and the period-index problem for unramified Brauer
classes on arithmetic surfaces. In particular, we get the following
refinement of the Artin-Tate isomorphism.

Let $\ms{X} \to X$ be a $\m _ n$-gerbe which is trivial on geometric
fibers of $f$ and over infinite places, and let $\alpha \in \Br_\infty
(X)$ denote its associated Brauer class.  As explained in Proposition
\ref{forms}, the stack $\ms{M} _ {\ms{X} / S} (n, \ms{O} (D))$ of flat
families of stable $\ms{X}$-twisted sheaves of rank $n$ and
determinant $\ms{O} (D)$ is a form of the stack $\ms{M} _ {X / S} (n,
\ms{O} (D))$ of (non-twisted) stable sheaves with the same discrete
invariants. We write $M _ {\ms{X} / S} (n, \ms{O} (D))$ for the coarse
moduli space of $\ms{M} _ {\ms{X} / S} (n, \ms{O} (D))$ and $M _
{\ms{X} _ K / K} (n, \ms{O} (D _ K))$ for its generic fiber over $S$.

\begin{theorem}[L]
  The class $\alpha$ satisfies $\per (\alpha) = \ind (\alpha)$ if and
  only if the Hasse principle for 0-cycles of degree 1 holds for the smooth
  projective geometrically rational $K$-scheme $M _ {\ms{X} _ K / K}
  (n, \ms{O} (D _ K))$.
\end{theorem}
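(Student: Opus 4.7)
The plan is to translate the Hasse principle for $0$-cycles of degree $1$ on $M := M_{\ms X_K/K}(n,\ms O(D_K))$ into a divisibility statement relating $\ind(\alpha)$ and $n$, which in the distinguished case $n=\per(\alpha)$ amounts to the period-index equality. First I would observe that local $0$-cycles of degree $1$ on $M$ are automatic: at archimedean places $\ms X$ is trivial and $M$ is the classical rank-$n$ moduli space, which is geometrically rational and has $K_v$-rational points; at finite places the adelic point promised by the introduction provides $K_v$-points. So the Hasse principle for $M$ reduces to the mere existence of a global $0$-cycle of degree $1$ on $M$.

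The next step is to use the projective-bundle interpretation of closed points of $M$: a closed point of $M$ with residue field $L$ corresponds to a stable rank-$n$ projective bundle, equivalently an Azumaya algebra of degree $n$, on $X_L$ whose Brauer class equals $\alpha|_{X_L}$. This perspective bypasses the descent obstruction one might encounter for twisted sheaves themselves, since a projective bundle representing $\alpha|_{X_L}$ always descends even when no $\ms X_L$-twisted sheaf does. The coprimality $\gcd(n,\deg D)=1$ is what makes this identification work in the clean form of Proposition \ref{forms}.

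For the direction ``Hasse principle $\Rightarrow \per(\alpha)=\ind(\alpha)$'', I would unpack a global $0$-cycle $\sum n_i[P_i]$ of degree $1$ on $M$: each $P_i$ produces an Azumaya algebra of degree $n$ on $X_{L_i}$ representing $\alpha|_{X_{L_i}}$, so $\ind(\alpha|_{X_{L_i}}) \mid n$ for each $i$. The standard restriction bound $\ind(\alpha) \mid [L_i:K]\cdot\ind(\alpha|_{X_{L_i}})$, combined with the B\'ezout-type observation that $\sum n_i[L_i:K]=1$ forces $\gcd_i[L_i:K]=1$, then yields $\ind(\alpha)\mid n$. Since $\per(\alpha)\mid n$ is built into the $\m_n$-gerbe hypothesis and $\per\mid\ind$ always holds, the equality $\per(\alpha)=\ind(\alpha)$ follows in the essential case $n=\per(\alpha)$.

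For the converse, given $\per(\alpha)=\ind(\alpha)=n$, the plan is to produce a $0$-cycle of degree $1$ on $M$ by constructing stable degree-$n$ Azumaya algebras on $X_K$ representing $\alpha$ with the prescribed determinant, or failing that over finite extensions whose residue degrees combine to $\gcd = 1$. A representing Azumaya algebra $A$ of degree $n$ exists by hypothesis; producing stable representatives with the correct determinant should follow from a density/genericity argument using the geometric rationality and irreducibility of $M$. I expect the main obstacle to lie here: upgrading the algebraic existence of $A$ to the geometric existence of a $0$-cycle of degree $1$ on $M$. The forward direction is essentially a Brauer-group computation, but the converse requires a genuine moduli existence theorem paralleling Narasimhan--Ramanan's construction of stable bundles with coprime rank and degree, executed carefully enough in the twisted setting to produce closed points whose residue degrees assemble into a $0$-cycle of degree $1$ over $K$.
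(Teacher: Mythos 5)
Your overall architecture is right --- reduce the Hasse principle to the existence of a global $0$-cycle of degree $1$, extract index bounds from closed points of $M:=M_{\ms X_K/K}(n,\ms O(D_K))$, and handle the converse by constructing stable twisted sheaves of rank $n$ --- and your B\'ezout argument ($\gcd_i[L_i:K]=1$ together with $\ind(\alpha)\mid[L_i:K]\,\ind(\alpha|_{X_{L_i}})$) is a legitimate variant of the paper's device of pushing forward twisted sheaves and using semisimplicity at the generic point. But there is a genuine gap at the central step, namely your claim that a closed point $P\in M$ with residue field $L$ yields an Azumaya algebra of degree $n$ on $X_L$ \emph{whose Brauer class is $\alpha|_{X_L}$}. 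The projective bundle (equivalently, the algebra $\send(V)$) does descend to $X_L$, but its Brauer class is $\alpha|_{X_L}+\beta_P|_{X_L}$, where $\beta_P\in\Br(L)$ is the obstruction to lifting $P$ to an object of the modular $\m_n$-gerbe $\ms M\to M$. What you actually obtain is therefore $\ind\bigl(\alpha|_{X_{L_i}}+\beta_i\bigr)\mid n$, a Faddeev-type bound from which $\ind(\alpha)\mid n$ does not follow (the naive estimate only gives $\ind(\alpha)\mid n^2$); this is exactly the phenomenon that makes the ramified case of Section \ref{parabolic forms} subtle. The missing idea is to prove that the class $\beta$ of the gerbe $\ms M\to M$ vanishes outright: since $M$ is geometrically rational with geometric Picard group $\Z$ and has points everywhere locally, the pullback $\Br(K)\to\Br(M)$ is an isomorphism, so $\beta$ is constant; the existence of local objects of the \emph{stack} at every place forces $\beta_{K_\nu}=0$ for all $\nu$; and reciprocity then gives $\beta=0$. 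Only after this does every closed point of $M$ lift to an honest twisted sheaf, at which point your computation goes through.

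A second, smaller gap: you take the local points at finite places as given (``the adelic point promised by the introduction''), but their existence is a substantive part of the theorem and must be proved, not quoted. The argument needs the triviality of the Brauer group of the special fibers over finite residue fields (Lang plus Wedderburn), Esnault's theorem to produce a rational point of the rationally connected coarse space of the special fiber, a second use of finiteness of the residue field to lift that point through the $\m_n$-gerbe to an actual twisted sheaf, and smoothness of the moduli stack over $S$ to deform that sheaf to the generic fiber. These local objects of the stack --- not merely local points of the coarse space --- are precisely what feed the vanishing of $\beta$ above, so this step cannot be outsourced. Your assessment of the converse direction, that the real work is a Narasimhan--Ramanan-type existence theorem for stable twisted sheaves of rank $n$ with prescribed determinant once $\ind(\alpha)=n$, is accurate and matches where the paper defers to its references.
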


Since $\Pic (M _ {\ms{X} _ {\widebar K} / {\widebar K}} (n, \ms{O} (D
_ {\widebar K}))) = \Z$, the condition of the theorem is equivalent to
the statement that the Brauer-Manin obstruction is the only
obstruction to the Hasse principle for this particular variety.  Just
as the element of $\Sha^1(\spec K,X_\eta)$ measures non-triviality of
$\alpha$, here the sucess (or failure) of the Hasse principle for
$M_{\ms X_K/K}(n,\ms O(D))$ measures the complexity of the division
algebra over $K(X)$ with Brauer class $\alpha$. As an example for
the reader unfamiliar with the period and index (reviewed in the
appendix), this says that if the Brauer-Manin obstruction
is the only one for $0$-cycles of degree $1$ on geometrically rational
varieties then any class of order $2$ in $\Br_\infty(X)$ must be the class
associated to a generalized quaternion algebra $(a,b)$ over $K(X)$.
 
When $\ch K = p >
0$, one can prove that any $\alpha \in \Br (X)$ whose period is 
relatively prime to $p$ satisfies $\per (\alpha) = \ind (\alpha)$ (see \cite{lieblich_twisted_2008}). The
proof uses the geometry of moduli spaces of twisted sheaves on
surfaces over finite fields, and therefore contributes nothing to our
understanding of the mixed-characteristic situation.

In Sections \ref{parabolic bundles} and \ref{parabolic forms} we start
to consider how one might generalize the Theorem to the case of
ramified Brauer classes.  Things here are significantly more
complicated -- for example, we know that there are ramified classes
whose period and index are unequal.  A nice collection of such
examples is provided in \cite{kunyavskiu_bicyclic_2006} in the form of
biquaternion algebras over $\Q(t)$ with Faddeev index $4$ such that
for all places $\nu$ of $\Q$ the restriction to $\Q_\nu(t)$ has index
$2$.  Can we view such algebras as giving a violation of some sort of
Hasse principle?  As we discuss in Section \ref{parabolic forms},
there is a canonical rational-point problem associated to such an 
algebra, but it fails to produce a counterexample to the Hasse
principle because it lacks a local point at some place!  In other
words, while these algebras seem to violate some sort of ``Hasse principle,''
the canonically associated moduli problems actually have \emph{local}
obstructions.  This phenomenon is intriguing and demands further investigation.

\section* {Acknowledgments}

The author thanks Jean-Louis Colliot-Th\'el\`ene for his extremely
valuable comments on the history surrounding the Brauer-Manin
obstruction and the conjectures growing out of it.  He also thanks the
referee for helpful corrections.

\section* {Notation and assumptions}

As in the introduction, given an arithmetic surface $X\to S$ with $S$
the scheme of integers of $K$, we will write $\Br_\infty(X)$ for the
subgroup of $\Br(X)$ of classes whose restriction to $X\tensor K_\nu$
is trivial for all Archimedean places of $K$.

We assume familiarity with the theory of algebraic stacks, as
explained in \cite{MR1771927}.  Given a stack $\ms X$, we will denote the inertia stack by $$\ms I(\ms X):=\ms X\times_{\Delta,\ms
  X\times\ms X,\Delta}\ms X;$$ this is the fiber product of the
diagonal with itself, and represents the sheaf of groups which assigns
to any object its automorphisms.  The notation related to moduli is
explained in the Appendix.  While gerbes, twisted sheaves, and the
Brauer group are briefly reviewed in the Appendix, the reader
completely unfamiliar with them will probably benefit from consulting
\cite{MR0344253,lieblich_moduli_2007,lieblich_twisted_2008}

Most of the material described here is derived from the more extensive
treatments in \cite{lieblich_period_2007,lieblich_twisted_2008}.  We
have thus felt free to merely sketch proofs.  The material of Sections
\ref{parabolic bundles} and \ref{parabolic forms} is new, but is not
in its final form.  Again, we have not spelled out all of the details;
they will appear once a more satisfactory understanding of the
phenomena described there has been reached.

\section {Forms of moduli and an isotrivial Torelli theorem} \label{Torelli section}

Let $C/K$ be a curve over a field with a $K$-rational point $p$ and $L$ an invertible sheaf on $C$.  The central objects of
study in this paper will be forms of the moduli stack $\ms
M_{C/K}(r,L)$ of stable vector bundles on $C$ of rank $r$ and
determinant $L$.  In this section we give a geometric introduction to
the existence and study of these forms.  It turns out that the most
interesting forms of $\ms M_{C/K}(r,L)$ are those which arise from
$\m_r$-gerbes $\ms C\to C$.  The reader unfamiliar with gerbes should
think of these as stacky forms of $C$; the curve $C$ itself
corresponds to the trivial gerbe $\B\m_r\times C$.  As a warm-up for
the rest of the paper, we give an amusing Torelli-type theorem for
such stacky forms of $C$.

We begin by discussing forms of the coarse space $M_C(r,L)$.  From a
geometric point of view, forms of a variety are simply isotrivial
families.

\begin{defn} \label{isotrivial} Given an algebraically closed field
  $k$, a morphism of $k$-schemes $f: X \to Y$ is \emph {isotrivial} if
  there is a faithfully flat morphism $g: Z \to Y$, a $k$-scheme $W$,
  and isomorphism $X \times _ Y Z \simto W \times Z$.
\end{defn}

Isotrivial families arise naturally from geometry, as the following
example shows.

\begin{example} \label{determinant example}

  Suppose $C _ \epsilon$ is an infinitesimal deformation of $C$ over
  $k [\epsilon]$ and $L _ \epsilon$ and $L ' _ \epsilon$ are two
  infinitesimal deformations of $L$. We claim that the moduli spaces
  $M _ {C _ \epsilon / k [\epsilon]} (r, L _ \epsilon)$ and $M _ {C _
    \epsilon / k [\epsilon]} (r, L ' _ \epsilon)$ are isomorphic.  To
  see this, note that $L ' _ \epsilon \tensor L ^{-1} _ \epsilon$ is
  an infinitesimal deformation of $\ms{O} _ C$, is thus an $r$th power
  in $\Pic (C _ \epsilon)$. Twisting by an $r$th root gives an
  isomorphism of moduli problems.  This obviously generalizes to
  deformations of $C$ over Artinian local $k$-algebras $A$.  As a
  consequence, if $R$ is a complete local $k$-algebra with maximal ideal
  $\mf m$ and residue field $k$ and $\mc L$ is an invertible sheaf of
  degree $d$ on $C\tensor R$ with reduction $L$ over the residue
  field, we see that $M_{C\tensor R/R}(r,\mc L)\cong
  M_{C/k}(r,L)\tensor R$.  Indeed, since $M_{C\tensor R/R}(r,\mc L)$
  and $M_{C/k}(r,L)$ are proper, the Grothendieck existence theorem
  shows that the natural map
\begin{align*}
\isom_R(M_{C\tensor R/R}(r,\mc L),M_{C/k}(r,L)\tensor R)&\to\\
\invlim\isom_{R_n}&(M_{C\tensor R_n/R_n}(r,\mc L_{R_n}),M_{C/k}(r,L)\tensor R_n)
\end{align*}
is an isomorphism, where $R_n:=R/\mf m^{n+1}$.  Since the identity map
over the residue field lifts to each infinitesimal level, we conclude
that there is an isomorphism over $R$.

Now, given $r$ and $d$ as above, let $M _ C (r, d)$ denote the moduli
space of all stable vector bundles of rank $r$ and degree $d$. The
determinant defines a morphism $\det: M _ C (r, d) \to \Pic _ {C / k}
^ d$. By the argument of the previous paragraph, all of the fibers of
$\det$ are mutually isomorphic.  This is a basic example of an
isotrivial family which arises naturally from geometry.
\end{example}

How can we classify isotrivial families? The classification is
familiar from descent theory. Let us illustrate how this works using
Example \ref{determinant example}. To simplify notation, let $M$
denote the moduli space $M _ C (r, L)$ for some fixed $L \in \Pic ^ d
_ {C / k}$, and write $X = C \times \Pic ^ d _ {C / k}$. Consider the
$\Pic ^ d _ {C / k}$-scheme $$T = \isom _ {\Pic ^ d _ {C / k}} (M (r,
d), M \times \Pic ^ d _ {C / k});$$ this is a left torsor under the
scheme of automorphisms $\aut (M)$. We now make one simplifying
assumption which will tie the geometry to algebraic constructions we
will do later.

\begin{hyp} \label{automorphism chopper} Suppose that the curve $C$
  has trivial automorphism group, and that both $r$ and $d$ are odd.
\end{hyp}
The purpose of Hypothesis \ref{automorphism chopper} is to ensure that
the automorphism group of $M _ C (r, L)$ is given entirely by the
natural image of $\Pic _ C [r]$ in $\aut (M)$ under the map sending an
invertible sheaf $N \in \Pic _ C [r]$ to the automorphism given by
twisting by $N$ (see \cite{kouvidakis_automorphism_1995}).

Given Hypothesis \ref{automorphism chopper}, the torsor $T$
corresponds to a class
$$
\widebar {\alpha} \in \H ^ 1 (\Pic ^ d _ {C / k}, \R ^ 1 {\pr _ 2} _ {*} \m _ r),
$$
where $\pr _ 2$ is the second projection of the product $C \times \Pic
^ d _ {C / k}$.  The cohomology class thus arising belongs to a
convergent in the Leray spectral sequence for $\m _ r$ with respect to
the morphism $\pr _ 2: C \times \Pic ^ d _ {C / k} \to \Pic ^ d _ {C /
  k}$. In fact, the Leray spectral sequence gives a surjection
$$
\H ^ 2 (C \times \Pic ^ d _ {C / k}, \m _ r) \to \H ^ 1 (\Pic ^ d _ {C
  / k}, \R ^ 1 {\pr _ 2} _ {*} \m _ r).
$$
(This is surjective because the edge map $\H ^ 3 (\Pic ^ d _ {C / k},
\m _ r) \to \H ^ 3 (C \times \Pic ^ d _ {C / k}, \m _ r)$ is split by any
$k$-point $p \in C$.) There is a Chern class map
$$
\Pic (C \times \Pic ^ d _ {C / k}) \to \H ^ 2 (C \times \Pic ^ d _ {C / k}, \m _ r)
$$
arising in the usual way from the Kummer sequence.

Since $C$ has a $k$-point, there is a tautological sheaf $\ms{L}$ on
$C \times \Pic ^ d _ {C / k}$.

\begin{claim} \label{class identification} The class $\widebar
  {\alpha}$ is the image of $\ms{L} \tensor L ^{-1}$ under the
  composition of the above maps, where $L$ is the invertible sheaf
  corresponding to the base point parameterizing $M$.
\end{claim}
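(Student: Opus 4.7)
The plan is to identify the torsor $T$ with the pushforward along $\pr_2$ of the $\m_r$-gerbe of $r$th roots of $\ms L \tensor L^{-1}$ on $C \times \Pic^d_{C/k}$, after which the claim reduces to the standard compatibility of Kummer theory with the Leray spectral sequence.

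The first step is to promote the calculation of Example \ref{determinant example} to a relative statement over $\Pic^d_{C/k}$. For any $Y \to \Pic^d_{C/k}$, the line bundle $\ms L_Y \tensor L^{-1}$ on $C \times Y$ is fiberwise of degree zero. Each line bundle $\ms N$ on $C \times Y$ equipped with an isomorphism $\phi \colon \ms N^{\tensor r} \simto \ms L_Y \tensor L^{-1}$ defines a $Y$-isomorphism $M(r,d) \times_{\Pic^d_{C/k}} Y \simto M \times Y$ via $E \mapsto E \tensor \ms N$. Two choices of $(\ms N, \phi)$ producing the same $Y$-isomorphism differ either by tensoring $\ms N$ with a line bundle pulled back from $Y$ — which, I would check, leaves the induced moduli morphism unchanged — or by tensoring with an element of $\Pic_C[r]$, which is exactly the $\aut(M)$-action that defines the torsor structure on $T$. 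This identifies $T$ canonically with $\pr_{2*}\ms G$, where $\ms G := \sqrt[r]{\ms L \tensor L^{-1}}$ is the $\m_r$-gerbe of $r$th roots of $\ms L \tensor L^{-1}$ on $C \times \Pic^d_{C/k}$.

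With the geometric identification $T \cong \pr_{2*}\ms G$ in hand, the rest is formal. By construction of the root gerbe, $[\ms G] \in \H^2(C \times \Pic^d_{C/k}, \m_r)$ equals the Kummer Chern class of $\ms L \tensor L^{-1} \in \Pic(C \times \Pic^d_{C/k})$. Since $(\ms L \tensor L^{-1})|_{C \times \{N\}}$ has degree $0$ for every $N$ and $\Pic^0(C)$ is $r$-divisible, the fiberwise restriction of $[\ms G]$ to $\H^2(C, \m_r)$ vanishes, so $[\ms G]$ lies in the filtration piece to which the edge map $\H^2(C \times \Pic^d_{C/k}, \m_r) \to \H^1(\Pic^d_{C/k}, \R^1 {\pr_2}_* \m_r)$ of the excerpt applies. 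The general compatibility of pushforward of $\m_r$-gerbes with this edge map then yields $\widebar\alpha = [T] = [\pr_{2*}\ms G] = \text{(edge map)}(c_1(\ms L \tensor L^{-1}))$, which is the content of the claim.

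The principal obstacle is the first step: verifying that the $\pr_2^* \Pic(Y)$-ambiguity of an $r$th root is invisible at the level of relative moduli (tensoring a family by a bundle pulled back from the base leaves the moduli morphism untouched), and that the residual $\Pic_C[r]$-action on the set of $r$th roots agrees with the $\aut(M)$-action used to define $T$ as a torsor. The matching of actions is precisely where Hypothesis \ref{automorphism chopper} enters, through the identification $\aut(M) = \Pic_C[r]$. Once this bookkeeping is secured, the cohomological step amounts to a routine diagram chase of the Kummer sequence through the Leray filtration.
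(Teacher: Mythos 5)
Your proposal is correct and follows essentially the same route as the paper: identify $T$ with the sheafification of ${\pr_2}_*$ of the root gerbe $[\ms L\tensor L^{-1}]^{1/r}$ via the twisting action (the paper packages your object-level bookkeeping as a tensor-product morphism of stacks followed by sheafification and adjunction, with the $\Pic_C[r]$-equivariance playing exactly the role you assign to Hypothesis \ref{automorphism chopper}), and then invoke the gerbe-pushforward interpretation of the Leray edge map from Claim \ref{gerbe torsor mama}. The only quibble is a direction slip --- with $T=\isom(M(r,d),M\times\Pic^d_{C/k})$ as defined, your map should twist by $\ms N^{-1}$ rather than $\ms N$ --- but the paper's own sketch is equally cavalier about which of $T$ or its inverse torsor it produces.
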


To prove this claim, we give a geometric interpretation of the maps
\begin{equation} \label{first map} \Pic (C \times \Pic ^ d _ {C / k})
  \to \H ^ 2 (C \times \Pic ^ d _ {C / k}, \m _ r)
\end{equation}
and
\begin{equation} \label{second map} \H ^ 2 (C \times \Pic ^ d _ {C /
    k}, \m _ r) \to \H ^ 1 (\Pic ^ d _ {C / k}, \Pic _ {C / k} [r]).
\end{equation}

To interpret (\ref{first map}): given an invertible sheaf $\ms{N}$ on
$C \times \Pic ^ d / {C / k}$, we get a $\m _ r$-gerbe $[\ms{N}] ^ {1
  / r}$ over $X$. This is a stack, i.e., a moduli problem, which in
this case is very explicit. Given an $X$-scheme $T \to X$, an object
of $[\ms{N}] ^ {1 / r}$ is given by a pair $(\Lambda, \psi)$ with
$\Lambda$ an invertible sheaf on $T$ and $\psi: \Lambda ^ {\tensor r}
\to \ms{N}$ an isomorphism.

To interpret (\ref{second map}): given a $\m _ r$-gerbe $\ms{X} \to
X$, we can consider the pushforward stack $\pr _ 2 \ms{X} \to \Pic ^ d
_ {C / k}$.

\begin{claim} \label{gerbe torsor mama} With the above notation, given
  a morphism $f: X \to Y$ the stack $f _ {*} \ms{X}$ is a $f _ {*} \m
  _ r$-gerbe over a $\R ^ 1 {f} _ {*} \m _ r$-pseudotorsor. If $\ms{X}
  \times _ Y U$ is a neutral gerbe over $X \times _ Y U$ for some
  \'etale surjection $U \to Y$, sheafification of $f _ {*} \ms{X}$ is
  an \'etale torsor.
\end{claim}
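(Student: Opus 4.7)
The plan is to unwind the definition of $f _ {*} \ms X$ and propagate the $\m _ r$-gerbe structure on $\ms X$ through the pushforward in two stages. Set $\ms X _ T := \ms X \times _ Y T$ and $X _ T := X \times _ Y T$; by definition an object of $f _ {*} \ms X$ over $T \to Y$ is a section $\sigma \colon X _ T \to \ms X _ T$ of the structure morphism, and a morphism is an isomorphism of such sections. First I would compute automorphism sheaves: because $\ms X \to X$ is a $\m _ r$-gerbe, the automorphism sheaf of any such section on $X _ T$ is canonically $\m _ {r, X _ T}$, and pushing down along $f _ T$ produces $(f _ T) _ * \m _ {r, X _ T}$, which is the restriction of $f _ {*} \m _ r$ to the small site of $T$. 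Hence every object of $f _ {*} \ms X$ has automorphism sheaf $f _ {*} \m _ r$, which is the band required if $f _ {*} \ms X$ is to be a gerbe banded by $f _ {*} \m _ r$ over some base.

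Second, I would identify that base. Let $\ms Y$ be the sheafification of the presheaf $T \mapsto \pi _ 0 ((f _ {*} \ms X)(T))$ of isomorphism classes of objects. Given two sections $\sigma, \tau$ over $T$, the sheaf $\underline {\isom} _ {\ms X _ T}(\sigma, \tau)$ is a $\m _ r$-torsor on $X _ T$; its image under the edge map of the Leray spectral sequence for $f _ T$ and $\m _ r$ is a well-defined element $\delta(\sigma, \tau) \in (\R ^ 1 f _ {*} \m _ r)(T)$, depending only on the isomorphism classes of $\sigma$ and $\tau$ and satisfying $\delta(\sigma, \tau) + \delta(\tau, \rho) = \delta(\sigma, \rho)$. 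This endows $\ms Y$ with the structure of an $\R ^ 1 f _ {*} \m _ r$-pseudotorsor. When $\ms Y$ admits a local section, lifted to some $\sigma$ in $f _ {*} \ms X$, any other lift differs from $\sigma$ by a $\m _ r$-torsor on $X _ T$ whose class in $\R ^ 1 f _ {*} \m _ r$ vanishes, hence comes from a trivialization; the trivializations form an $f _ {*} \m _ r$-torsor on $T$, and combined with the automorphism computation this exhibits the fiber of $f _ {*} \ms X \to \ms Y$ as an $f _ {*} \m _ r$-gerbe. The organizing tool is the low-degree exact sequence
\[
0 \to \H ^ 1(T, f _ {*} \m _ r) \to \H ^ 1(X _ T, \m _ r) \to \H ^ 0(T, \R ^ 1 f _ {*} \m _ r),
\]
in which the first term accounts for the $f _ {*} \m _ r$-gerbe automorphisms, the middle for torsor classes on $X _ T$, and the last for the pseudotorsor difference $\delta$.

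For the final sentence, a section of $\ms X \times _ Y U \to X \times _ Y U$ is by definition an object of $f _ {*} \ms X$ over $U$; its image in $\ms Y$ is a section over the étale cover $U \to Y$, which is enough to promote the $\R ^ 1 f _ {*} \m _ r$-pseudotorsor $\ms Y$ to an étale $\R ^ 1 f _ {*} \m _ r$-torsor. The main technical obstacle is the functoriality check in the second paragraph: one must verify that $\delta$ is not merely a pointwise invariant but varies coherently in $T$ and is compatible with the $f _ {*} \m _ r$-gerbe structure through the low-degree sequence above. The remaining content is a routine translation between the defining groupoid of $f _ {*} \ms X$ and the language of gerbes, torsors, and pushforward stacks.
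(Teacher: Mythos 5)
Your argument is correct and follows the same route as the paper's sketch: identify objects of $f_*\ms X$ over $T$ with objects of $\ms X$ over $X_T$, read off the band $f_*\m_r$ from automorphism sheaves, put an $\R^1f_*\m_r$-pseudotorsor structure on the sheafification, and use the neutrality hypothesis over $U$ to supply a local section. The only cosmetic difference is that the paper phrases the pseudotorsor structure via the twisting action $\R^1f_*\m_r\times\Theta\to\Theta$, while you phrase it via the difference class $\delta(\sigma,\tau)$ of the Isom-torsor; these are equivalent, and your spelled-out low-degree Leray sequence is exactly the check the paper leaves implicit.
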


\begin{proof} [Sketch of proof]
  By definition, an object of $f _ {*} \ms{X}$ over a $Y$-scheme $S
  \to Y$ is an object of $\ms{X}$ over $S \times _ Y X$. It
  immediately follows that $f _ {*} \ms{X}$ is a gerbe banded by $f _
  {*} \m _ r$. Let the sheafification of $f _ {*} \ms{X}$ be denoted
  $\Theta \to Y$. Twisting by torsors gives an action $\R ^ 1f _ {*}
  \m _ r \times \Theta \to \Theta$. One checks that this makes
  $\Theta$ into a pseudotorsor; the hypothesis of the last sentence of
  the claim transparently makes $\Theta$ have local sections in the
  \'etale topology, and therefore a torsor.
\end{proof}

Sending $\ms{X}$ to the sheafification of $\pr _ 2 \ms{X}$ gives an interpretation of (\ref{second map}).

\begin{proof} [Proof of Claim \ref{class identification}]
  Let $\ms{X} \to X$ be the $\m _ r$-gerbe $[\ms{L} \tensor L ^{-1}] ^
  {1 / r}$ defined above, and let $\Theta$ denote the sheafification
  of $\ms{X}$ (as a $\Pic ^ d _ {C / k}$-stack). Tensor product
  defines a morphism of $\Pic ^ d _ {C / k}$-stacks
$$
{\pr _ 2} _ {*} \ms{X} \times \ms{M} (r, L) _ {\Pic ^ d _ {C / k}} \to
\ms{M} (r, d),
$$
where $\ms{M}$ is used in place of $M$ to denote the stack instead of
the coarse moduli space. Passing to sheafifications yields a map
$$
\Theta \times M (r, L) \to M (r, d)
$$
which is compatible with the natural $\R ^ 1 {\pr _ 2} _ {*} \m _ r$-actions. By adjunction, this gives a $\Pic (C) [r]$-equivariant map
$$
\Theta \to \isom (M (r, L), M (r, d)),
$$
yielding the desired result.
\end{proof}

This analysis of Example \ref{determinant example} fits into a
general picture. Let $f: C \to S$ be a proper smooth relative curve of
genus $g \geq 2$. Suppose $\ms{C} \to C$ is a $\m _ r$-gerbe whose
associated cohomology class $[\ms{C}] \in \H ^ 2 (C, \m _ r)$ is
trivial on all geometric fibers of $f$.

\begin{prop} \label{forms} There is an \'etale surjection $U \to S$
  and an isomorphism of stacks $\tau: \ms{M} _ {\ms{C} _ U / U} (r, L)
  \simto \ms{M} _ {C _ U / U} (r, L)$.
\end{prop}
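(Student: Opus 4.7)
The plan is to reduce, by étale base change on $S$, to the case where $\ms{C}_U \to C_U$ is a trivial $\m_r$-gerbe, and then to construct $\tau$ via an explicit untwisting functor using a chosen trivialization.

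To carry out the reduction I would analyze $[\ms{C}] \in \H^2(C, \m_r)$ by means of the Leray spectral sequence $\H^p(S, R^q f_* \m_r) \Rightarrow \H^{p+q}(C, \m_r)$ for $f$. The hypothesis that $[\ms{C}]|_{C_{\bar s}} = 0$ for every geometric point of $S$, combined with proper base change (which identifies the stalks of $R^2 f_* \m_r$ with $\H^2(C_{\bar s}, \m_r)$), forces the edge-map image of $[\ms{C}]$ in $\H^0(S, R^2 f_* \m_r)$ to vanish. Consequently $[\ms{C}]$ lies in the first level of the Leray filtration, and its induced class in the $E_\infty^{1,1}$-subquotient of $\H^1(S, R^1 f_* \m_r) = \H^1(S, \Pic_{C/S}[r])$ is precisely the class of the $\Pic_{C/S}[r]$-torsor obtained by sheafifying $f_* \ms{C}$, in direct analogy with Claim \ref{gerbe torsor mama}. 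Because $r$ is invertible on $S$ the group scheme $\Pic_{C/S}[r]$ is étale, so this torsor is split by an étale cover $U_1 \to S$. After pulling back, $[\ms{C}_{U_1}]$ lies in the image of $\H^2(U_1, \m_r) \to \H^2(C_{U_1}, \m_r)$; any such class is represented by a $\m_r$-gerbe on $U_1$ and hence dies on a further étale refinement $U \to U_1$. On $U$ we have $[\ms{C}_U] = 0$, so $\ms{C}_U \to C_U$ is a trivial gerbe.

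For the second half, I would fix a trivialization $\ms{C}_U \simeq C_U \times \B\m_r$ and pull back the tautological weight-one line bundle on $\B\m_r$ to get an invertible sheaf $\chi$ on $\ms{C}_U$ with $\chi^{\otimes r}$ canonically isomorphic to $\ms{O}_{\ms{C}_U}$. The functor $E \mapsto E \otimes \chi^{\vee}$ is an equivalence from $\ms{C}_U$-twisted quasi-coherent sheaves to quasi-coherent sheaves on $C_U$; it visibly preserves rank and stability, and for a rank-$r$ object $E$ with $\det E = L$ one computes $\det(E \otimes \chi^{\vee}) = \det E \otimes \chi^{-r} = L$, so the determinant is also preserved on the nose. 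Applied in families over $U$-schemes, this equivalence yields the desired isomorphism of stacks $\tau: \ms{M}_{\ms{C}_U / U}(r, L) \simto \ms{M}_{C_U / U}(r, L)$.

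The main obstacle is the cohomological reduction. One has to verify both the triviality of the $\Pic_{C/S}[r]$-torsor on an étale cover (essentially Claim \ref{gerbe torsor mama}, using étaleness of $\Pic_{C/S}[r]$ for $r$ invertible on $S$) and that the residual $\H^2(U_1, \m_r)$-class can be killed by a further étale cover. An equivalent alternative avoiding the second cover is to use the section of the torsor to produce, on $U_1$, an invertible $\ms{C}_{U_1}$-twisted sheaf $\Lambda$ and then absorb the discrepancy $\Lambda^{\otimes r} \otimes L^{-1}$ via the determinant-shift argument of Example \ref{determinant example}; this merely repackages the same cohomological content.
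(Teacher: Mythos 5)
Your argument is correct and follows essentially the same strategy as the paper: first pass to an \'etale cover $U\to S$ on which $[\ms C]$ vanishes, then untwist by an invertible $\ms C_U$-twisted sheaf $\chi$ with $\chi^{\otimes r}\cong\ms O$, noting that tensoring a rank-$r$ sheaf by $\chi^{\vee}$ preserves stability and the determinant. The only difference is in the first step, where the paper kills $[\ms C]$ \'etale-locally in one stroke using proper and smooth base change over strict henselizations together with the compatibility of \'etale cohomology with limits, whereas you descend through the Leray filtration one graded piece at a time; both routes are valid and rest on the same hypothesis of triviality on geometric fibers.
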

In other words, the stack of stable $\ms{C}$-twisted sheaves of rank
$r$ determinant $L$ is a form of the stack of stable sheaves on $C$ of
rank $r$ and determinant $L$. (As the reader will note from the proof,
it is essential that the cohomology class be trivial on geometric
fibers for this to be true.)

\begin{proof} [Sketch of proof]
  The proper and smooth base change theorems and the compatibility of
  the formation of \'etale cohomology with limits show that there is
  an \'etale surjection $U \to S$ such that $[\ms{C}] _ U = 0$. Thus,
  it suffices to show that if $[\ms{C}] = 0$ (in other words, if
  $\ms{C} \cong \B \m _ {r, C}$) then there is an isomorphism $\ms{M}
  _ {\ms{C} / S} (r, L) \simto \ms{M} _ {C / S} (r, L)$. Under this
  assumption, there is an invertible $\ms{C}$-sheaf $\chi$ such that
  $\chi ^ {\tensor r}$ is isomorphic to $\ms{O}$. Tensoring with $\chi
  ^{-1}$ gives the isomorphism in question.
\end{proof}

The isomorphism $\tau$ of Proposition \ref{forms} yields a coarse
isomorphism $\widebar {\tau}: M _ {\ms{C} _ U / U} (r, L) \simto M _
{C _ U / U} (r, L)$, so that $M _ {\ms{C} / S} (r, L)$ is a form of $M
_ {C / S} (r, L)$. Just as above, we can give the cohomology class
corresponding to this form (by descent theory): it is precisely the
image of $[\ms{C}]$ under the edge map $\epsilon: \H ^ 2 (C, \m _ r)
\to \H ^ 1 (S, \R ^ 1f _ {*} \m _ r)$ in the Leray spectral
sequence. There is one mildly interesting consequence of this
fact. Since $\epsilon$ has (in general) a kernel, we see that the
coarse moduli space $M _ {\ms{C} / S} (r, L)$ does not (in general)
characterize the ``curve'' $\ms{C}$. In other words, it appears that
there is no ``stacky Torelli theorem''. It is perhaps illuminating to
give an example of the failure.

\begin{lem}\label{coho pullback}
  If $f:X\to S$ is a proper morphism with geometrically connected
  fibers such that $\Pic_{X/S}=\Z$, then the natural pullback map
  $\H^2(S,\m_r)\to\H^2(X,\m_r)$ is injective.
\end{lem}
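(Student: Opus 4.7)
The plan is to compute the low-degree terms of the Leray spectral sequence
$$E_2^{p,q} = \H^p(S, R^q f_* \m_r) \Longrightarrow \H^{p+q}(X, \m_r),$$
since the pullback map in question is the edge map $\H^2(S,f_*\m_r)\to\H^2(X,\m_r)$ once one identifies $f_*\m_r$ with $\m_r$. The five-term exact sequence reads
$$0\to \H^1(S,f_*\m_r)\to\H^1(X,\m_r)\to \H^0(S,R^1f_*\m_r)\to \H^2(S,f_*\m_r)\to\H^2(X,\m_r),$$
so it will suffice to show that $f_*\m_r=\m_r$ and $R^1f_*\m_r=0$.

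The first identification comes from applying $Rf_*$ to the Kummer sequence $1\to\m_r\to\G_m\xrightarrow{r}\G_m\to 1$ on $X$ and using that for $f$ proper with geometrically connected fibers one has $f_*\G_m=\G_m$; then $f_*\m_r=\ker(r\colon\G_m\to\G_m)=\m_r$ on $S$. Continuing the derived pushforward of the Kummer sequence, one obtains
$$\G_m\xrightarrow{r}\G_m\to R^1f_*\m_r\to R^1f_*\G_m\xrightarrow{r}R^1f_*\G_m,$$
in which the left map is surjective as an \'etale sheaf map (\'etale-locally every section of $\G_m$ is an $r$th power), so $R^1f_*\m_r$ embeds into the kernel of multiplication by $r$ on $R^1f_*\G_m$.

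Now, $R^1f_*\G_m$ is by definition the \'etale sheafification of $U\mapsto\Pic(X_U)$, which under our hypotheses agrees with the relative Picard sheaf $\Pic_{X/S}$: indeed, the Leray low-degree exact sequence for $\G_m$ gives $\Pic(U)\to\Pic(X_U)\to\Pic_{X/S}(U)$, and $U\mapsto\Pic(U)$ sheafifies to zero on $S_{\etale}$ because strictly henselian local rings have trivial Picard group. By hypothesis $\Pic_{X/S}=\Z$, and multiplication by $r$ is injective on $\Z$, so $R^1f_*\m_r=0$.

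Plugging these into the five-term exact sequence gives the desired injectivity. The main thing to be careful about is the identification $R^1f_*\G_m=\Pic_{X/S}$ as \'etale sheaves; apart from that step, everything is a formal consequence of the Kummer sequence and the Leray spectral sequence.
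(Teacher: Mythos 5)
Your argument is correct and is essentially the paper's own proof: the paper likewise reads off the kernel of the edge map $\H^2(S,\m_r)\to\H^2(X,\m_r)$ from the low-degree terms of the Leray spectral sequence and kills it by identifying $\R^1f_*\m_r$ with $\Pic_{X/S}[r]=\Z[r]=0$; you have merely spelled out the Kummer-sequence identification that the paper leaves implicit. (One cosmetic caveat: under the stated hypotheses alone, $f_*\G_m=\G_m$ also requires the geometric fibers to be reduced, e.g.\ via $f_*\O_X=\O_S$; the identity $f_*\m_r=\m_r$ that you actually need follows more robustly from proper base change together with the connectedness of the geometric fibers.)
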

\begin{proof}
  The Leray spectral sequence shows that the kernel of the map is the
  image of $\H^0(S,\R^1f_\ast\m_r)=\H^1(X,\Pic_{X/S}[r])=0$.
\end{proof}

\begin{example} \label{the pullback case} If $\ms{C} = C \times
  \ms{S}$ for a $\m _ r$-gerbe $\ms{S} \to S$, then (for example, by
  the above Leray spectral sequence calculation) there is an
  isomorphism $b: M _ {\ms{C} / S} (r, L) \simto M _ {C / S} (r,
  L)$. However, the stacks $\ms{M} _ {\ms{C} / S} (r, L)$ and $\ms{M} _
  {C / S} (r, L)$ are not isomorphic unless $\ms{S}$ is isomorphic to $\B \m _
  {r, S}$. In fact, viewing (via $b$) both of these stacks as $\m _
  r$-gerbes over $M _ {C / S} (r, L)$, we have an equation
$$
[\ms{M} _ {\ms{C} / S} (r, L)] - [\ms{M} _ {C / S} (r, L)] = [\ms{S} _
{M _ {C / S} (r, L)}]
$$
as described in \cite{krashen_index_2008}. By Lemma \ref{coho
  pullback}, we see that $[\ms S]=0$ if and only if $[\ms
S_{M_{C/S}(r,L)}]=0$, as desired.
\end{example}

On the other hand, if we keep track of the stacky structure, we have
the following silly ``Torelli'' theorem.  Let $f: C \to S$ be a proper
smooth relative curve of genus $g \geq 2$ with a section whose
geometric generic fiber has no nontrivial automorphisms. Suppose $L$
is an invertible sheaf on $C$ of degree $d$ on each geometric fiber of
$f$ and $r$ is a positive integer relatively prime to $d$ such that
$rd$ is odd.  Given stacks $\ms X$ and $\ms Y$ with inclusions
$\m_r\inj\ms I(\ms X)$ and $\m_r\inj\ms I(\ms Y)$, the notation $\ms
X\cong_r\ms Y$ will mean that there is an isomorphism $\iota:\ms
X\simto\ms Y$ such that the composition
$\iota^\ast\m_r\simto\m_r\to\ms I(\ms X)\to\iota^\ast\ms I(\ms Y)$ is
the pullback under $\iota$ of the given inclusion $\m_r\to\ms I(\ms
Y)$.  We will call such an isomorphism ``$\m_r$-linear''.

\begin{thm} [Isotrivial Torelli] \label{isotrivial Torelli}

  With the above notation, if $\ms{C} _ 1$ and $\ms{C} _ 2$ are $\m _
  r$-gerbes on $C$ whose restrictions to geometric fibers of $f$ are
  trivial, then $\ms{C} _ 1 \cong _ r \ms{C} _ 2$ if and only if
  $\ms{M} _ {\ms{C} _ 1 / S} (r, L) \cong _ r \ms{M} _ {\ms{C} _ 2 /
    S} (r, L)$.
\end{thm}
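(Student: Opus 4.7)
The forward direction is immediate: a $\m_r$-linear isomorphism $\phi:\ms C_1\simto\ms C_2$ of gerbes over $C$ induces, by pushforward of twisted sheaves, an equivalence of moduli stacks that preserves rank, determinant, stability, and the $\m_r$-banding on inertia.

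For the converse, the plan is to reconstruct the class $[\ms C_i]\in\H^2(C,\m_r)$ from the moduli stack via its universal family. Under Hypothesis \ref{automorphism chopper}, which holds since $rd$ is odd and $(r,d)=1$, every stable $\ms C_i$-twisted sheaf of rank $r$ with fixed determinant $L$ has automorphism group exactly $\m_r$, so $\ms M_i:=\ms M_{\ms C_i/S}(r,L)$ is a $\m_r$-gerbe over its coarse space $M_i:=M_{\ms C_i/S}(r,L)$. Because $(r,\deg L)=1$, there is a universal family $\ms E_i$ on $\ms C_i\times_S\ms M_i$ of weight $(1,1)$ for the $\m_r\times\m_r$-inertia, or equivalently a twisted sheaf on $C\times_S M_i$ with respect to the $\m_r$-gerbe $G_i=\pi_C^\ast\ms C_i\otimes\pi_{M_i}^\ast\ms M_i$.

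Given a $\m_r$-linear isomorphism $\iota:\ms M_1\simto\ms M_2$ covering an isomorphism $\bar\iota:M_1\simto M_2$ of coarse spaces, the equivalence of groupoids $\iota(T):\ms M_1(T)\simto\ms M_2(T)$ transports the universal $\ms C_1$-twisted family on $\ms M_1$ to a universal $\ms C_2$-twisted family on $\ms M_2$, which by uniqueness of universal families (a consequence of stability together with $(r,\deg L)=1$) must agree with $\ms E_2$ up to a line-bundle twist pulled back from $M_2$. Comparing the gerbe twists of both families on $C\times_S M_2$, and using the $\m_r$-linearity of $\iota$ to identify the moduli-direction factors $\bar\iota^\ast\ms M_2\cong\ms M_1$ as banded gerbes, we obtain $\pi_C^\ast[\ms C_1]=\pi_C^\ast[\ms C_2]$ in $\H^2(C\times_S M_2,\m_r)$. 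By Lemma \ref{coho pullback} applied to $\pi_C$ --- whose geometric fibers $M_{2,\bar s}$ are smooth projective with $\Pic=\Z$ by the classical theory of moduli of stable bundles on curves with coprime rank and degree --- the pullback is injective on $\H^2(-,\m_r)$, so $[\ms C_1]=[\ms C_2]$. Chasing the $\m_r$-bandings through the identification yields the desired $\m_r$-linear isomorphism $\ms C_1\cong_r\ms C_2$.

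The main technical obstacle is the bookkeeping of twists in the third paragraph: constructing the universal family with the claimed bi-twisted structure, justifying the Morita-style transport of $\ms E_1$ across $\iota$, and verifying that $\m_r$-linearity of $\iota$ is exactly the hypothesis needed for the moduli-direction twists to cancel as banded $\m_r$-gerbes rather than only up to an element of $\aut(\m_r)$.
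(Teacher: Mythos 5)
The forward direction is fine. The converse, however, has a genuine gap at its central step. You claim that a $\m_r$-linear isomorphism $\iota:\ms M_{\ms C_1/S}(r,L)\simto\ms M_{\ms C_2/S}(r,L)$ ``transports the universal $\ms C_1$-twisted family on $\ms M_1$ to a universal $\ms C_2$-twisted family on $\ms M_2$,'' and then invoke uniqueness of universal families to compare with $\ms E_2$. But an abstract isomorphism of stacks does not convert $\ms C_1$-twisted sheaves into $\ms C_2$-twisted sheaves: pulling $\ms E_1$ back along $\id\times\iota^{-1}$ produces a family of stable \emph{$\ms C_1$-twisted} sheaves parameterized by $\ms M_2$ (classified by $\iota^{-1}$ itself), which lives on the gerbe of class $\pi_C^\ast[\ms C_1]+\pi_{M_2}^\ast[\ms M_2]$, while $\ms E_2$ lives on the gerbe of class $\pi_C^\ast[\ms C_2]+\pi_{M_2}^\ast[\ms M_2]$. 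These are families of objects of two a priori different categories, so ``uniqueness of universal families'' cannot be applied; any identification of the two families up to a twist from $M_2$ presupposes exactly the comparison $[\ms C_1]=[\ms C_2]$ you are trying to prove. (Forming $\ms E_1'^\vee\tensor\ms E_2$ only shows that $\pi_C^\ast([\ms C_2]-[\ms C_1])$ has index dividing $r^2$, which is far from vanishing.) A further symptom: your only use of Hypothesis \ref{automorphism chopper} is to get that stable sheaves have automorphism group $\m_r$, which is automatic once the determinant is fixed; the hypothesis is actually needed to control automorphisms of the \emph{coarse space} $M$ (via Kouvidakis--Pantev), and an argument that never uses it there should be viewed with suspicion.

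The paper's route is different and avoids this trap. It first uses the section of $f$ to split $\H^2(C,\m_r)$ and identifies the image of $[\ms C_i]$ in $\H^1(S,\R^1f_\ast\m_r)$ with the class of the coarse form $M_{\ms C_i/S}(r,L)$; the coarse isomorphism $\widebar\phi$ then forces $[\ms C_1]-[\ms C_2]=[\ms S]|_C$ for a $\m_r$-gerbe $\ms S\to S$. Writing $\ms C_1=\ms C_2\wedge\ms S_C$ yields a canonical isomorphism $b$ of coarse spaces with $[\ms M_{\ms C_1/S}(r,L)]-b^\ast[\ms M_{\ms C_2/S}(r,L)]=[\ms S]_{M}$; Hypothesis \ref{automorphism chopper} lets one lift $b\circ\widebar\phi^{-1}$ to the stack, killing $[\ms S]_M$, and Lemma \ref{coho pullback} applied to $M\to S$ (not to $C\times_S M\to C$, as you propose) gives $[\ms S]=0$. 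If you want to salvage a universal-family argument, you would need to first establish the relation $[\ms C_1]-[\ms C_2]\in\im\bigl(\H^2(S,\m_r)\to\H^2(C,\m_r)\bigr)$ by some independent means, at which point you are essentially reproducing the paper's proof.
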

\begin{proof}

  The assumption that $f$ has a section leads (via pullback and the
  relative cohomology class of the section) to a natural splitting
$$
\H ^ 2 (C, \m _ r) = \H ^ 2 (S, \m _ r) \oplus \H ^ 1 (S, \R ^ 1f _
{*} \m _ r) \oplus \H ^ 0 (S, \R ^ 2f _ {*} \m _ r)
$$
such that the first two summands correspond to classes which are
trivial on geometric fibers of $f$. As discussed above, the image of
$[\ms{C} _ i]$ in $\H ^ 1 (S, \R ^ 1f _ {*} \m _ r)$ is the class
associated to $M _ {\ms{C} _ i / S} (r, L)$. If $\ms{M} _ {\ms{C} _ 1
  / S} (r, L)$ is isomorphic to $\ms{M} _ {\ms{C} _ 2 / S} (r, L)$
(say, by an isomorphism $\phi$) then certainly the same is true for
the coarse moduli spaces (isomorphic via $\widebar {\phi}$), from
which we conclude that $[\ms{C} _ 1]$ and $[\ms{C} _ 2]$ have the same
image in $\H ^ 1 (S, \R ^ 1f _ {*} \m _ r)$. Thus, there exists some
$\m _ r$-gerbe $\ms{S} \to S$ such that $[\ms{C} _ 1] - [\ms{C} _ 2] =
[\ms{S}] |_ C$ in $\H ^ 2 (C, \m _ r)$. Using Giraud's theory \cite[\S IV.2.4]{MR0344253}, it
follows that we can write $\ms{C} _ 1 = \ms{C} _ 2 \wedge \ms{S} _
C$. In this situation, there is a canonical isomorphism $b: M _
{\ms{C} _ 1 / S} (r, L) \simto M _ {\ms{C} _ 2 / S} (r, L)$ with the
property that $[\ms{M} _ {\ms{C} _ 1 / S} (r, L)] - b ^ {*} [\ms{M} _
{\ms{C} _ 2 / S} (r, L)] = [\ms{S}] _ {M _ {\ms{C} _ 1 / S} (r, L)}$
in $\H ^ 2 (M _ {\ms{C} _ 1 / S} (r, L), \m _ r)$.

By Hypothesis \ref{automorphism chopper}, any automorphism of $M _
{\ms{C} _ 1 / S} (r, L)$ lifts to a ($\m_r$-linear) automorphism of
$\ms{M} _ {\ms{C} _ 1 / S} (r, L)$. Applying this to $b \circ \widebar
{\phi} ^{-1}$, we see that $b$ lifts to an isomorphism $\ms{M} _
{\ms{C} _ 1 / S} (r, L) \simto \ms{M} _ {\ms{C} _ 2 / S} (r, L)$. We
thus conclude that $\ms{S} _ {M _ {\ms{C} _ 1 / S} (r, L)}$ is a
trivial gerbe.  By Lemma \ref{coho pullback}, we see that $[\ms S]=0$,
so $\ms C_1$ and $\ms C_2$ are isomorphic $\m_r$-gerbes.
\end{proof}

\section {Some arithmetic questions about Brauer groups and rational points on varieties over global fields} \label{questions}

Let $C/k$ be a curve over a field.  In this section we describe how
the forms arising in the preceding section are related to basic questions
about the arithmetic of function fields.  The linkage is provided by
an interpretation of the group $\H^2(C,\m_r)$.  The inclusion $\m _ r
\to \G_m$ yields classes in $\H ^ 2 (C, \G_m)$, which is equal to the
Brauer group of $C$. The reader is referred to the appendix for a
review of basic facts about the Brauer group of a scheme.

Given a field $K$, there are several questions about the arithmetic
properties of $K$ in which the Brauer group plays a central role.
\begin{enumerate}
\item The period-index problem: given $\alpha \in \Br (K)$, what is
  the minimal $g$ such that $\ind (\alpha) \mid \per (\alpha) ^ g$?
\item The index-reduction problem: given a field extension $L / K$ and
  a class $\alpha \in \Br (K)$, how can we characterize the number
  $\ind (\alpha) / \ind (\alpha |_ L)$?
\item The Brauer-Manin obstruction to the Hasse principle: is the
  Brauer-Manin obstruction the only obstruction to the existence of
  0-cycles of degree 1?
\end{enumerate}
In this paper we will focus on questions (1) and (3).  Question (2) also has close ties to the geometry of moduli spaces; we refer the reader to \cite{krashen_index_2008} for details.  

As the third question is the most technical, let us briefly review
what it means.  Suppose $K$ is a global field with adele ring $A$ and
$X$ is a proper geometrically connected $K$-scheme. For example, $X$
could be a smooth quadric hypersurface. For smooth quadric hypersurfaces, a classical theorem of Hasse
and Minkowski says that $X (K) \ne \emptyset$ if and only if $X (K _
{\nu}) \ne \emptyset$ for all places $\nu$ of $K$ (including the
infinite ones). This principle is usually referred to as the ``Hasse
principle''. A natural question which arises from this theorem is
whether or not this principle holds for an arbitrary variety. This
turns out not to the case \cite{MR0041871}, but there is often an
explanation for the failure of this principle arising from a
cohomological obstruction discovered by Manin \cite{MR0427322}. To
describe this obstruction, few minor technical remarks are in order.

Since $A$ is a $K$-algebra, we can consider the adelic points $X
(A)$. Restriction gives a map $X (A) \to \prod _ {\nu} X (K _
{\nu})$. In fact, this map is a bijection. (To prove this, one can use
a regular proper model of $X$ over the scheme of integers of $K$ to
reduce to the case in which $X$ is affine, where this follows from the
universal property of the product.) From this point of view, the Hasse
principle says that $X (A) \ne \emptyset$ if and only if $X (K) \ne
\emptyset$. Moreover, the $K$-algebra structure on $A$ gives a map $X
(K) \to X (A)$. Manin's idea is to produce a pairing whose ``kernel''
contains the image of $X (K)$ in $X (A)$. The pairing arises as
follows: restriction gives a map
$$
\Br (X) \times X (A) \to \Br (A) \to \Q / \Z,
$$
where the last map comes from the usual local invariants of class
field theory. The standard reciprocity law implies that the Brauer
group of $K$ is in the left kernel of this pairing, yielding an
invariant map
$$
 X (A) \to \Hom(\Br(X)/\Br(K),\Q / \Z).
$$
Write $X(A)^{\Br(X)}$ for the ``kernel'' of this map (i.e., the
elements sent to the map $0:\Br(X)/\Br(K)\to\Q/\Z$).  The same
reciprocity law shows that $X (K)$ is contained in $X(A)^{\Br(X)}$. In
particular, if $X (A) ^ {\Br (X)} = \emptyset$ then $X (K) =
\emptyset$.  The obvious question concerning this pairing is the
following.

\begin{ques}\label{ugh}
If $X (A) ^ {\Br (X)} \neq \emptyset$ then is $K(K)\neq\emptyset$?
\end{ques}

As suspected from the beginning, the answer turns out to be ``no,''
but there was no counterexample until Skorobogatov discovered a
bielliptic surface with no rational points and vanishing Brauer-Manin
obstruction \cite{MR1666779}. There is a refinement of this
question due to Colliot-Th\'el\`ene that is still the subject of
much current research. (Cf.\ Conjecture 1.5(a) of \cite{ct95} and Conjecture 2.4 of \cite{ct99}.)

\begin{conj} \label{French conjecture}
If $X (A) ^ {\Br (X)} \ne \emptyset$ then there is a 0-cycle of degree 1 (over $K$) on $X$.
\end{conj}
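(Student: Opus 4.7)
The statement is Colliot-Th\'el\`ene's well-known open conjecture (that the Brauer-Manin obstruction is the only obstruction to the existence of a zero-cycle of degree $1$), so any plan I can propose is necessarily speculative; what I can offer is a strategy that mirrors the partial results in the literature and, optimistically, specializes to the moduli-space setting of this paper. The overall plan is to reduce to a geometric situation rich enough for descent, and then combine arithmetic duality for that descent structure with the vanishing of the Brauer-Manin pairing to manufacture a zero-cycle of degree $1$.

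First, I would restrict attention to smooth projective $X$ admitting a useful descent theory: rationally connected varieties, or at least varieties dominated by something with transparent torsor-theoretic structure (toric, or homogeneous under a connected linear algebraic group). For such $X$ the universal-torsor machine of Colliot-Th\'el\`ene and Sansuc translates the zero-cycle problem on $X$ into a zero-cycle (or rational point) problem on a torsor $T \to X$ under a group $M$ of multiplicative type. The descent exact sequence identifies $X(A)^{\Br(X)}$ with the image of the adelic points of all such twists of $T$, so the existence of a degree-$1$ zero-cycle reduces to an arithmetic-duality statement for $M$ over $K$, to which Poitou-Tate duality applies.

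Second, to attack $X$ that does not itself carry a usable descent structure, I would proceed by the fibration method: pick a morphism $X \to \P^1$ whose geometrically nice fibers are already known to satisfy the conjecture, and try to assemble a degree-$1$ zero-cycle on $X$ from local data by locating a rational fiber on which all the local inputs cohere. The main obstacle sits here: controlling Hilbert irreducibility on the base while preserving the Brauer-Manin input on the fibers typically requires Schinzel's hypothesis or some additive-combinatorial substitute, neither of which is known unconditionally. A realistic output of such a plan is therefore (i) an unconditional proof in restricted classes (Ch\^atelet surfaces, conic bundles, and, most relevantly for this paper, smooth projective geometrically rational $K$-schemes with $\Pic = \Z$, a class which contains the moduli spaces $M_{\ms C_K/K}(r, \ms O(D_K))$ of interest via Proposition \ref{forms}), and (ii) a conditional proof in general. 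The hard part --- a uniform strategy avoiding Schinzel's hypothesis --- is exactly what remains out of reach, and explains why the statement above is advertised as a conjecture rather than a theorem.
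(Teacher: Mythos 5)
The statement you were asked about is Colliot-Th\'el\`ene's conjecture on $0$-cycles of degree $1$; the paper offers no proof of it (it is stated as Conjecture \ref{French conjecture}, attributed to \cite{ct95} and \cite{ct99}, with only Saito's case of curves cited as known), so there is nothing in the paper to compare your argument against, and you are right to present a strategy rather than a proof. Your survey of the standard lines of attack --- descent via universal torsors and Poitou--Tate duality for groups of multiplicative type, versus the fibration method with its dependence on Schinzel-type hypotheses --- is accurate as a description of the state of the art.

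One correction, though, because it matters for how this conjecture is used in the paper: you list ``smooth projective geometrically rational $K$-schemes with $\Pic = \Z$'' alongside Ch\^atelet surfaces and conic bundles as a restricted class where an unconditional proof is a realistic output. That class is \emph{not} known unconditionally; it is exactly Conjecture \ref{French conjecture Junior}, which the paper derives as a \emph{special case} of Conjecture \ref{French conjecture} (the derivation only shows $X(A) = X(A)^{\Br(X)}$ for such $X$, it does not prove the Hasse principle). Indeed, if the conjecture were known for that class, Theorem \ref{Hasse principle period-index theorem} would yield an unconditional proof that $\per(\alpha)=\ind(\alpha)$ for all $\alpha\in\Br_\infty(C)$ on arithmetic surfaces, which is open in mixed characteristic. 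So the moduli spaces $M_{\ms C_K/K}(n,\ms O(p))$ sit squarely in the open part of the conjecture, not in the part accessible by current descent or fibration techniques; that is the whole point of the equivalence the paper establishes.
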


We will call this property ``the Hasse principle for $0$-cycles''.  A
famous theorem of Saito affirms Conjecture \ref{French conjecture}
when $X$ is a curve, under the assumption that the Tate-Shafarevich
group of the curve is finite (the original is
\cite{saito_observationsmotivic_1989} with another account of this
result in \cite{ct99}); the general case is still wide open.
According to Colliot-Th\'el\`ene, it is not known if Skorobogatov's
negative answer to Question \ref{ugh} has a $0$-cycle of degree $1$.
One of our primary goals in this paper will be to link certain cases
of Conjecture \ref{French conjecture} to the period-index problem for
function fields of arithmetic surfaces.

There is one case of Conjecture \ref{French conjecture} which will
come up below.

\begin{conj} \label{French conjecture Junior}
If $X$ is smooth and geometrically rational and $\Pic (X \tensor \widebar {K})$ is isomorphic to $\Z$ then the Hasse principle holds (for $0$-cycles) for $X$.
\end{conj}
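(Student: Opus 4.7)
The statement is an open conjecture (a special case of Conjecture~\ref{French conjecture}), so what I can propose is a reduction rather than a proof: under the stated hypotheses the Brauer--Manin obstruction on $X$ vanishes for structural reasons, and so Conjecture~\ref{French conjecture Junior} is literally the restriction of Conjecture~\ref{French conjecture} to the situation in which that obstruction is automatically empty.

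First I would compute $\Br(X)/\Br(K)$. Since $X$ is smooth projective and geometrically rational, birational invariance of the Brauer group for smooth projective varieties together with $\Br(\P^n_{\widebar K}) = 0$ yields $\Br(X_{\widebar K}) = 0$, so every class in $\Br(X)$ is algebraic. The Hochschild--Serre spectral sequence for $\G_m$ on $X$ then produces an exact sequence
$$
\Br(K) \longrightarrow \Br(X) \longrightarrow \H^1(K,\Pic(X_{\widebar K})) \longrightarrow \H^3(K,\widebar K^\times).
$$
Now $\Pic(X_{\widebar K}) \cong \Z$ by hypothesis, and because the ample cone inside $\Pic(X_{\widebar K})\tensor\R \cong \R$ is a half-line that Galois must preserve, any ample class on $X$ itself pulls back to a Galois-fixed generator. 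Thus the $G_K$-action on $\Z$ is trivial, and since $G_K$ is profinite one has $\H^1(K,\Z) = \Hom_{\mathrm{cont}}(G_K,\Z) = 0$. Therefore $\Br(K) \to \Br(X)$ is surjective.

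Consequently the Brauer--Manin pairing $\Br(X)/\Br(K) \times X(A) \to \Q/\Z$ is identically zero and $X(A)^{\Br(X)} = X(A)$. Under the assumption $X(A) \ne \emptyset$, the hypothesis of Conjecture~\ref{French conjecture} is automatically satisfied, and the conclusion---a $0$-cycle of degree $1$ on $X$---is immediate \emph{from that conjecture}.

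The main obstacle, of course, is that Conjecture~\ref{French conjecture} is itself open, so this reduction does not actually prove anything new; it merely isolates Conjecture~\ref{French conjecture Junior} as the cleanest geometric case in which the cohomological side is automatic, leaving only the genuinely geometric content of producing a global $0$-cycle from adelic data. Real progress would have to come from geometric ingredients outside of Brauer-group analysis---fibration methods over $\P^1$, classification of smooth projective geometrically rational varieties of Picard number one, or case-by-case analysis of quadrics, Severi--Brauer varieties, and del Pezzo surfaces---rather than from further manipulation of the cohomological obstruction.
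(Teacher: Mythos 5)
Your reduction is exactly the one the paper gives: $\Br(X_{\widebar K})=0$ by geometric rationality, the Leray/Hochschild--Serre sequence identifies $\Br(X)/\Br(K)$ with $\H^1(K,\Z)=0$, so $X(A)^{\Br(X)}=X(A)$ and the statement is the special case of Conjecture~\ref{French conjecture} in which the Brauer--Manin condition is vacuous. Your added justification that the Galois action on $\Pic(X_{\widebar K})\cong\Z$ is trivial (via the ample generator) is a detail the paper leaves implicit, but the argument is otherwise the same.
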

\begin{proof} [Proof that Conjecture \ref{French conjecture Junior}
  follows from Conjecture \ref{French conjecture}] By
  \cite{grothendieck_le_1968-2}, we know $\Br (X \tensor \widebar {K})
  = 0$, since $X$ is smooth and geometrically rational. The Leray spectral
  sequence for $\G_m$ then shows that $\Br (X) / \Br (K) = \H ^ 1 (K,
  \Z)$, and the latter group is trivial (since the first cohomology of
  a finite group acting trivially on $\Z$ is trivial, and the Galois
  cohomology is a colimit of such).  Thus, $X(A)=X(A)^{\Br(X)}$.
\end{proof}

The statement of Conjecture \ref{French conjecture Junior} is meant to
include both the strong form (classical Hasse principle) and weak form
(Hasse principle for $0$-cycles).  We will discuss two different
relationships between Conjecture \ref{French conjecture Junior} and
the period-index problem; one will relate to the strong form, while
one will relate to the weak form.

\section {Moduli spaces of stable twisted sheaves on curves and period-index theorems} \label{classical}

In this section, we start to explain the connections among the various
problems described in the preceding section. In particular, we will
use classical theorems about rational points on various kinds of
varieties over various $C _ 1$-fields to solve moduli problems
encoding period-index problems.  Then we will prove the Theorem from
the Introduction.

To begin, we give another result which is a transparent translation of
a simple spectral sequence argument.

\begin{thm} \label{curve over finite field has trivial Brauer group}
If $C$ is a proper curve over a finite field $\F _ q$ then $\Br (C) = 0$.
\end{thm}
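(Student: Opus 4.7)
My plan is to prove the theorem by combining the Leray spectral sequence for $f\colon C\to\spec\F_q$ (with $\G_m$-coefficients) with three classical vanishing results: Wedderburn's theorem, Tsen's theorem, and Lang's theorem. The Leray spectral sequence
$$E_2^{p,q}=\H^p(\F_q,\R^q f_\ast\G_m)\Rightarrow\H^{p+q}(C,\G_m)$$
together with the identifications $\R^0f_\ast\G_m=\G_m$, $\R^1f_\ast\G_m=\Pic_{C/\F_q}$, and $\R^2f_\ast\G_m$ having stalks $\Br(\widebar C)$, produces a low-degree exact sequence of the form
$$\H^2(\F_q,\widebar\F_q^\ast)\to\ker\bigl(\H^2(C,\G_m)\to\H^0(\F_q,\R^2f_\ast\G_m)\bigr)\to\H^1(\F_q,\Pic(\widebar C))\to\H^3(\F_q,\widebar\F_q^\ast).$$
It will therefore suffice to show that the outer group on the left is zero, that the target sheaf $\R^2f_\ast\G_m$ vanishes, and that $\H^1(\F_q,\Pic(\widebar C))=0$; then $\Br(C)=\H^2(C,\G_m)$ is squeezed between two zero groups.

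First I would kill the left-hand term: $\H^2(\F_q,\widebar\F_q^\ast)=\Br(\F_q)$, and Wedderburn's theorem that every finite division algebra is a field gives $\Br(\F_q)=0$. Next I would kill $\R^2f_\ast\G_m$ by passing to geometric stalks: by proper base change the stalk at a geometric point $\widebar s\to\spec\F_q$ is $\H^2(\widebar C_{\widebar s},\G_m)=\Br(\widebar C_{\widebar s})$, which vanishes by Tsen's theorem since the function field of a curve over an algebraically closed field is $C_1$. (If $C$ is only proper and not smooth, one passes through the normalization; the induced map on Brauer groups is injective on the torsion of interest, and the normalization is smooth.) With these in place the spectral sequence gives an injection $\Br(C)\hookrightarrow\H^1(\F_q,\Pic(\widebar C))$.

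The main remaining step is to verify $\H^1(\F_q,\Pic(\widebar C))=0$, and this is where the most care is required. I would exploit the degree exact sequence
$$0\to\Pic^0(\widebar C)\to\Pic(\widebar C)\to\Z\to 0$$
of Galois modules. Taking cohomology over $G=\Gal(\widebar\F_q/\F_q)\cong\widehat\Z$ gives
$$\H^1(G,\Pic^0(\widebar C))\to\H^1(G,\Pic(\widebar C))\to\H^1(G,\Z).$$
The right-hand term is $\Hom_{\text{cont}}(\widehat\Z,\Z)=0$ because any continuous homomorphism from a profinite group to the discrete group $\Z$ has finite image and is hence trivial. The left-hand term is $\H^1(\F_q,\operatorname{Jac}(C))$, which is zero by Lang's theorem: every torsor under a connected smooth algebraic group over a finite field is trivial. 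Hence $\H^1(G,\Pic(\widebar C))=0$, completing the proof.

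The only real obstacle is the last paragraph: all the other inputs are one-line invocations of classical theorems, while the vanishing of $\H^1(\F_q,\Pic(\widebar C))$ genuinely uses Lang's theorem applied to the Jacobian together with the observation about continuous homomorphisms to $\Z$. Everything else is formal bookkeeping in the spectral sequence.
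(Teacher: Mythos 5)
Your argument is correct, but it is the classical cohomological proof, whereas the paper deliberately takes a different, moduli-theoretic route -- indeed the text introduces this theorem as ``a transparent translation of a simple spectral sequence argument,'' and your proposal is precisely the argument being translated. The paper's proof fixes a $\m_n$-gerbe $\ms C\to C$, forms the stack $\ms M_{\ms C/\F_q}(1,0)$ of invertible twisted sheaves of degree $0$, observes that it is a $\G_m$-gerbe over a torsor under $\operatorname{Jac}(C)$, applies Lang's theorem to get a rational point of the torsor and Wedderburn's theorem to lift that point to an object of the gerbe; the resulting invertible twisted sheaf trivializes $[\ms C]$. The two proofs consume the same inputs -- Lang, Wedderburn, and (implicitly, to know the torsor has geometric points) Tsen -- but package them differently: your version keeps everything as abelian cohomology and reads the vanishing off the five/seven-term exact sequence, while the paper's version converts each cohomological vanishing into a rational-point statement about a moduli space. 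The payoff of the paper's phrasing is that it is exactly the rank-$1$ prototype of the higher-rank arguments in Theorems \ref{de Jong theorem} and \ref{Hasse principle period-index theorem}, where no spectral-sequence shortcut is available. One small caution on your side: the reduction of the singular case to the smooth case via normalization, with the claim that the map on Brauer groups is ``injective on the torsion of interest,'' is not justified as stated; the clean statement is that for any proper curve over a separably closed field the torsion of $\H^2(\G_m)$ vanishes (Grothendieck, Brauer III), or one argues by deformation theory along the nilpotent/conductor square as the paper indicates. Since the paper itself only gestures at this reduction, this is a matter of polish rather than a gap.
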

\begin{proof} [Sketch of proof]
  An exercise in deformation theory (see Section \ref{defmn}) reduces
  the theorem to the case in which $C$ is smooth. Let $\ms{C} \to C$
  be a $\m _ n$-gerbe. Consider the stack $\ms{M} _ {\ms{C} / \F _ q}
  (1, 0)$ parameterizing invertible $\ms{C}$-twisted sheaves of degree
  0. Just as in the classical case, $\ms{M} _ {\ms{C} / \F _ q} (1,
  0)$ is a $\G_m$-gerbe over a $\operatorname{Jac} (C)$-torsor $T$. By
  Lang's theorem, $T$ has a rational point $p$. By Wedderburn's
  theorem, $p$ lifts to an object of the stack, giving an invertible
  $\ms{C}$-twisted sheaf. As described in the appendix, this
  invertible twisted sheaf trivializes the Brauer class associated to
  $[\ms{C}]$. Since $\ms{C}$ was an arbitrary $\m _ n$-gerbe, this 
  shows that the Brauer group of $C$ is trivial.
\end{proof}

Next, we will sketch a proof of the following theorem.

\begin{thm} [de Jong] \label{de Jong theorem} Let $X$ be a surface
  over an algebraically closed field $k$. For all $\alpha \in \Br (k
  (X))$, we have $\per (\alpha) = \ind (\alpha)$.
\end{thm}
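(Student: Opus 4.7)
The goal is to produce, for each $\alpha \in \Br(k(X))$ of period $n$, a rank-$n$ twisted sheaf for a $\m_n$-gerbe representing $\alpha$; its endomorphism algebra will then be an Azumaya algebra of degree $n$ with Brauer class $\alpha$ generically, yielding $\ind(\alpha) \mid n = \per(\alpha)$ and hence equality. Because $k$ is algebraically closed, this becomes a purely geometric question of non-emptiness of a moduli space.

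I would first arrange the geometric setup. Replacing $X$ by a smooth projective surface $X'$ with $k(X') = k(X)$ (using resolution of singularities and birational invariance of the Brauer group of a function field), and applying purity for the Brauer group on the regular surface $X'$, I may extend $\alpha$ to a class in $\Br(X')$. Via the Kummer sequence I then choose a $\m_n$-gerbe $\ms{X} \to X'$ whose associated Brauer class is $\alpha$; here I use $\H^2(k, \m_n) = 0$ (since $k$ is algebraically closed) to eliminate ambiguity in the lift.

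Next, following Proposition \ref{forms}, I consider the moduli stack $\ms{M}_{\ms{X}/k}(n, L)$ of stable $\ms{X}$-twisted sheaves of rank $n$ with some determinant $L$ and sufficiently large second Chern invariant; this stack is \'etale-locally a form of the classical Gieseker stack $\ms{M}_{X'/k}(n, L, c_2)$. If this stack is non-empty, then, since $k$ is algebraically closed, its coarse space has a $k$-point, which lifts \'etale-locally to an actual stable $\ms{X}$-twisted sheaf $V$ of rank $n$. The sheaf $\send(V)$ then descends to an Azumaya algebra of degree $n$ on $X'$ whose Brauer class is $\alpha$, proving the theorem.

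Everything thus reduces to showing that $\ms{M}_{\ms{X}/k}(n, L)$ is non-empty. Here I would mimic the geometric strategy underlying de Jong's original argument: take a cyclic $\m_n$-cover $\pi: Y \to X'$ ramified along a very ample divisor with $\alpha|_Y = 0$, so that $\pi^\ast \ms{X}$ is trivial and admits invertible twisted sheaves; pushing forward yields an $\ms{X}$-twisted sheaf on $X'$ whose rank is a multiple of $n$. One then reduces the rank to exactly $n$ by a combination of elementary transformations and deformation within the moduli stack, controlled by the classical existence and irreducibility theorems of Gieseker, Li, and O'Grady (which transport through the form relationship of Proposition \ref{forms}). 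The main obstacle is precisely this rank-reduction step: verifying that the constructions land in the stable locus, that deformation obstructions vanish or can be navigated in families, and that the predicted component of $\ms{M}_{\ms{X}/k}(n, L)$ is truly non-empty at twisted points rather than only at the split points of its \'etale-local model.
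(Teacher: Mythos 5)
Your overall framing (produce a rank-$n$ twisted sheaf, take $\send(V)$, conclude $\ind\mid\per$) matches the paper's, but the route you choose to non-emptiness — working with moduli of twisted sheaves on the surface itself — contains two gaps, and the second is fatal. First, over the algebraically closed base $k$ the stack $\ms M_{\ms X/k}(n,L,c_2)$ is \emph{not} a form of the untwisted Gieseker stack: Proposition \ref{forms} applies only when the gerbe's class is trivial on geometric fibers, and here the geometric fiber is the whole surface, on which $[\ms X]$ restricts to the nontrivial class $\alpha$. (There are no nontrivial \'etale covers of $\spec k$, so ``\'etale-locally a form'' would force an actual isomorphism, contradicting $\alpha\neq 0$.) Consequently the non-emptiness and irreducibility theorems of Gieseker, Li, and O'Grady do not transport; the geometric non-emptiness of the twisted surface moduli space is itself essentially equivalent to the theorem being proved. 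Second, your proposed mechanism for getting from the rank-$mn$ twisted sheaf (pushed forward from a splitting cover) down to rank $n$ cannot work: elementary transformations modify a sheaf along a divisor and change $c_1$ and $c_2$ but preserve rank, and rank is locally constant in flat families, so deformation within the moduli stack preserves it as well. Reducing the rank of a twisted sheaf from a multiple of the period down to the period \emph{is} the period-index problem, so this step is where the entire content lives, and no mechanism for it has been supplied.

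The paper's proof sidesteps both difficulties by changing the moduli problem: blow up so that $X\to\P^1$ is a fibration with a section and with the ramification of $\alpha$ confined to a fiber, and work with the moduli space of rank-$n$ twisted sheaves on the generic fiber $C/k(t)$, choosing the gerbe so that its class dies on $C\tensor\widebar{k(t)}$. Then Proposition \ref{forms} genuinely applies, the space is a form of the classical (rational, hence rationally connected) moduli space of stable bundles on a curve — so geometric non-emptiness is free from curve theory — and the only remaining issue is descent of a point from $\widebar{k(t)}$ to $k(t)$, which is exactly what Graber--Harris--Starr provides for the coarse space and Tsen's theorem provides for the lift to the stack. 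If you want to salvage a surface-level argument you would need an independent construction of a rank-$n$ twisted sheaf (e.g.\ de Jong's deformation of Azumaya algebras), not a rank-preserving modification of a higher-rank one.
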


Unlike Theorem \ref{curve over finite field has trivial Brauer group},
this is not merely a geometric realization of a standard cohomological
argument.  There are various proof of this result -- de Jong's
original proof \cite{MR2060023}, a proof due to de Jong and Starr
\cite{thing}, and the one we present (found with details in
\cite{lieblich_twisted_2008}). They each ultimately rest on
deformation theory and the definition of a suitable moduli problem.  The
latter two both reduce the result to the existence of a section for a
rationally connected fibration over a curve and use the
Graber-Harris-Starr theorem.

\begin{proof} [Sketch of proof]
  We assume that $\ch k = 0$ for the sake of simplicity; the reader
  will find a reduction to this case in
  \cite{lieblich_twisted_2008}. We proceed in steps. We will write $n$
  for the period of $\alpha$.

  (1) Blowing up in the base locus of a very ample pencil of divisors
  (one of which contains the ramification divisor of $\alpha$) on $X$, we
  may assume that there is a fibration $X \to \P ^ 1$ with a section
  such that the ramification of $\alpha$ is entirely contained in a
  fiber and the generic fiber is smooth of genus $g\geq 2$. Let $C / k
  (t)$ be the generic fiber; this is a proper smooth curve of genus $g
  \geq 2$ with a rational point $p$, and $\alpha$ lies in $\Br (C)$.

  (2) Choose a $\m _ n$-gerbe $\ms{C} \to C$ such that $[\ms{C}] =
  \alpha \in \Br (C)$ and $[\ms{C} \tensor \widebar {k (t)}] = 0 \in
  \H ^ 2 (C, \m _ n)$. (We can do this because $C$ has a rational point.)

  (3) Consider the modular $\m _ n$-gerbe $\ms{M} _ {\ms{C} / k
    (t)} (n, \ms{O} (p)) \to M _ {\ms{C} / k (t)} (n, \ms{O} (p))$. As
  discussed in Section \ref{Torelli section}, there is an isomorphism
$$
M _ {\ms{C} / k (t)} (n, \ms{O} (p)) \tensor \widebar {k (t)} \simto M
_ {C \tensor \widebar {k (t)} / \widebar {k (t)}} (n, \ms{O} (p)),
$$
and we know that the latter (hence, the former) is unirational (in
fact, rational), and thus rationally connected.

(4) The Graber-Harris-Starr theorem implies that there is a rational
point $q \in M _ {\ms{C} / k (t)} (n, \ms{O} (p)) (k (t))$.

(5) Tsen's theorem implies that $q$ lifts to an object $\ms{V}$: a
locally free $\ms{C}$-twisted sheaf of rank $n$.

(6) The algebra $\send (\ms{V}) |_ {\eta}$ is a central division
algebra of degree $n$ with Brauer class $\alpha$, thus proving that
the index of $\alpha$ divides $n$. Since we already know the converse
divisibility relation, we are done.
\end{proof}

\begin{rem}
  The reader will note that we use the Graber-Harris-Starr theorem in
  two places in the proof: once to find a rational point on the coarse
  moduli space, and once (in the guise of Tsen's theorem) to lift that
  rational point to an object of the stack. While classical algebraic
  geometry only ``sees'' the former, the difference between fine and
  coarse moduli problems necessitates the latter.
\end{rem}

If we start with an arithmetic surface instead of a surface over an
algebraically closed field, things get more complicated.  (For
example, it is no longer true that the period and index are always
equal if the class is allowed to ramify.)  For the rest of this
section, we will discuss unramified classes.  In Section
\ref{parabolic forms} below, we will discuss certain ramified classes.
In both cases, we will tie the period-index problem to Conjecture
\ref{French conjecture Junior}.

Let $K$ be a global field, $S$ the scheme integers of $K$, and $C \to
S$ a proper relative curve with a section and smooth generic fiber. (When
$\ch (K) > 0$, the scheme of integers is assumed to be proper over the
prime field; this ensures that it is unique.)

\begin{thm} \label{Hasse principle period-index theorem} If Conjecture
  \ref{French conjecture Junior} is true, then any $\alpha \in \Br_\infty
  (C)$ satisfies $\per (\alpha) = \ind
  (\alpha)$.
\end{thm}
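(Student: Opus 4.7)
The plan is to lift $\alpha$ to a $\m_n$-gerbe $\ms C \to C$ of the kind covered by the theorem stated in the introduction, verify that the associated moduli space satisfies the hypotheses of Conjecture \ref{French conjecture Junior}, and then read off $\per(\alpha) = \ind(\alpha)$.

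Let $n = \per(\alpha)$. The Kummer sequence gives a surjection $\H^2(C,\m_n) \twoheadrightarrow \Br(C)[n]$ with kernel $\Pic(C)/n$, so I first pick some lift $\tilde\alpha \in \H^2(C,\m_n)$. To match the setup of the introductory theorem I need $\tilde\alpha$ to be trivial on every geometric fiber of $f$ and over every infinite place. On a geometric fiber $C_{\bar s}$, Tsen gives $\Br(C_{\bar s}) = 0$ and $\H^2(C_{\bar s},\m_n) \cong \Z/n$; smooth and proper base change shows that the residue of $\tilde\alpha$ in this $\Z/n$ is locally constant in $s$, hence constant on the connected scheme $S$. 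Since $D$ has degree $1$ on each fiber, $c_1(\ms O(D))$ generates this fiberwise $\Z/n$, so subtracting a suitable integer multiple of $c_1(\ms O(D))$ from $\tilde\alpha$ simultaneously kills all the geometric-fiber restrictions. The hypothesis $\alpha \in \Br_\infty(C)$ permits an analogous adjustment over the infinite places, the local discrepancies again lying in $\Pic(C \otimes K_\nu)/n$ and being killed by the same section class. Let $\ms C \to C$ be the $\m_n$-gerbe of the resulting adjusted class and set $M := M_{\ms C_K/K}(n, \ms O(D_K))$. Proposition \ref{forms} identifies $M$ with an \'etale form of the classical space $M_{C_K/K}(n, \ms O(D_K))$, which is smooth, projective, and geometrically rational with geometric Picard group $\Z$. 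Being geometric, these properties descend to $M$, so $M$ satisfies the hypotheses of Conjecture \ref{French conjecture Junior}.

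What remains is to establish $M(\A_K) \neq \emptyset$ and then close the loop. At an infinite place $\nu$, triviality of $\ms C \otimes K_\nu$ gives $M_{K_\nu} \cong M_{C_{K_\nu}/K_\nu}(n, \ms O(D))$, a rational variety with obvious $K_\nu$-points. At a finite place $\nu$ of good reduction for $C$, the moduli space extends to a smooth proper $\ms O_\nu$-scheme whose special fiber is an \'etale form of the rational variety $M_{C_\nu/\F_{q_\nu}}(n, \ms O(D_\nu))$, hence is smooth projective and geometrically rationally connected over a finite field; Esnault's theorem produces an $\F_{q_\nu}$-rational point, which Hensel lifts to a $K_\nu$-point. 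The finitely many bad-reduction places can be dealt with by re-running the argument of Theorem \ref{de Jong theorem} directly over $K_\nu$, using the triviality of $\ms C$ on geometric fibers to obtain rationality of the relevant moduli space over the algebraic closure of $K_\nu$. Once $M(\A_K) \neq \emptyset$ is in hand, Conjecture \ref{French conjecture Junior} furnishes a $0$-cycle of degree $1$ on $M$, and the theorem stated in the introduction converts this into $\per(\alpha) = \ind(\alpha)$. The main obstacle is the uniform treatment of local points at every place of $K$, particularly at bad reduction; once that bookkeeping is secured, the rest of the argument is a formal assembly of already-established results in the paper.
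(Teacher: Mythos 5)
Your overall architecture (choose a gerbe trivial on geometric fibers and at infinity, check the hypotheses of Conjecture \ref{French conjecture Junior}, produce adelic points, invoke the conjecture) matches the paper's, but two steps fail as written. First, the bad-reduction places: you propose to ``re-run the argument of Theorem \ref{de Jong theorem} directly over $K_\nu$,'' but that argument rests on the Graber--Harris--Starr theorem, which produces rational points on rationally connected varieties over function fields of curves over an \emph{algebraically closed} field. A geometrically rational variety over a $p$-adic field need not have a rational point, so rationality of $M\otimes\widebar{K}_\nu$ buys you nothing over $K_\nu$. The paper's route, which works uniformly at every finite place (good or bad reduction), is to pass to the finite residue field $F$: the Brauer group of a proper curve over a finite field vanishes (Lang plus Wedderburn, Theorem \ref{curve over finite field has trivial Brauer group}), so the twisted problem over $F$ becomes untwisted; Esnault's theorem gives an $F$-point of the special fiber of the moduli space, which lifts to an actual stable sheaf because $F$ is finite; and smoothness of $\ms M_{\ms C/S}(n,\ms O(p))$ over $S$ then deforms that sheaf over the complete local ring to an object over $K_\nu$.

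Second, and more fundamentally, your endgame is circular and misses the fine-versus-coarse distinction the whole argument turns on. You close by citing the theorem stated in the introduction to ``convert'' a $0$-cycle of degree $1$ on $M$ into $\per(\alpha)=\ind(\alpha)$; that theorem is the statement being established, not an available tool. What is actually needed is (i) \emph{objects} of the stack $\ms M_{\ms C_K/K}(n,\ms O(p))$ over every $K_\nu$ --- not merely $K_\nu$-points of the coarse space --- because these are what force the Brauer class $\beta$ of the gerbe $\ms M\to M$ to vanish: $\beta$ is pulled back from $\Br(K)$ (since $\Pic(M\otimes\widebar K)=\Z$ and $M$ has local points everywhere), it dies at each place because a local object exists there, hence it is zero by class field theory; and (ii) once $\beta=0$, the $0$-cycle lifts to objects over finite extensions $L_1,L_2$ with $[L_1:K]-[L_2:K]=1$, whose pushforwards are $\ms C_K$-twisted sheaves of ranks differing by $n$, and semisimplicity of the category of twisted sheaves at the generic point of $C$ then yields an honest twisted sheaf of rank $n$, i.e., $\ind(\alpha)\mid\per(\alpha)$. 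Your Esnault-plus-Hensel step only produces coarse points, so even at good-reduction places you have not supplied the local objects that step (i) requires.
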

\begin{proof}
  Write $n=\per(\alpha)$.  Since $\mc C\to S$ has a section, we can
  choose a $\m_n$-gerbe $\ms C\to C$ such that $[\ms C\tensor\widebar
  K]=0\in\H^2(C\tensor\widebar K,\m_n)$.  By class field theory, the
  restriction of $\alpha$ to the point $p\in C(K)$ is trivial.

  Consider the stack $\chi:\ms M_{\ms C_K/K}(n,\ms O(p))\to M_{\ms
    C_K/K}(n,\ms O(p))$.  We claim that to prove the theorem it
  suffices to show that for every place $\nu$ of $K$, the category
  $\ms M_{\ms C_K/K}(n,\ms O(p))_{K_{\nu}}$ is nonempty.  Indeed, the
  map $\chi$ is a $\m_n$-gerbe; let $\beta\in\Br(M_{\ms C_K/K}(n,\ms
  O(p))$ be the associated Brauer class.  Since $M_{\ms C_K/K}(n,\ms
  O(p))$ is geometrically rational with Picard group $\Z$
  \cite{king_rationality_1999} and has a point over every completion
  of $K$, we know that the pullback map $\Br(K)\to\Br(M_{\ms
    C_K/K}(n,\ms O(p))$ is an isomorphism.  Thus, $\beta$ is the
  pullback of a class over $K$.  The fact that each $\ms M_{\ms
    C_K/K}(n,\ms O(p))_{K_\nu}$ is non-empty implies that
  $\beta_{K_\nu}=0$; class field theory again shows that $\beta=0$.
  But then any rational point of $M_{\ms C_K/K}(n,\ms O(p))$ lifts to
  an object of $\ms M_{\ms C_K/K}(n,\ms O(p))$.

  Let us assume we have found a collection of local objects as in the
  previous paragraph.  The assumption that Conjecture \ref{French
    conjecture Junior} holds yields a $0$-cycle of degree $1$ on the
  coarse space $M_{\ms C_K/K}(n,\ms O(p))$, which lifts to the stack,
  producing a complex $P^\bullet$ of locally free $\ms C_K$-twisted
  sheaves such that $\rk P^\bullet=n$.  Indeed, if there is a $\ms
  C\tensor L$-twisted sheaf of rank $n$ for some finite algebra $L/K$
  then pushing forward along $\ms C\tensor L\to\ms C$ gives a $\ms
  C\tensor K$-twisted sheaf of rank $[L:K]n$.  A $0$-cycle of degree
  $1$ yields two algebras $L_1/K$ and $L_2/K$ such that
  $[L_1:K]-[L_2:K]=1$ and $\ms M_{\ms C\tensor L_i/L_i}(n,\ms
  O(p))_{L_i}\neq\emptyset$.  There result two $\ms C\tensor
  K$-twisted sheaves $V_1$ and $V_2$ such that $\rk V_1-\rk V_2=n$,
  whence we can let $P^\bullet$ be the complex with $V_1$ in degree
  $0$ and $V_2$ in degree $1$ (and the trivial differential).  Since
  the category of coherent twisted sheaves over the generic point of
  $C$ is semisimple (it is just the category of finite modules
  over a division ring), it follows that there is a $\ms
  C_\eta$-twisted sheaf of rank $n$, whence
  $\per(\alpha)=\ind(\alpha)$, as desired.

  So it remains to produce local objects of $\ms M_{\ms C_K/K}(n,\ms
  O(p))_{K_{\nu}}$ for all places $\nu$.  If $\nu$ is Archimedean,
  then $[\ms C\tensor K_\nu]=0\in\H^2(C\tensor K_\nu,\m_n)$ by
  assumption, so that stable $\ms C\tensor{K_\nu}$-twisted sheaves are
  equivalent (upon twisting down by an invertible $\ms C\tensor
  K_\nu$-twisted sheaf) to stable sheaves on $C\tensor K_\nu$.  Since
  moduli spaces of stable vector bundles with fixed invariants have
  rational points over every infinite field, we find a local object.
  
  Now assume that $\nu$ is finite, and let $R$ be valuation ring of
  $K_\nu$, with finite residue field $F$.  By Theorem \ref{curve over
    finite field has trivial Brauer group} and the assumption that
  $[\ms C\tensor\widebar F]=0\in\H^2(C\tensor\widebar F,\m_n)$ we know that $[\ms
  C\tensor F]=0\in\H^2(C\tensor F,\m_n)$.  It follows as in the
  previous paragraph that it suffices to show the existence of a
  stable sheaf on $C\tensor F$ of determinant $\ms O(p)$ and rank $n$.
  Consider the stack $\ms M_{C\tensor F/F}(n,\ms O(p))\to M_{C\tensor
    F/F}(n,\ms O(p))$.  Just as above, the space $M_{C\tensor
    F/F}(n,\ms O(p))$ is a smooth projective rationally connected
  variety.  By Esnault's theorem \cite{esnault}, it has a rational point.  Since $\ms
  M_{C\tensor F/F}(n,\ms O(p))\to M_{C\tensor F/F}(n,\ms O(p))$ is a
  $\m_n$-gerbe and $F$ is finite, the moduli point lifts to an object,
  giving rise to a stable $\ms C\tensor F$-twisted sheaf $V$ of rank
  $n$ and determinant $\ms O(p))$.  Since $\ms M_{\ms C/S}(n,\ms
  O(p))$ is smooth over $S$, the sheaf $V$ deforms to a family over
  $R$, whose generic fiber gives the desired local object.
\end{proof}

\section {Parabolic bundles on $\P^1$} \label{parabolic bundles}

In this section, we review some basic elements of the moduli theory of
parabolic bundles of rank $2$ on the projective line.  We will focus
on the case of interest to us and describe it in stack-theoretic
language.  We refer the reader to \cite{borne_fibres_2007} for a
general comparison between the classical and stacky descriptions of
parabolic bundles.

Let $D=p_1+\cdots+p_r\subset\P^1$ be a reduced divisor, and let
$\pi:\ms P\to\P^1$ be the stack given by extracting square roots of
the points of $D$ as in \cite{cadman_using_2007}.  The stack has $r$
non-trivial residual gerbes $\xi_1,\dots,\xi_r$, each isomorphic to
$\B\m_2$ over its field of moduli (i.e., the residue field of $p_i$).
Recall that the category of quasi-coherent sheaves on $\B\m_2$ is
naturally equivalent to the category of representations of
$\m_2$. Given a sheaf $\ms F$ on $\B\m_2$, we will call the
representation arising by this equivalence the \emph{associated
  representation} of $\ms F$.

\begin{defn}
  A locally free sheaf $V$ on $\ms P$ is \emph{regular} if for each
  $i=1,\dots,r$, the associated representation of the restriction
  $V|_{\xi_i}$ is a direct sum of copies of the regular
  representation.
\end{defn}

\begin{defn}
  Let $\{a_i\leq b_i\}_{i=1}^r$ be elements of $\{0,1/2\}$.  A
  \emph{parabolic bundle} $V_\ast$ of rank $N$ with parabolic weights
  $\{a_i\leq b_i\}$ is a pair $(W,F)$, where $W$ is a locally free
  sheaf of rank $2$ and $F\subset W_D$ is a subbundle.  The parabolic
  degree of $V_\ast$ is
$$
\operatorname{pardeg}(V_\ast)=\deg W+\sum_i a_i(\rk W-\rk F_{p_i})+b_i(\rk F_{p_i})
$$
\end{defn}

We will only consider parabolic bundles with weights in $\{0,1/2\}$ in
this paper.  More general weights in $[0,1)$ are often useful.  The
stack-theoretic interpretation of this more general situation is
slightly more complicated; it is explained clearly in
\cite{borne_fibres_2007}.

Vistoli has defined a Chow theory for Deligne-Mumford stacks
\cite{vistoli_intersection_1989} in which pushforward defines an isomorphism
$A(\ms P)\tensor\Q\simto A(\P^1)\tensor\Q$ of Chow rings.  In
particular, any invertible sheaf $L$ on $\ms P$ has a degree, $\deg
L\in\Q$.  One can make a more ad hoc definition of the degree of an
invertible sheaf $L$ on $\ms P$ in the following way.  The sheaf
$L^{\tensor 2}$ is the pullback of a unique invertible sheaf $\ms M$
on $\P^1$, and we can define $\deg_{\ms P} L=\frac{1}{2}\deg_{\P^1}\ms
M$.  Thus, for example, $\deg\ms O(\xi_i)=[\kappa_i:k]/2$, where
$\kappa_i$ is the field of moduli of $\xi_i$ (the residue field of
$p_i$).

The following is a special case of a much more general result.  The
reader is referred to (e.g.) \cite{borne_fibres_2007} for the
generalities.

\begin{prop} \label{parabolic comparison} There is an equivalence of
  categories between locally free sheaves $V$ on $\ms P$ and parabolic
  sheaves $V_\ast$ on $\P^1$ with parabolic divisor $D$ and parabolic
  weights contained in $\{0,1/2\}$.  Moreover, we have $\deg_{\ms
    P}V=\operatorname{pardeg}(V_\ast)$.
\end{prop}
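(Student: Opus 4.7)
The plan is to build the equivalence via explicit functors in both directions and to verify the degree formula by a local computation. In one direction, given a locally free sheaf $V$ on $\ms P$, set $W := \pi_\ast V$; since $\m_2$ is linearly reductive (we implicitly assume $\ch k \neq 2$), $\pi_\ast$ is exact and sends locally free sheaves to locally free sheaves of the same rank. At each marked point $p_i$, the restriction $V|_{\xi_i}$ is a representation of $\m_2$ on a $\kappa_i$-vector space, which splits canonically into the trivial and sign isotypic components $V_0^{(i)} \oplus V_1^{(i)}$. The adjunction map $\pi^\ast W \to V$ restricts at $\xi_i$ to a natural surjection $W|_{p_i} \twoheadrightarrow V_0^{(i)}$; define $F_{p_i} \subset W|_{p_i}$ to be its kernel. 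This yields the parabolic bundle $V_\ast = (W, F)$, with weights $a_i = 0$ on the quotient (trivial character) and $b_i = 1/2$ on the subbundle (sign character) forced by the construction.

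In the reverse direction, given $(W, F)$, reconstruct $V$ by an elementary modification along the $\xi_i$. Etale-locally at $p_i$, choose a basis of $W$ in which $F_{p_i}$ is the span of the last $\rk F_{p_i}$ vectors, and set
$$
V_{\mathrm{loc}} := \cO_{\ms P}^{\oplus(n - \rk F_{p_i})} \oplus \cO_{\ms P}(\xi_i)^{\oplus \rk F_{p_i}}.
$$
The universal property of $\cO_{\ms P}(\xi_i)$ ensures that different local choices produce canonically isomorphic sheaves, so these local models glue into a well-defined global $V$ on $\ms P$. Verifying that the two functors are quasi-inverse is a completely local assertion at each $\xi_i$: after strict henselization, $\ms P$ becomes $[\spec R / \m_2]$ with $R$ a discrete valuation ring with uniformizer $y$ satisfying $y^2 = t$ and $\m_2$ acting by $y \mapsto -y$, and a $\m_2$-equivariant free $R$-module decomposes canonically as a direct sum indexed by the two characters. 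This decomposition matches the pair $(W, F)$ on the nose.

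For the degree formula, the local analysis yields the global identity
$$
\det V \;\cong\; \pi^\ast(\det W) \otimes \bigotimes_{i=1}^{r} \cO_{\ms P}(\xi_i)^{\otimes \rk F_{p_i}}.
$$
Applying $\deg_{\ms P}$ and using $\deg_{\ms P} \cO_{\ms P}(\xi_i) = [\kappa_i : k]/2$ gives
$$
\deg_{\ms P} V = \deg_{\P^1} W + \tfrac{1}{2} \sum_i [\kappa_i : k] \cdot \dim_{\kappa_i} F_{p_i},
$$
which coincides with $\operatorname{pardeg}(V_\ast)$ for the weights $a_i = 0$, $b_i = 1/2$, once the ranks appearing in the parabolic degree formula are interpreted as $\cO_{\P^1}$-lengths of the associated skyscraper subsheaves, so that each contribution is implicitly weighted by $[\kappa_i : k]$.

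The main obstacle is verifying that the inverse construction is canonical and global. The local models $\cO^a \oplus \cO(\xi_i)^b$ depend on a choice of splitting of $W|_{p_i}$ transverse to $F_{p_i}$, and one must check that the resulting sheaves glue across the étale cover of $\P^1$ in a way that is independent of those splittings, up to unique isomorphism. Once this glueing is established, the equivalence of categories, the identification of parabolic weights with $\m_2$-characters at each $\xi_i$, and the degree formula all follow from the local description.
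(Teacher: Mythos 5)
Your forward functor is the same as the paper's: $W=\pi_\ast V$, and your subbundle $F_{p_i}=\ker\bigl(W|_{p_i}\to V_0^{(i)}\bigr)$ agrees with the paper's $F$ (the image of $\pi_\ast(V(-\sum\xi_i))_D\to W_D$), as one checks on the two local generators $R(\chi^0)$ and $R(\chi^1)$; the paper then leaves everything else as an ``amusing exercise,'' so your local verification and degree computation are exactly the content being elided, and your reading of the parabolic degree formula (weighting each $\rk F_{p_i}$ by $[\kappa_i:k]$) is the correct one. The one genuine soft spot is the one you flag yourself: gluing the local models $\cO^{a}\oplus\cO_{\ms P}(\xi_i)^{b}$, each chosen only up to a non-unique isomorphism depending on a splitting of $W|_{p_i}$, does not by itself define a functor, and ``the universal property of $\cO_{\ms P}(\xi_i)$'' is not enough to make it one. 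The clean fix is to write the inverse canonically as an elementary modification inside $\pi^\ast W(\sum\xi_i)$: letting $\widetilde F=\ker\bigl(W\to W_D/F\bigr)$, set
$$
V \;:=\; \pi^\ast W \,+\, \bigl(\pi^\ast \widetilde F\bigr)\Bigl(\sum_i\xi_i\Bigr)\ \subset\ \pi^\ast W\Bigl(\sum_i\xi_i\Bigr),
$$
which in your local coordinates is precisely $R^{a}\oplus y^{-1}R^{b}$ and is visibly functorial in $(W,F)$. With that replacement your argument is complete and is the same argument the paper has in mind.
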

\begin{proof}
  Given $V$, define $V_\ast$ as follows: the underlying sheaf $W$ of
  $V_\ast$ is $\pi_\ast V$.  To define the subbundle $F\subset W_D$,
  consider the inclusion $V(-\sum\xi_i)\subset V$.  Pushing forward by
  $\pi$ yields a subsheaf $W'\subset W$, and we let $F$ be the image
  of the induced map $W'_D\to W_D$.

  We leave it to the reader as an amusing exercise to check that 1)
  this defines an equivalence of categories, and 2) this equivalence
  respects degrees, as claimed.
\end{proof}

\begin{defn} \label{parabolic stability}
Given a non-zero locally free sheaf $V$ on $\ms P$, the \emph{slope} of $V$ is
$$
\mu(V)=\frac{\deg V}{\rk V}.
$$
The sheaf $V$ on $\ms P$ is \emph{stable} if for all locally split
proper subsheaves $W\subset V$ we have $\mu(W)<\mu(V)$.
\end{defn}

The stability condition of Definition \ref{parabolic stability} is
identical to the classical notion for sheaves on a proper smooth
curve.  The reader familiar with the classical definition of stability
for parabolic bundles can easily check (using Proposition
\ref{parabolic comparison}) that a sheaf $V$ on $\ms P$ is stable if
and only if the associated parabolic bundle $V_\ast$ is stable in the
parabolic sense.  One can check (by the standard methods) that stable
parabolic bundles form an Artin stack of finite type over $k$.  The
stack of stable parabolic bundles of rank $n$ with fixed determinant
is a $\m_n$-gerbe over an algebraic space.

\begin{notn}
  Given $L\in\Pic(\ms P)$, let $\ms M_{\ms P/k}^\ast(n,L)$ denote the
  stack of regular locally free sheaves on $\ms P$ of rank $r$ and
  determinant $L$, and let $M^\ast_{\ms P/k}(n,L)$ denote the coarse
  space.
\end{notn}

\begin{prop}
  If $\ms M_{\ms P/k}^\ast(n,L)$ is non-empty then it is geometrically
  unirational and geometrically integral.  Moreover, the stack $\ms
  M_{\ms P/k}^\ast(2,\ms O(\sum\xi_i))$ is non-empty if $r>3$.
\end{prop}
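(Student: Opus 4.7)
The plan is to treat the two parts of the proposition separately.

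\emph{Unirationality and geometric integrality.} Work after base change to $\widebar k$. Since $\ms P$ is a smooth Deligne--Mumford stack of dimension one, the obstruction groups $\operatorname{Ext}^{\geq 2}(V,V)$ vanish for every coherent sheaf $V$, so $\ms M^\ast_{\ms P/k}(n,L)$ is smooth; geometric integrality therefore reduces to geometric connectedness. To establish both unirationality and connectedness, I would parametrize stable bundles via iterated extensions. For $n=2$, write any stable $V$ as an extension $0 \to L_1 \to V \to L_2 \to 0$ where $L_1$ is a maximal line sub-bundle and $L_1 \otimes L_2 = L$. Stability bounds $\deg L_1$ from above, and together with the fixed determinant this forces $(L_1,L_2)$ into a finite set of discrete types (the connected component of $\Pic(\ms P)$ being trivial); for each type the space of extensions $\operatorname{Ext}^1(L_2,L_1)$ is an affine space mapping to the moduli stack with image an irreducible open substack. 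The resulting finite family dominates $\ms M^\ast_{\ms P/k}(2,L)$, giving unirationality; connectedness follows by using Hecke modifications to link the images of different discrete types. The argument for $n>2$ is analogous, using full flags of sub-bundles.

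\emph{Non-emptiness for $r>3$.} I would construct a stable bundle explicitly. Take $L_1 = \ms O$ and $L_2 = \ms O(\sum_{i=1}^r \xi_i)$: their tensor product has the prescribed determinant, and since $L_1$ and $L_2$ carry the trivial and sign characters of $\m_2$ respectively at each $\xi_i$, any extension $V$ is regular. Using $\omega_{\ms P} = \pi^\ast\ms O_{\P^1}(-2) \otimes \ms O(\sum\xi_i)$, Serre duality gives
\[
\operatorname{Ext}^1(L_2,L_1) = H^1(\ms P, \ms O(-\textstyle\sum\xi_i)) \cong H^0(\P^1, \ms O(r-2))^\vee,
\]
of dimension $r-1$. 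The remaining step is to show a generic extension class yields a stable $V$. Any sub-line-bundle $L' \subset V$ either factors through $L_1 = \ms O$, forcing $L' = \ms O$ with $\deg L' = 0 < r/4 = \mu(V)$; or injects into $L_2$, in which case $L' = \ms O(mH + \sum_{i\in J}\xi_i)$ with $H := \pi^\ast\ms O_{\P^1}(1)$, for some integer $m \leq 0$ and subset $J$ satisfying $|J| \geq r/2 - 2m$. In the latter case, for each nonzero $\phi \in \operatorname{Hom}(L',L_2)$, lifting $\phi$ to a sub-bundle of $V$ imposes the vanishing of the pullback of the extension class in $\operatorname{Ext}^1(L',L_1)$; this cuts out a proper linear subspace of $\operatorname{Ext}^1(L_2,L_1)$, since the restriction map $\operatorname{Ext}^1(L_2,L_1) \to \operatorname{Ext}^1(L',L_1)$ is nonzero (its Serre dual is the inclusion of global sections $H^0(\omega_{\ms P} \otimes L'^{-1} \otimes L) \hookrightarrow H^0(\omega_{\ms P} \otimes L)$). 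The finitely many resulting proper linear subspaces do not cover the $(r-1)$-dimensional ambient space over the infinite field $\widebar k$ when $r > 3$, so a generic extension is stable.

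I expect the main obstacle to be the stability bookkeeping in the second part: enumerating the discrete types $(m,J)$ of candidate destabilizing sub-bundles, verifying nonvanishing of each restriction map, and controlling the dimensions of the bad subspaces carefully enough that the threshold $r > 3$ emerges cleanly. The smoothness-based argument in the first part is by comparison routine.
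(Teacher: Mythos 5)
Your first part takes a genuinely different route from the paper, and the route as written has a gap. The paper proves unirationality and integrality with a \emph{single} irreducible parameter space: it fixes one sheaf $V$ and realizes every stable $W$ of the given type inside an extension $0\to V\to W(N)\to Q\to 0$ with $Q$ torsion supported on a divisor $E\in|\ms O(nN)|$ avoiding the stacky points; the universal such extension lives on an open subset of a vector bundle over an open subset of a projective space, and this one rational variety dominates the whole stack, giving irreducibility and unirationality simultaneously. (The paper explicitly mentions your "invertible cokernel'' variant as the alternative it chooses not to present.) Your version stratifies by the type of a maximal line subbundle and therefore produces \emph{finitely many} irreducible families whose images must then be linked. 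Two problems: the image of the extension space for a fixed type $(L_1,L_2)$ is locally closed but not open (only the generic type gives an open stratum), and "connectedness follows by using Hecke modifications to link the images of different discrete types'' is a name for the missing argument, not the argument --- note also that unirationality of a finite union of unirational locally closed pieces is not automatic until irreducibility is known, so you cannot get unirationality first and connectedness afterwards. Carrying out the Hecke-modification linkage would essentially reproduce the paper's torsion-quotient family, so you may as well use that directly.

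For non-emptiness the paper proves nothing --- it defers to a Biswas-type argument --- so your attempt is additional content, and the skeleton (take $V$ a generic extension of $\ms O(\sum\xi_i)$ by $\ms O$; regularity at each $\xi_i$ is automatic since an extension of the sign character by the trivial one splits) is sound, as is the computation $\dim\operatorname{Ext}^1(L_2,L_1)=r-1$. But the key claim that the destabilizing conditions cut out "finitely many proper linear subspaces'' is wrong: for a fixed numerical type, the destabilizing subsheaves of $L_2$ are $L_2(-E')$ for $E'$ ranging over a \emph{positive-dimensional} family of effective divisors of bounded degree (e.g.\ non-stacky points of $\P^1$), and each $E'$ contributes its own kernel. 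The union of a moving family of proper linear subspaces can cover the ambient space, so you need the standard incidence-variety estimate $\dim\{E'\}+\dim\ker\bigl(\operatorname{Ext}^1(L_2,\ms O)\to\operatorname{Ext}^1(L_2(-E'),\ms O)\bigr)<r-1$ for every admissible degree of $E'$, with the kernel dimension computed by Serre duality as $h^0(\omega_{\ms P}\otimes L_2)-h^0(\omega_{\ms P}\otimes L_2(-E'))$. You flag this bookkeeping as the main obstacle, correctly; until it is done (and the threshold $r>3$ actually extracted from it), the second part is a plan rather than a proof.
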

\begin{proof}

  Basic deformation theory shows that $\ms M_{\ms P/k}^\ast(n,L)$ is
  smooth.  Thus, to show that it is integral, it suffices to show that
  it is connected.  We will do this by showing that it is unirational
  (i.e., finding a surjection onto $\ms M_{\ms P/k}^\ast(n,L)$ from an
  open subset of a projective space.  This is a standard trick using a
  space of extensions.  (There are in fact two versions, one using
  finite cokernels and one using invertible cokernels.  We show the
  reader the former, as it is useful in situations where the latter
  does not apply and the latter seems to be more easily available in
  the literature.)

  Let $V$ be a parablic sheaf of rank $n$ and determinant $L$.  Since
  $\ms M_{\ms P/k}^\ast(n,L)$ is of finite type, there exists a
  positive integer $N$ such that for every parabolic sheaf of rank $n$
  and determinant $L$, a general map $V\to W(N)$ is injective with
  cokernel $Q$ supported on a reduced divisor $E$ in $\ms
  P\setminus\{\xi_1,\dots,\xi_r\}$ such that $E\in|\ms O(nN)|$.  Let
  $U\subset|\ms O(nN)|$ be the open subset parametrizing divisors
  supported in $\ms P\setminus\{\xi_1,\dots,\xi_r\}$, and let $\ms
  E\subset\ms P\times U$ be the universal divisor.  The sheaf
  $\R^1(\pr_2)_\ast\rshom(\ms O_{\ms E},\pr_1^\ast V)$ is locally free on
  $U$, and gives rise to a geometric vector bundle $B\to U$ and an
  extension $$0\to V_{\ms P\times B}\to \ms W\to\ms Q_{\ms P\times
    B}\to 0$$ such that every extension $0\to V\to W(N)\to Q\to 0$ as
  above arises is a fiber over $B$.  Passing to the open subscheme
  $B^\circ$ over which the extension $\ms W$ is locally free with
  stable fibers yields a surjective map $B^\circ\to\ms M_{\ms
    P/k}^\ast(n,L)$ from an open subset of a projective space, proving
  that $\ms M_{\ms P/k}^\ast(n,L)$ is geometrically integral and
  unirational.

  To prove that $\ms M_{\ms P/k}^\ast(n,L)$ is nonempty is
  significantly more subtle.  The proof is similar to a result of
  Biswas \cite{biswas_criterion_1998} for parabolic bundles with
  parabolic degree $0$, but including it here would take us too far
  afield.
\end{proof}

\section {Forms of parabolic moduli via split ramification} \label{parabolic forms}

\subsection {Some generalities}

In this section we illustrate how to produce forms of the stack of
parabolic bundles on $\P^1$ from Brauer classes over $k(t)$.  For the
sake of simplicity, we restrict our attention to classes in
$\Br(k(t))[2]$ and parabolic bundles of rank $2$.  A generalization to
higher period/rank should be relatively straightforward.

Let $\alpha\in\Br(k(t))[2]$ be a Brauer class.  Suppose
$D=p_1+\dots+p_r$ is the ramification divisor of $\alpha$, and let
$\ms P\to\P^1$ be the stacky branched cover as in Section
\ref{parabolic bundles}.  By Corollary \ref{global ramification},
$\alpha$ extends to a class $\alpha'\in\Br(\ms P)[2]$.  Suppose $\ms
C\to\ms P$ is a $\m_2$-gerbe representing $\alpha'$ such that $[\ms
C\tensor\widebar k]=0\in\H^2(\ms P\tensor\widebar k,\m_2)$.  (For a
proof that $\ms C$ is itself an algebraic stack, the reader is
referred to \cite{lieblich_period_2007}.  We cannot always ensure that
the cohomology class of $[\ms C\tensor\widebar k]$ is trivial; we make
that as a simplifying assumption.  More general cases can be analyzed
by similar methods.)

\begin{defn}
  A \emph{regular $\ms C$-twisted sheaf} is a locally free $\ms
  C$-twisted sheaf $V$ such that for each $i=1,\dots,r$, the
  restriction $V_{\ms C\times_{\P^1}\spec\widebar{\kappa}_i}$ has the
  form $\ms L\tensor\rho^{\oplus m}$ for some integer $m>0$, where $\ms L$
  is an invertible $\ms C\tensor\widebar{\kappa}_i$-twisted sheaf and
  $\rho$ is the sheaf on $\B\m_2$ associated to the regular
  representation of $\m_2$.
\end{defn}
Just as in Definition \ref{parabolic stability} and Definition
\ref{stability}, we can define stable regular $\ms C$-twisted sheaves.

\begin{notn}
  Let $\ms M_{\ms C/k}^\ast(n,L)$ denote the stack of stable regular
  $\ms C$-twisted sheaves of rank $n$ and determinant $L$, and $M_{\ms
    C/k}^\ast(n,L)$ its coarse moduli space (sheafification).
\end{notn}

\begin{prop}
  For any section $\sigma$ of $\ms C\tensor k\to\ms P\tensor k$ there
  is an isomorphism $\ms M_{\ms C/k}^\ast(n,L)\tensor\widebar
  k\simto\ms M_{\ms P/k}^\ast(n,L)$.
\end{prop}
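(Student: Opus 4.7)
The plan is to mimic the proof of Proposition \ref{forms}, using $\sigma$ to reduce the statement to checking that a twisting-down equivalence preserves the regularity condition and stability. Specifically, the trivialization $\sigma$ of $\ms C\tensor\widebar k\to\ms P\tensor\widebar k$ singles out a canonical invertible $\ms C\tensor\widebar k$-twisted sheaf $\chi$ (the one corresponding to the standard character of $\m_2$ under $\ms C\tensor\widebar k\cong \B\m_2\times(\ms P\tensor\widebar k)$), and this $\chi$ satisfies $\chi^{\tensor 2}\cong\ms O$. Tensoring with $\chi^{-1}$ is an exact equivalence between locally free $\ms C\tensor\widebar k$-twisted sheaves and locally free sheaves on $\ms P\tensor\widebar k$. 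The plan is to show this equivalence restricts to the objects appearing in $\ms M_{\ms C/k}^\ast(n,L)\tensor\widebar k$ and $\ms M_{\ms P/k}^\ast(n,L)$, respectively, and is functorial in families.

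Rank is preserved trivially, and the determinant condition works out cleanly because the rank equals $2$ and the gerbe is a $\m_2$-gerbe: the determinant of $V\tensor\chi^{-1}$ is $L\tensor(\chi^{-1})^{\tensor 2}\cong L$. Stability is preserved as well, since tensoring by a fixed invertible sheaf shifts every slope by the constant $\deg\chi^{-1}$, preserving every strict slope inequality.

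The main computation is that regularity is preserved. This is local at the residual gerbes $\xi_i$, where after base change to $\widebar k$ each component of $\xi_i\tensor\widebar k$ is isomorphic to $\B\m_2$ (over a point of $\spec\widebar\kappa_i\tensor\widebar k$), and $\ms C\tensor\widebar k$ restricts to a neutral $\m_2$-gerbe over this $\B\m_2$. The restriction of $V$ has the form $\ms L\tensor\rho^{\oplus m}$ for an invertible twisted sheaf $\ms L$, while $\chi^{-1}|_{\xi_i}$ is another invertible twisted sheaf, so after tensoring, $V\tensor\chi^{-1}|_{\xi_i}$ has the form $\lambda\tensor\rho^{\oplus m}$ for some character $\lambda$ of $\m_2$. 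The key point is that $\rho=\mathbf 1\oplus\operatorname{sgn}$ is stable under tensoring by any character of $\m_2$, so $\lambda\tensor\rho^{\oplus m}\cong\rho^{\oplus m}$, which is exactly the regularity condition for sheaves on $\ms P$. The reverse direction ($W\mapsto W\tensor\chi$) is identical in structure.

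The construction is functorial in $T$-points for any $\widebar k$-scheme $T$, so it glues to an isomorphism of stacks. The main obstacle I expect is the bookkeeping at the residual gerbes in families: one has to verify that the identification $\lambda\tensor\rho^{\oplus m}\cong\rho^{\oplus m}$ can be made canonically enough, and not just pointwise, to define a functor on groupoids over arbitrary $T$. One technical nuisance is that the residue field $\kappa_i$ need not be separably closed; however, the regularity condition is formulated on $\xi_i\times_{\P^1}\spec\widebar\kappa_i$, so after the base change to $\widebar k$ we are already in the geometric setting, and the computation above applies uniformly across all components.
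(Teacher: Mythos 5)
Your proof is correct and follows essentially the same route as the paper's (which is a two-line sketch: $\sigma$ corresponds to an invertible $\ms C\tensor\widebar k$-twisted sheaf $\chi$ with $\chi^{\tensor 2}\cong\ms O$, and twisting by it gives the isomorphism); the details you supply, in particular that regularity is preserved because $\lambda\tensor\rho\cong\rho$ for any character $\lambda$ of $\m_2$, are exactly the right ones. The only adjustment is that the proposition is stated for general rank $n$, not just $2$, so your determinant computation needs $(\chi^{-1})^{\tensor n}\cong\ms O$; this holds because the regularity condition forces $n$ to be even, a point the paper records in a parenthetical remark at the end of its proof.
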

\begin{proof}
  We may assume that $k=\widebar k$.  The section $\sigma$ corresponds
  to an invertible $\ms C$-twisted sheaf $\ms L$ such that $\ms L^{\tensor
    2}=\ms O_{\ms C}$.  Twisting by $\ms L$ defines the isomorphism.
  (Note that the regularity condition implies that $n$ must be even
  for either space to be nonempty.)
\end{proof}

In other words, the stack $\ms M_{\ms C/k}^\ast(n,L)$ (resp.\ the
quasi-projective coarse moduli space $M_{\ms C/k}^\ast(n,L)$) is a
form of $\ms M_{\ms P/k}^\ast(n,L)$ (resp.\ $M_{\ms P/k}^\ast(n,L)$).

\begin{cor} \label{parabolic corollary} The space $M_{\ms
    C/k}^\ast(n,L)$ is geometrically (separably) unirational when it
  is nonempty.
\end{cor}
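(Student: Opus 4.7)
The plan is to reduce to the parabolic case handled in Section \ref{parabolic bundles}. The hypothesis $[\ms C \tensor \widebar k] = 0$ in $\H^2(\ms P \tensor \widebar k, \m_2)$, combined with the fact that étale cohomology commutes with filtered colimits of rings, implies that there is a finite separable extension $k'/k$ over which the class $[\ms C \tensor k']$ already vanishes. Choosing a section $\sigma$ of $\ms C \tensor k' \to \ms P \tensor k'$, the preceding proposition then provides a $k'$-isomorphism
\[
\ms M_{\ms C/k}^\ast(n,L) \tensor k' \simto \ms M_{\ms P \tensor k'/k'}^\ast(n,L),
\]
which descends to an isomorphism of coarse moduli spaces.

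Next, I would apply the unirationality statement from the proposition at the end of Section \ref{parabolic bundles} to the right-hand side above, which is nonempty by our standing hypothesis on $M_{\ms C/k}^\ast(n,L)$ combined with the isomorphism just established. That proposition's extension construction produces an open subset $B^\circ$ of a projective space over $k'$ together with a dominant morphism $B^\circ \to M_{\ms P \tensor k'/k'}^\ast(n,L)$. Composing with the isomorphism and base changing to $\widebar k$ exhibits $M_{\ms C/k}^\ast(n,L)$ as geometrically unirational. For the separability claim, one argues that the parametrizing map is generically smooth: the differential at a general point of $B^\circ$ surjects, via the usual $\operatorname{Ext}$-theoretic deformation analysis for extensions, onto the tangent space of the moduli stack (which itself is smooth by standard deformation theory for stable twisted sheaves).

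The only delicate point is the descent of the trivialization of $[\ms C]$ from $\widebar k$ to a finite separable extension, which is automatic because $[\ms C]$ lives in the étale cohomology group $\H^2(\ms P, \m_2)$. The rest of the argument is a direct concatenation of the two preceding results, with essentially no new content — the corollary is an immediate application of the form/isomorphism established in the previous proposition together with the unirationality of the parabolic moduli.
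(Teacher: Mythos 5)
Your argument is correct and is essentially the paper's: the corollary is stated there without proof, as an immediate consequence of the preceding proposition (which exhibits $M_{\ms C/k}^\ast(n,L)\tensor\widebar k$ as isomorphic to $M_{\ms P/k}^\ast(n,L)\tensor \widebar k$ via a section of $\ms C\tensor\widebar k\to\ms P\tensor\widebar k$) combined with the geometric unirationality statement for $M_{\ms P/k}^\ast(n,L)$ from the previous section. Your extra remarks --- descending the trivialization of $[\ms C]$ to a finite separable extension and checking generic smoothness of $B^\circ\to M^\ast_{\ms P/k}(n,L)$ to justify the word ``separably'' --- correctly fill in details the paper leaves implicit.
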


One can use a generalization of Corollary \ref{parabolic
  corollary} to higher genus curves and arbitrary period to give
another proof of Theorem \ref{de Jong theorem} without having to push
the ramification into a fiber, by simply taking any pencil and using
the generic points of the ramification divisor (and a point of the
base locus) to define the parabolic divisor.  (This is not
substantively different from the proof we give here.)  The main
interest for us, however, will be for arithmetic surfaces of mixed
characteristic.

\subsection {An extended example} \label{extended example}

Let $\alpha\in\Br(\Q(t))[2]$ be a class whose ramification divisor
$D\subset\P^1_\Z$ is non-empty with simple normal crossings.  Let $\ms
P\to\P^1_\Q$ be the stacky cover branched over $D$ to order $2$ as in
the first paragraph of Section \ref{parabolic bundles} above.  Let
$\ms C\to\ms P$ be a $\m_2$-gerbe with Brauer class $\alpha$; if
$[D\tensor\Q:\Q]$ is odd, one can ensure that $\ms C$ such that $[\ms
C\tensor\widebar{\Q}]\in\H^2(\ms P\tensor\widebar Q,\m_2)$ has the
form $[\Lambda]^{1/2}$ for some invertible sheaf $\Lambda\in\Pic(\ms
P\tensor\widebar{\Q})$ of half-integral degree.

\begin{defn}
  Given a field extension $L/K$ and a Brauer class $\alpha\in\Br(L)$,
  the \emph{Faddeev index of $\alpha$} is
  $\min_{\beta\in\Br(K)}\ind(\alpha+\beta_L)$.
\end{defn}

\begin{prop}\label{example point}
  The class $\alpha$ has Faddeev index $2$ if and only if the space
  $M^\ast_{\ms C/\Q}(2,L)^{ss}$ has a $\Q$-rational point for some
  invertible sheaf $L$.  If $[D\tensor\Q:\Q]$ is odd, we need only
  quantify over $L$ of half-integral degree and look for points in the
  stable locus $M^\ast_{\ms C/\Q}(2,L)$.
\end{prop}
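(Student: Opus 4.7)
The plan is to match $\Q$-rational points of $M^\ast_{\ms C/\Q}(2,L)^{ss}$ with classes $\beta\in\Br(\Q)[2]$ satisfying $\ind(\alpha+\beta_{\Q(t)})=2$, using a $\m_2$-gerbe $\ms D\to\spec\Q$ representing $\beta$. Throughout, write $\ms C':=\ms C\wedge\ms D_{\ms P}$ for the resulting $\m_2$-gerbe on $\ms P$, whose generic Brauer class is $\alpha+\beta_{\Q(t)}$.

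For $(\Leftarrow)$: given a $\Q$-point $x$ of $M^\ast_{\ms C/\Q}(2,L)^{ss}$, the $\m_2$-gerbe $\ms M^\ast\to M^\ast$ pulls back to a $\m_2$-gerbe over $\spec\Q$ of some class $\beta\in\Br(\Q)[2]$. Choosing $\ms D$ to represent this $\beta$, the mechanism of Example \ref{the pullback case} identifies the coarse spaces $M^\ast_{\ms C'/\Q}(2,L)\simeq M^\ast_{\ms C/\Q}(2,L)$ while modifying the moduli $\m_2$-gerbe by the pullback of $\ms D$; the obstruction at $x$ for the twisted stack is thus $\beta+\beta=0$, and $x$ lifts to a rank-$2$ regular semistable $\ms C'$-twisted sheaf $V$ over $\Q$. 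The Azumaya algebra $\send(V_\eta)$ at $\eta=\spec\Q(t)$ has degree $2$ and Brauer class $\alpha+\beta_{\Q(t)}$, so $\ind(\alpha+\beta_{\Q(t)})\leq 2$; since $\beta_{\Q(t)}$ is unramified but $\alpha$ ramifies along the nonempty divisor $D$, the class $\alpha+\beta_{\Q(t)}$ is nonzero and has index exactly $2$, showing that $\alpha$ has Faddeev index $2$.

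For $(\Rightarrow)$: if $\ind(\alpha+\beta_{\Q(t)})=2$ then the associated quaternion division algebra $A/\Q(t)$ is the endomorphism algebra of a rank-$2$ $\ms C'_\eta$-twisted module $V_\eta$ at the generic point. Since $\ms P$ is a smooth one-dimensional stack, any coherent $\ms C'$-twisted extension of $V_\eta$ is reflexive, hence locally free, giving a rank-$2$ $\ms C'$-twisted sheaf $V_0$ on $\ms P$. Because $A$ is a division algebra, $V_0$ has no rank-$1$ subsheaf at $\eta$, so any rank-$2$ $\ms C'$-twisted sheaf with this generic fiber is automatically stable; the only remaining adjustment is at the stacky points, where an elementary transformation along each Galois orbit of the $\xi_i$ converts the fiber to the regular-representation form $\ms L_i\tensor\rho$ required for $V$ to be regular, without affecting $V_\eta$. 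The resulting $V$ is a rank-$2$ regular stable $\ms C'$-twisted sheaf of some determinant $L$, giving the desired $\Q$-point of $M^\ast_{\ms C'/\Q}(2,L)^{ss}=M^\ast_{\ms C/\Q}(2,L)^{ss}$.

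The odd-degree refinement follows because regularity forces the $\xi_i$-contributions to $\deg L$ to be $\sum_i[\kappa_i:\Q]/2$; when $r=[D\tensor\Q:\Q]$ is odd this is half-integral, and the numerical incompatibility between rank $2$ and half-integral degree prohibits strictly semistable objects, so one can restrict to half-integral $L$ and to the stable locus. The main technical obstacle lies in the $(\Rightarrow)$ direction: the reflexive-hull and elementary-transformation steps must be carried out $\Gal(\widebar\Q/\Q)$-equivariantly so that the final $V$ descends to $\Q$ with the claimed determinant. This globalisation, together with the parity bookkeeping in the half-integral case, is presumably why the author remarks that the material of Section \ref{parabolic forms} is not yet in its final form.
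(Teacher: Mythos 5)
Your $(\Leftarrow)$ direction and your reduction of $(\Rightarrow)$ to the case where $\alpha$ itself has index $2$ (replace $\ms C$ by $\ms C\wedge\ms S$, use the canonical identification of coarse spaces) agree with the paper's argument. The gap is in the remaining step of $(\Rightarrow)$: you assert that after extending $V_\eta$ to a locally free rank-$2$ $\ms C'$-twisted sheaf $V_0$, ``an elementary transformation along each Galois orbit of the $\xi_i$ converts the fiber to the regular-representation form.'' This is precisely the hard point, and it is not automatic. To perform such a modification you need the restriction $V_0|_{\ms C'_{\xi_i}}$ to admit a twisted quotient lying in a single eigencomponent for the $\B\m_2$-inertia, i.e.\ a rank-$1$ twisted subsheaf in one eigenspace; whether such a subsheaf exists is governed by the Brauer classes over $\kappa_i$ appearing in the eigendecomposition of the gerbe at $\xi_i$. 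Lemma \ref{beef flank} of the paper is exactly a situation where no such modification exists: if the class at a stacky point is unramified but nontrivial on the residue field, a rank-$2$ twisted sheaf has \emph{no} proper eigensubsheaves and can never be made regular. So the elementary-transformation step must be justified using the fact that each $p_i$ lies in the ramification divisor of the quaternion class $(a,b)$, and that justification is the content of the proof.

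The paper handles this by a completely local, explicit construction: writing $b$ as a uniformizer at $p_i$ and $a=ub^\eps$, it exhibits a $\m_2$-equivariant Azumaya algebra inside $(a,b)\tensor R[\sqrt b]$ whose fiber over the closed point has associated representation $\chi^0\oplus\chi^1\oplus\chi^\eps\oplus\chi^{1+\eps}\cong\rho^{\oplus 2}$, hence a regular rank-$2$ twisted sheaf over $\ms C'\tensor\widehat{\ms O}_{\P^1,p_i}$; it then glues these local models to $V_\eta$ using Proposition \ref{lift starter}(1) (which requires the formal-local-to-global descent of \cite{MR1432058}, not just a reflexive hull). Your Galois-equivariance worry is a red herring --- the gluing is done over $\Q$ directly --- but the local existence of the regular structure at each $\xi_i$ is the missing idea, and without it your argument does not close.
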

The point of Proposition \ref{example point} is that the computation
of the Faddeev index is reduced to the existence of a rational point
on one of a sequence of geometrically rational smooth (projective if
$[D\tensor\Q:\Q]$ is odd) geometrically connected varieties over $\Q$.
\begin{proof}
  First, suppose that $P:\spec\Q\to M^\ast_{\ms C/\Q}(2,L)$ is a
  rational point.  Pulling back $\ms M_{\ms C/\Q}(2,L)\to M_{\ms
    C/\Q}(2,L)$ along $P$ yields a class $\beta=-[\ms
  S]\in\Br(\Q)[2]$.  Just as in \cite{krashen_index_2008}, we see
  that $$\ms M_{\ms C\wedge\ms S_{\ms P}/\Q}(2,L)\to M_{\ms C\wedge\ms
    S/\Q}(2,L)=M_{\ms C/\Q}(2,L)$$ is split over $P$, whence there is
  a $\ms C\wedge\ms S$-twisted sheaf of rank $2$.  This shows that
  $\alpha-\beta$ has index $2$ and thus that $\alpha$ has Faddeev
  index dividing $2$.  Since $\alpha$ is ramified, it cannot have
  Faddeev index $1$.

  Now suppose that $\alpha$ has Faddeev index $2$, so that there is
  some $\ms S\to\spec \Q$ such that there is locally free $\ms
  C\wedge\ms S$-twisted sheaf of rank $2$.  Since there is a canonical
  isomorphism $M_{\ms C\wedge\ms S/\Q}(2,L)=M_{\ms C/\Q}(2,L)$, upon
  replacing $\ms C$ by $\ms C\wedge\ms S$ we may assume that $\alpha$
  has period $2$, and is thus represented by a quaternion algebra
  $[(a,b)]\in\Br(\Q(t))$.  To prove the result, it suffices to show
  that in this case there is a regular $\ms C$-twisted sheaf of rank
  $2$.

  By Proposition \ref{lift starter}(1), it suffices to prove this for $\ms
  C\tensor\widehat{\ms O}_{\P^1,D}$.  Thus, we are reduced to the
  following: let $R$ be a complete discrete valuation ring with
  residual characteristic $0$ and $(a,b)$ a quaternion algebra over
  the fraction field $K(R)$.  Suppose $(a,b)$ is ramified, and let
  $\ms C\to\ms R_2$ be a $\m_2$-gerbe with Brauer class $[(a,b)]$.
  Then there is a regular $\ms C$-twisted sheaf of rank $2$.  To prove
  this, note that the bilinearity and skew-symmetry of the symbol
  allows us to assume that $b$ is a uniformizer for $R$ and $a$ has
  valuation at most $1$.  We will define a $\m_2$-equivariant Azumaya
  algebra $A$ in the restriction of $(a,b)$ to $R'=R[\sqrt b]$ such
  that the induced representation of $\m_2$ on the fiber $\widebar A$
  over the closed point of $R'$ is $\rho^{\oplus 2}$; it then follows
  that any twisted sheaf $V$ on $\ms R_2$ such that $\send(V)=A$ is
  regular by a simple geometric computation over the residue field.

  Let $x$ and $y$ be the standard generators for $(a,b)$, so that
  $x^2=a$ and $y^2=b$, and write $a=ub^\eps$ with $u\in R^\times$ and
  $\eps\in\{0,1\}$.  Let $\tilde x=x/\sqrt b^\eps$ and $\tilde
  y=y/\sqrt b$; we have $\tilde x^2=u$ and $\tilde y^2=1$, which means
  that $\tilde x$ and $\tilde y$ generate an Azumaya algebra with
  generic fiber $(a,b)\tensor K(R)(\sqrt b)$.  A basis for $A$ as a
  free $R'$ module is given by $1,\tilde x,\tilde y,\tilde x\tilde y$;
  this also happens to be an eigenbasis for the action of $\m_2$.  Let
  $\chi^1$ denote the non-trivial character of $\m_2$ and $\chi^0$ the
  trivial character. The eigensheaf decomposition of $A$ corresponding
  to the basis can be written as
  $\chi^0\oplus\chi^1\oplus\chi^\eps\oplus\chi^{1+\eps}$, where the
  last sum is taken modulo $2$.  For either value of $\eps$ this is
  isomorphic to $\rho^{\oplus 2}$, as desired.

  Gluing the local models (as in Proposition \ref{lift starter}(1))
  produces a regular $\ms C$-twisted sheaf $V$ of rank $2$.  Since
  $\alpha$ is non-trivial, we see that this sheaf must be stable,
  hence geometrically semistable (in fact, geometrically polystable
  \cite{krashen_index_2008}).  If $[D\tensor\Q:\Q]$ is odd, then $V$
  is semistable with coprime rank and degree, hence geometrically
  stable.
\end{proof}

Proposition \ref{example point} has an amusing consequence.  The
starting point is the following (somewhat useful) lemma. Fix a place
$\nu$ of $\Q$.

\begin{lem} \label{points everywhere} If there is an object of $\ms{M}
  ^ {*} _ {\ms{C} / \Q} (2, L) ^ {ss} _ {\Q _ {\nu}}$ then there is an
  object of $\ms{M} ^ {*} _ {\ms{C} / \Q} (2, L) _ {\Q _ {\nu}}$.
\end{lem}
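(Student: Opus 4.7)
The plan is to split the argument according to whether $\alpha|_{\Q_\nu}$ vanishes in the Brauer group.

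First, suppose $\alpha|_{\Q_\nu} \neq 0$. I claim every regular $\ms C \tensor \Q_\nu$-twisted semistable sheaf $V$ of rank $2$ is automatically stable. If $V$ were strictly semistable, then saturating a maximally destabilizing rank $1$ subsheaf would yield a locally split rank $1$ subbundle $W \subset V$ with $\mu(W) = \mu(V)$. But such a $W$ is an invertible $\ms C \tensor \Q_\nu$-twisted sheaf, and its existence would trivialize the Brauer class $[\ms C \tensor \Q_\nu] = \alpha|_{\Q_\nu}$, contradicting the hypothesis. Hence $V$ is already stable, so the given object provides the desired one.

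Next, suppose $\alpha|_{\Q_\nu} = 0$. Choose an invertible $\ms C \tensor \Q_\nu$-twisted sheaf $\chi$. Since $\chi^{\tensor 2} \simeq \ms O$, twisting by $\chi^{-1}$ identifies regular $\ms C \tensor \Q_\nu$-twisted sheaves of rank $2$ and determinant $L$ with regular parabolic bundles with the same invariants on $\ms P \tensor \Q_\nu$, preserving both stability and semistability. The problem thus reduces to showing that if $\ms M^{*}_{\ms P \tensor \Q_\nu / \Q_\nu}(2, L)^{ss}$ has a $\Q_\nu$-object then so does the open substack $\ms M^{*}_{\ms P \tensor \Q_\nu / \Q_\nu}(2, L)$ of stable objects. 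Standard deformation theory on the one-dimensional stacky curve $\ms P$ makes the obstruction groups $\ext^2(V,V)$ vanish, so the semistable stack is smooth; the stable locus is a non-empty open substack by the unirationality/non-emptiness results of Section \ref{parabolic bundles}. I then take a smooth chart $U \to \ms M^{*}_{\ms P \tensor \Q_\nu / \Q_\nu}(2, L)^{ss}$ by a smooth $\Q_\nu$-scheme. Because the chart map is smooth, the given $\Q_\nu$-object lifts, locally in the $\nu$-adic topology, to a $\Q_\nu$-point of $U$, and the preimage $U^s$ of the stable locus is a non-empty Zariski open subset of $U$. Since $\Q_\nu$-points of a smooth $\Q_\nu$-variety containing a $\Q_\nu$-point are $\nu$-adically, hence Zariski, dense, $U^s(\Q_\nu)$ is non-empty, and any such point projects to a stable $\Q_\nu$-object of the stack.

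The main obstacle is Case 2: it depends on the non-emptiness of the geometric stable locus (which should follow from the arguments of Section \ref{parabolic bundles}, after a determinant-parity check) and on the ability to lift a $\Q_\nu$-object through a smooth chart — the latter being standard for smooth morphisms of $\Q_\nu$-analytic spaces. Case 1, by contrast, is purely formal and requires no density or approximation arguments.
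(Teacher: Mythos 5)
Your argument is correct in substance, and its second case is essentially the paper's proof: the paper runs a single argument for all $\nu$, taking an algebraization $Y$ of the versal deformation space of the given semistable object $V$, noting that $Y$ is smooth (deformations of bundles on a one-dimensional stack are unobstructed), that $\Q_\nu$-points are Zariski dense on a smooth $\Q_\nu$-variety with a rational point, and that the geometrically stable locus is open and dense in $Y$. Your Case 1 is a genuine addition: when $\alpha\tensor\Q_\nu(t)\neq 0$ a rank-$2$ twisted sheaf admits no invertible twisted subsheaf, so semistable already implies stable; this is the same observation the paper uses at the end of the proof of Proposition \ref{example point}, and it buys you a purely algebraic shortcut that sidesteps both the density of the stable locus and any approximation argument. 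Two small repairs to Case 2. First, $\chi^{\tensor 2}\simeq\ms O$ only when the class of the gerbe in $\H^2(\ms P\tensor\Q_\nu,\m_2)$ vanishes, not merely its Brauer class; in general $\chi^{\tensor 2}$ is the pullback of some invertible sheaf $N$, and twisting by $\chi^{-1}$ matches determinant $L$ with determinant $L\tensor N^{-1}$ --- harmless, but the determinant shifts. Second, for a smooth chart $U\to\ms M^{ss}$ chosen in advance, the fiber over your $\Q_\nu$-object is a smooth nonempty $\Q_\nu$-scheme that need not have a rational point, so the lift is not automatic; you should take $U$ to be the (algebraized) versal deformation space of the given object $V$ itself, which comes equipped with a $\Q_\nu$-point over $V$ --- exactly the paper's choice. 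Note also that both you and the paper assert, rather than prove, that the stable locus is dense in the relevant component (equivalently, that a strictly semistable regular parabolic bundle deforms to a stable one); you are at the same level of rigor as the source there.
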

\begin{proof}
  Let $V$ be a regular semistable $\ms{C} \tensor \Q _ {\nu}$-twisted
  sheaf of rank $2$ and determinant $L$, and let $Y$ be an
  algebraization of a versal deformation space of $V$. Since
  deformations of vector bundles on curves are unobstructed, we know
  that $Y$ is smooth over $\Q _ {\nu}$. On the other hand, we also
  know that the field $\Q _ {\nu}$ has the property that any smooth
  variety with a rational point has a Zariski-dense set of rational
  points. Finally, we know that the locus of geometrically stable
  $\ms{C} \tensor \Q _ {\nu}$-twisted sheaves is open and dense in $Y$. The
  result follows.
\end{proof}

In \cite{kunyavskiu_bicyclic_2006}, the authors produce examples of
biquaternion algebras $A$ over $\Q(t)$ of Faddeev index $4$ such that
for all places $\nu$ of $\Q$ the algebra $A\tensor\Q_\nu(t)$ has index
$2$.  They describe this as the failure of a sort of ``Hasse
principle''.  However, a careful examination of their examples shows
that in fact there is a local obstruction: for each class of such
algebras they write down, there is always a place $\nu$ over which
there is no regular $\ms C\tensor\Q_\nu$-twisted sheaf!  (Moreover,
the proofs that their examples work use this local failure in an
essential way, although the authors do not phrase things this way.)
Why doesn't this contradict Proposition \ref{example point}?  Because
for the place $\nu$ where things fail, one of the original
ramification sections ends up in the locus where the algebra
$A\tensor\Q_\nu$ is unramified, and now the condition that the sheaf
be regular around the stacky point over that section is non-trivial!

Let us give an explicit example. In Proposition 4.3 of
\cite{kunyavskiu_bicyclic_2006}, the authors show that the
biquaternion algebra $A = (17, t) \tensor (13, (t -1) (t -11))$ has
Faddeev index 4 while for all places $\nu$ of $\Q$ the algebra $A
\tensor \Q _ {\nu}$ has index 2. Consider $A _ {17} = A \tensor \Q _
{17}$; since 13 is a square in $\Q _ {17}$, we have that $A _ {17} =
(17, t)$ as $\Q _ {17}$-algebras. Elementary calculations show that
$(17, 1) = 0 \in \Br (\Q _ {17})$ and $(17, 11) \ne 0 \in \Br (\Q _
{17})$. It follows that any $\Q _ {17} (t)$-algebra 
Faddeev-equivalent to $A _ {17}$ has the property that precisely one
of its specializations at 1 and 11 will be nontrivial.

\begin{lem} \label{beef flank} Given a field $F$ and a nontrivial
  Brauer class $\gamma \in \Br (F) [2]$, let $\ms{G} \to \B \m _ 2
  \times \spec F$ be a $\m _ 2$-gerbe representing the pullback of
  $\gamma$ in $\Br (\B \m _ 2 \times \spec F)$. There is no regular
  $\ms{G}$-twisted sheaf of rank 2.
\end{lem}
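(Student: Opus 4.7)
The plan is to exploit the canonical $\m_2$-weight decomposition coming from the $\B\m_2$-factor and show that a regular rank-$2$ $\ms{G}$-twisted sheaf would produce an invertible twisted sheaf trivializing $\gamma$, contradicting $\gamma \ne 0$.

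First, I would pull $\ms{G}$ back along the canonical cover $\spec F \to \B\m_2 \times \spec F$ (the trivial $\m_2$-torsor) to obtain a $\m_2$-gerbe $\ms{G}_F \to \spec F$ whose Brauer class is $\gamma$. Since this cover is itself a $\m_2$-torsor, pullback identifies $\ms{G}$-twisted sheaves on $\ms{G}$ with $\m_2$-equivariant $\ms{G}_F$-twisted sheaves on $\spec F$. Every such sheaf decomposes canonically into its isotypic components for the two characters $\chi^0, \chi^1$ of $\m_2$, giving a decomposition $V = V_0 \oplus V_1$ in which each $V_i$, having pure weight, is an honest $\ms{G}_F$-twisted sheaf on $\spec F$ (the $\m_2$-equivariant data being tautological once the weight is fixed).

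Second, I would apply the regularity hypothesis: by definition $V \tensor_F \widebar F \cong \ms{L} \tensor \rho$ for some invertible $\ms{G}_F \tensor \widebar F$-twisted sheaf $\ms{L}$, where $\rho = \chi^0 \oplus \chi^1$ is the regular representation of $\m_2$. Because the weight decomposition is canonical and commutes with base change, this forces $V_i \tensor \widebar F \cong \ms{L} \tensor \chi^i$, so each $V_i$ is invertible as a $\ms{G}_F$-twisted sheaf over $F$. The existence of an invertible $\ms{G}_F$-twisted sheaf trivializes the $\m_2$-gerbe $\ms{G}_F$, and hence the Brauer class $\gamma = [\ms{G}_F]$, contradicting the hypothesis $\gamma \ne 0$.

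The main potential subtlety is checking that the weight decomposition really is defined over $F$ rather than merely over $\widebar F$, but this is automatic because the splitting comes from the canonical central action of $\m_2 \subset \ms{I}(\B\m_2 \times \spec F)$, which is defined over $F$ and respected by every morphism of $\ms{G}$-twisted sheaves.
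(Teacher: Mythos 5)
Your proof is correct and follows essentially the same route as the paper's: both arguments decompose a putative regular rank-$2$ sheaf into its eigensheaves for the $\m_2$ coming from the inertia of $\B\m_2$ (you phrase this via descent to $\m_2$-equivariant $\ms{G}_F$-twisted sheaves, the paper via the second factor of $\ms I(\ms G)\cong\m_2\times\m_2$), observe that regularity forces both pieces to be nonzero of rank $1$, and conclude that an invertible twisted sheaf would trivialize the nontrivial class $\gamma$.
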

\begin{proof}
  The inertia stack of $\ms{G}$ is isomorphic to $\m _ 2 \times \m _
  2$, where the first factor comes from the gerbe structure and the
  second factor comes from the inertia of $\B \m _ 2$. Given a
  $\ms{G}$-twisted sheaf $F$, the eigendecomposition of $F$ with
  respect to the action of the second factor gives rise to
  $\ms{G}$-twisted subsheaves of $F$. Since $\ms{G}$ has period 2, if
  $\rk F = 2$ there can be no proper twisted subsheaves.
\end{proof}

\begin{cor} \label{almond paste} In the example above, all of the sets
  $M ^ {*} _ {\ms{C} / \Q} (2, L) (\Q _ {17})$ are empty.
\end{cor}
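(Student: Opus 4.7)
The plan is to derive a contradiction from a hypothetical rational point of $M^\ast_{\ms C/\Q}(2,L)$ over $\Q_{17}$ by finding a local obstruction at one of the residual gerbes $\xi_1$ or $\xi_{11}$, then invoking Lemma \ref{beef flank}. The key arithmetic input is that, since $13$ is a square in $\Q_{17}$, the Brauer class of $\ms C_{\Q_{17}}$ on $\P^1_{\Q_{17}}$ is represented by $(17,t)$, which is unramified at $t=1$ and $t=11$ with specializations $(17,1)=0$ and $(17,11)\neq 0$.

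Suppose $P\in M^\ast_{\ms C/\Q}(2,L)(\Q_{17})$ for some $L$. Pullback of the $\m_2$-gerbe $\ms M^\ast_{\ms C_{\Q_{17}}/\Q_{17}}(2,L)\to M^\ast_{\ms C_{\Q_{17}}/\Q_{17}}(2,L)$ along $P$ produces an obstruction class $\gamma\in\Br(\Q_{17})[2]$. Letting $\ms G\to\spec\Q_{17}$ be a $\m_2$-gerbe of class $-\gamma$, the coarse-moduli construction of Proposition \ref{example point} (compare \cite{krashen_index_2008}) identifies $M^\ast_{(\ms C\wedge(\ms G\times\ms P))/\Q_{17}}(2,L)$ with $M^\ast_{\ms C/\Q}(2,L)_{\Q_{17}}$ in such a way that $P$ now lifts to a stable regular $\ms C_{\Q_{17}}\wedge(\ms G\times_{\spec\Q_{17}}\ms P_{\Q_{17}})$-twisted sheaf $V$ of rank $2$. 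The Brauer class of this new gerbe restricted to the residual gerbe $\xi_{c,\Q_{17}}\cong\B\m_2\times\spec\Q_{17}$ for $c\in\{1,11\}$ is the pullback along the structure map $\B\m_2\times\spec\Q_{17}\to\spec\Q_{17}$ of $\alpha_{17}(c)-\gamma$, because $\alpha_{17}$ is unramified at $c$. The two classes $-\gamma$ and $(17,11)-\gamma$ differ by $(17,11)\neq 0$, so at least one is nonzero; for such a choice of $c$, the restriction $V|_{\xi_{c,\Q_{17}}}$ is a regular rank-$2$ twisted sheaf on a gerbe to which Lemma \ref{beef flank} applies, giving the desired contradiction.

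The main obstacle I anticipate is the identification, for $c\in\{1,11\}$, of the restriction of the Brauer class of $\ms C_{\Q_{17}}$ to the residual gerbe $\xi_{c,\Q_{17}}$ with the pullback of the specialization $\alpha_{17}(c)\in\Br(\Q_{17})$. Since $\alpha_{17}$ is unramified at $c$, it extends to a class over the strict henselization of $\P^1_{\Q_{17}}$ at $c$, and the root-stack construction over an unramified point of the original Brauer class is canonically split; matching cocycles between this extension of $\alpha_{17}$ and the chosen extension $\alpha'_{17}$ across the root-stack neighborhood of $c$ should carry out the identification, using that $\H^1(\B\m_2\times\spec\Q_{17},\m_2)\to\H^1(\spec\Q_{17},\m_2)$ splits and that the character contributions are absorbed into the choice of invertible twisted sheaf in the definition of regularity.
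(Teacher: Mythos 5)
Your proposal is correct and follows essentially the same route as the paper: lift the hypothetical $\Q_{17}$-point to a regular rank-$2$ twisted sheaf after twisting by the obstruction class (i.e.\ passing to a Faddeev-equivalent algebra), observe that since $(17,1)=0$ and $(17,11)\neq 0$ differ by a nonzero class the restriction to at least one of the residual gerbes over $t=1$, $t=11$ carries a nontrivial class pulled back from $\Br(\Q_{17})$, and conclude by Lemma \ref{beef flank}. Your version is in fact more explicit than the paper's about the identification of the residual-gerbe restrictions with the specializations $\alpha_{17}(c)$, which the paper leaves to ``the remarks immediately preceding'' the lemma.
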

\begin{proof}
  Suppose $Q \in M ^ {*} _ {\ms{C} / \Q} (2, L) (\Q _ {17})$. As
  above, after replacing $A _ {17}$ by a Faddeev-equivalent algebra,
  we can that $Q$ to an object of $\ms{M} ^ {*} _ {\ms{C} / \Q} (2, L)
  _ {\Q _ {17}}$. In other words, there would be a regular (stable!)
  $\ms{C} \tensor \Q _ {17}$-twisted sheaf of rank 2. But this
  contradicts Lemma \ref{beef flank} and the remarks immediately
  preceding it.
\end{proof}

Combining Proposition \ref{example point} and Corollary \ref{almond
  paste}, we see that the failure of the ``Hasse principle'' to which
the authors of \cite{kunyavskiu_bicyclic_2006} refer in relation to
the algebra $A$ is in fact a failure of the existence of a \emph
{local} point in the associated moduli problem! This is in great
contrast to the unramified case, which we saw above was directly
related to the Hasse principle. It is somewhat disappointing that the
examples we actually have of classes over arithmetic surfaces whose
period and index are distinct cannot be directly related to the Hasse
principle (except insofar as both sets under consideration are
empty!).

There is one mildly interesting question which arises out of this failure.

\begin{prop} \label{complicated} If Conjecture \ref{French conjecture
    Junior} is true then any element $\alpha \in \Br (\Q (t)) [2]$
  such that
\begin{enumerate}
\item the ramification of $\alpha$ is a simple normal crossings
  divisor $D = D _ 1+ \dots + D _ r$ in $\P ^ 1 _ \Z$ which is a union
  of fibers and an odd number of sections of $\P ^ 1 _ \Z \to \spec
  \Z$,
\item for every crossing point $p \in D _ i \cap D _ j$, both
  ramification extensions are either split, non-split, or ramified at
  $p$, and
\item all points $d$ of $D \times _ {\P ^ 1 _ \Z} \P ^ 1 _ \R$ which
  are not ramification divisors of $\alpha _ {\R (t)}$ give rise to
  the same element $(\alpha _ {\R (t)}) _ d \in \Br (\R)$
\end{enumerate}
satisfies $\per (\alpha) = \ind (\alpha)$.
\end{prop}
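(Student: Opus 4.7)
The plan is to mirror the strategy of Theorem \ref{Hasse principle period-index theorem} in the ramified setting, using Proposition \ref{example point} as the bridge between rational points on moduli and the period-index equality. First choose a $\m_2$-gerbe $\ms C \to \ms P$ representing $\alpha$, where $\ms P \to \P^1_\Q$ is the stacky double cover branched over $D_\Q$. By hypothesis (1), $D_\Q$ is an odd number of $\Q$-rational points of $\P^1$ (the vertical components of $D$ contribute nothing to $D\tensor\Q$), so $[D\tensor\Q:\Q]$ is odd and Proposition \ref{example point} reduces us to exhibiting a $\Q$-rational point on $M := M^\ast_{\ms C/\Q}(2,L)$ for some $L$ of half-integral degree. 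Such $M$ is smooth and projective, and geometrically unirational (hence, in this rank-$2$ parabolic setting on $\P^1$, geometrically rational with geometric Picard group $\Z$) by Corollary \ref{parabolic corollary} plus a standard Picard computation for the parabolic moduli on $\P^1$. Conjecture \ref{French conjecture Junior} therefore applies, and the problem reduces to producing local points $M(\Q_\nu)\neq\emptyset$ at every place $\nu$ of $\Q$.

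The heart of the proof is this local-point verification, and it is here that hypotheses (2) and (3) enter. For a finite prime $p$ over which $D$ has no vertical component, the mod-$p$ argument in Theorem \ref{Hasse principle period-index theorem} applies verbatim: reduce to $\F_p$, apply Esnault's theorem on the rationally connected special fiber, and Hensel-lift using the smoothness of $\ms M^\ast$. At a finite prime $p$ where $D$ picks up a vertical component, the local picture on $\ms P\tensor\Z_p$ exhibits simple normal crossings between a horizontal and a vertical ramification divisor of $\alpha$; hypothesis (2) -- simultaneous splitting, non-splitting, or ramification of both extensions at each crossing -- is precisely the combinatorial condition needed so that the explicit $\m_2$-equivariant symbol-algebra construction in the proof of Proposition \ref{example point} extends across the crossing point and glues with the unramified regular models on the rest of $\ms P\tensor\Z_p$. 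At the real place, hypothesis (3) pins down a single constant $c \in \Br(\R)$ as the value of $\alpha_{\R(t)}$ at every non-ramification point of $D\times_{\P^1_\Z}\P^1_\R$; twisting $\ms C\tensor\R$ by a suitable invertible twisted sheaf absorbs $c$ and reduces the problem to an unobstructed regular twisted extension, after which the archimedean analogue of Lemma \ref{points everywhere} yields a point in $M(\R)$.

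With local points established, Conjecture \ref{French conjecture Junior} gives a $0$-cycle of degree $1$ on $M$, and Proposition \ref{example point} promotes this to Faddeev index $2$ for $\alpha$. To upgrade Faddeev index to actual index we imitate the closing Brauer-vanishing argument of Theorem \ref{Hasse principle period-index theorem}: the $\m_2$-gerbe $\ms M^\ast_{\ms C/\Q}(2,L) \to M$ has Brauer class $\beta \in \Br(M)$, and because $M$ is smooth, geometrically rational, with geometric Picard group $\Z$, Leray forces $\Br(M)/\Br(\Q) = 0$, so $\beta$ is pulled back from $\Br(\Q)$. The local regular $\ms C\tensor\Q_\nu$-twisted sheaves already built in the previous paragraph witness $\beta_{\Q_\nu} = 0$ at every place, and class field theory then gives $\beta = 0$. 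Thus the $0$-cycle lifts to a $\ms C$-twisted complex of rank $2$ on $\ms P$, whose restriction to the generic fiber furnishes a degree-$2$ division algebra in Brauer class $\alpha$, proving $\ind(\alpha)=2=\per(\alpha)$.

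The main obstacle is the local construction at the bad finite primes and at the real place: conditions (2) and (3) are engineered so that the $\m_2$-equivariant Azumaya algebra assembled in Proposition \ref{example point} generalizes from a smooth ramification divisor to a simple-normal-crossings configuration and is compatible with the real structure. Verifying in detail that these hypotheses suffice for the explicit symbol construction -- especially at crossings where one component is horizontal and the other vertical -- is the delicate technical point, and it is presumably here that any additional hypotheses would have to be added if the present list proved insufficient. The rest of the argument is a direct transcription of the proof of Theorem \ref{Hasse principle period-index theorem} with $\ms P$ replacing the smooth curve $C$.
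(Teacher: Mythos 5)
First, a caveat about the comparison: the paper does not actually prove Proposition \ref{complicated}. It says only that the proof ``uses Proposition \ref{lift starter}(2) as a starting point for a deformation problem,'' is ``similar in spirit'' to Theorem \ref{Hasse principle period-index theorem}, and is omitted. Your outline is faithful to that spirit (local points at every place, then the Hasse principle for $0$-cycles, then a lifting/semisimplicity argument via Proposition \ref{example point}), but two of your steps contain genuine gaps rather than omitted routine details.

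The first gap is your claim that $M^\ast_{\ms C/\Q}(2,L)$ is geometrically rational with geometric Picard group $\Z$, so that Conjecture \ref{French conjecture Junior} applies as stated and ``Leray forces $\Br(M)/\Br(\Q)=0$.'' The paper establishes only geometric (separable) unirationality (Corollary \ref{parabolic corollary}), and for moduli of rank-$2$ parabolic bundles on $\P^1$ with $r$ marked points the geometric Picard group generically has rank greater than $1$ (for $r=5$ the space is a quartic del Pezzo surface); the paper itself lists ``What is the Brauer--Manin obstruction for $M^\ast_{\ms P/k}(2,L)$?'' among its open questions, which it would not do if $\Br(M)/\Br(\Q)$ were known to vanish. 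Without $\Pic=\Z$, both your appeal to Conjecture \ref{French conjecture Junior} and your closing argument that $\beta$ is pulled back from $\Br(\Q)$ and killed by class field theory are unsupported. The second gap is the local construction at finite places, which is the real content of the proposition and which you largely assert. Even at primes where $D$ has no vertical component, $\alpha$ remains ramified along the horizontal components after reduction, so the argument of Theorem \ref{Hasse principle period-index theorem} does not apply ``verbatim'': one cannot reduce to untwisted sheaves on the special fibre and quote Esnault directly. The intended tool is Proposition \ref{lift starter}(2): construct a regular twisted sheaf of rank $2$ with prescribed determinant over the finite residue field by gluing local models at the residual gerbes (this is where hypothesis (2), the no-hot-points condition, guarantees such local models exist) to a generic twisted sheaf, correct the determinant by an elementary transformation located via Lang--Weil, and then deform over the complete local ring using unobstructedness. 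Your proposal never invokes Proposition \ref{lift starter}, and the sentence asserting that hypothesis (2) ``is precisely the combinatorial condition needed'' restates the goal rather than proving it.
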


The second condition of the proposition is almost equivalent to the
statement that the restriction of $\alpha$ to $\Q _ {\nu} (t)$ has no
hot points (in the sense of Saltman) on $\P ^ 1 _ {\Z _ {\nu}}$; it is
not quite equivalent because Saltman's hot points are all required to
lie on intersections of ramification divisors, while some of the
ramification divisors of $\alpha$ may no longer be in the ramification
divisor of $\alpha _ {\Q _ {\nu} (t)}$. The proof of Proposition
\ref{complicated} uses Proposition \ref{lift starter} (2) as a
starting point for a deformation problem. It is similar in spirit to
the proof of Theorem \ref{Hasse principle period-index theorem} and
will be omitted.

\section {A list of questions}

We record several questions arising from the preceding discussion.
Let $C$ be a curve over a field $k$.

\begin{enumerate}
\item Are there biquaternion algebras in $\Br_\infty(\Q(t))$ of
  Faddeev index $4$ with non-hot secondary ramification (in the sense
  of Proposition \ref{complicated})?  For example, let $p$ be a prime
  congruent to $1$ modulo $3\cdot 4\cdot 7\cdot 13\cdot 17$ and
  congruent to $2$ modulo $5$.  What is the Faddeev index of the
  algebra $(p,t)\tensor(13,15(t-1)(t+13))$?
\item What is the Brauer-Manin obstruction for $M^\ast_{\ms
    P/k}(2,L)$?  If algebras as in the first question exist, is the
  resulting failure of the Hasse principle explained by the
  Brauer-Manin obstruction?
\item Let $\ms C\to C$ be a $\m_n$-gerbe.  What is the index of the
  Brauer class $\ms M_{\ms C/k}(n,\ms O)\to M_{\ms C/k}(n,\ms O)$?  If
  $C$ has a rational point, this index must divide $n$.  More
  generally, it must divide $n\ind(C)$.  Is this sharp?
\item Does every fiber of $M_{C/k}(n,L)\to\Pic^d_{C/k}$ over a
  rational point contain a rational point?  This is (indirectly)
  related to the index-reduction problem.  It is not too hard to see
  that if the rational point of $\Pic^d_{C/k}$ comes from an
  invertible sheaf then there is always a rational point in the fiber.
\item Suppose $k$ is a global field.  Let $\mc C\to S$ be a regular
  proper model of $C$.  Is there a class $\alpha\in\Br_\infty(\mc C)$ such that
  $\per(\alpha)\neq\ind(\alpha)$?  When $k$ has positive
  characteristic and the period of $\alpha$ is invertible in $k$, this
  is impossible \cite{lieblich_twisted_2008}.  The existence of such a
  class would disprove Conjecture \ref{French conjecture Junior} (even
  the strong form).

\end{enumerate}

\appendix

\section {Gerbes, twisted sheaves, and their moduli}

In this appendix, we remind the reader of the basic facts about
twisted sheaves, their moduli, and their applications to the Brauer
group. For more comprehensive references, the reader can consult 
\cite{lieblich_moduli_2007,lieblich_twisted_2008}. The basic setup
will be the following: let $\ms{X} \to X \to S$ be a $\m _ n$-gerbe on
a proper flat morphism of finite presentation. We assume $n$ is
invertible on $S$. At various points in this appendix, we will impose
conditions on the morphism.

We first recall what it means for $\ms{X} \to X$ to be a $\m _ n$-gerbe.
\begin{defn}
  A $\m _ n$-gerbe is an $S$-stack $\ms{Y}$ along with an isomorphism
  $\m _ {n, \ms{Y}} \to \ms{I} (\ms{Y})$.  We say that $\ms{Y}$ is a
  $\m _ n$-gerbe on $Y$ if there is a morphism $\ms{Y} \to Y$ such
  that the natural map $\sh (\ms{Y}) \to Y$ is an isomorphism, where
  $\sh (\ms{Y})$ denotes the sheafification of $\ms{Y}$ on the big
  \'etale site of $S$.
\end{defn}

Because $\ms{X}$ is a $\m _ n$-gerbe, any quasi-coherent sheaf
$\ms{F}$ on $\ms{X}$ admits a decomposition $\ms{F} = \ms{F} _ 0
\oplus \dots \oplus \ms{F} _ {n -1}$ into eigensheaves, where the
natural (left) action of the stabilizer on $\ms{F} _ i$ is via the
$i$th power map.

\begin{defn}
  An $\ms{X}$-twisted sheaf is a sheaf $\ms{F}$ of $\ms{O} _
  {\ms{X}}$-modules such that the natural left action (induced by
  inertia) $\m _ n \times \ms{F} \to \ms{F}$ is equal to scalar
  multiplication.
\end{defn}

\subsection {Moduli}

It is a standard fact (see, for example,
\cite{lieblich_remarksstack_2006}) that the stack of flat families of
quasi-coherent $\ms{X}$-twisted sheaves of finite presentation is an
algebraic stack locally of finite presentation over $S$. For the
purposes of this paper, we will focus on only a single case, where we
will consider stability of twisted sheaves. \emph {From now on, we
  assume that $S = \spec k$ with $k$ a field, and $X$ is a proper
  smooth curve over $k$.}

To define stability, we need a notion of degree for invertible
$\ms{X}$-twisted sheaves. For the sake of simplicity, we give an ad hoc
definition. Given an invertible $\ms{X}$-twisted sheaf $L$, we note
that $L ^ {\tensor n}$ is the pullback of a unique invertible sheaf $L
'$ on $X$. We can thus define $\deg L$ to be $\frac {1} {n} \deg L
'$. With this definition, we can define the degree of a locally free
$\ms{X}$-twisted sheaf $V$ as $\deg V = \deg \det V$. With this
notion of degree, we can define stability.

\begin{defn} \label{slope}
The \emph {slope} of a non-zero locally free $\ms{X}$-twisted sheaf $V$ is
$$
\mu (V) = \frac {\deg V} {\rk V}.
$$
\end{defn}

\begin{defn} \label{stability} A locally free $\ms{X}$-twisted sheaf
  $V$ is \emph {stable} if for all proper locally split subsheaves $W \subset V$
  we have
$$
\mu (W) < \mu (V).
$$
The sheaf is \emph{semistable} of for all proper locally split
subsheaves $W\subset V$ we have
$$
\mu(W)\leq\mu(V).
$$
\end{defn}
By ``locally split'' we mean that there is a faithfully flat map
$Z\to\ms X$ such that there is a retraction of the inclusion
$W_Z\subset V_Z$.  (I.e., $W$ is locally a direct summand of $V$.)  As
in the classical case, the stack of stable sheaves is a $\G_m$-gerbe
over an algebraic space. If in addition we fix a determinant, then the
resulting stack is a $\m _ r$-gerbe over an algebraic space, where $r$
is the rank of the sheaves in question.

\begin{notn}
  Given an invertible sheaf $L$ on $X$, the stack of stable (resp.\
  semistable) $\ms{X}$-twisted sheaves of rank $n$ and determinant $L$
  will be denoted $\ms{M} _ {\ms{X} / k} (n, L)$ (resp.\ $\ms M_{\ms
    X/k}(n,L)^{ss}$). The coarse moduli space (which in the stable case is
  also the sheafification) will be denoted $M _ {\ms{X} / k} (n, L)$
  (resp.\ $M_{\ms X/k}(n,L)^{ss}$).

  Given an integer $d$, the stack of stable 
  $\ms{X}$-twisted sheaves of rank $n$ and degree $d$ will be denoted
  $\ms{M} _ {\ms{X} / k} (n, d)$, and its coarse moduli space will be
  denoted $M _ {\ms{X} / k} (n, d)$.
\end{notn}

As mentioned above, $\ms{M} _ {\ms{X} / k} (n, d) \to M _ {\ms{X} / k}
(n, d)$ is a $\G_m$-gerbe; similarly, $\ms{M} _ {\ms{X} / k} (n, L)
\to M _ {\ms{X} / k} (n, L)$ is a $\m _ n$-gerbe. In fact, the stack
$\ms{M} _ {\ms{X} / k} (n, L)$ is the stack theoretic fiber of the
determinant morphism $\ms{M} _ {\ms{X} / k} (n, d) \to \sPic _ {X / k}
^ d$ over the morphism $\spec k \to \sPic ^ d _ {X / k}$ corresponding
to $L$. The comparison results described in Section \ref{Torelli
  section}, combined with classical results on the stack of stable
sheaves on a curve, show that $\ms{M} _ {\ms{X} / k} (n, d)$ is of
finite type over $k$; if we assume that the class $\ms{X}$ is zero in
$\H ^ 2 (X \tensor \widebar {k}, \m _ n)$ we know that $\ms{M} _
{\ms{X} / k} (n, d)$ is quasi-proper (i.e., satisfies existence part
of the valuative criterion of properness) whenever $n$ and $d$ are
relatively prime.

\subsection {Deformation theory}\label{defmn}

As $\ms O$-modules on a ringed topos (the \'etale topos of $\ms X$),
$\ms X$-twisted sheaves are susceptible to the usual deformation
theory of Illusie.  The following theorem summarizes the consequences
of this fact.  Suppose $S=\spec A$ is affine, $I\to\widetilde A\to A$
is a small extension (so that the kernel $I$ is an $A$-module),
$X/\widetilde A$ is flat, $\ms X\to X$ is a $\m_n$-gerbe with $n$
invertible in $A$, and $\ms F$ is an $A$-flat family of quasi-coherent
$\ms X\tensor_{\widetilde A}A$-twisted sheaves of finite presentation.

\begin{thm} \label{deformation theory} There is an element $\mf
  o\subset\ext^2(\ms F,\ms F_A\tensor I)$ such that
\begin{enumerate}
\item $\mf o=0$ if and only if there is an $\widetilde A$-flat
  quasi-coherent $\ms X$-twisted sheaf $\widetilde{\ms F}$ and an
  isomorphism $\widetilde{\ms F}\tensor_{\widetilde A}A\simto\ms F$;
\item if such an extension exists, the set of such extensions is a
  torsor under $\ext^1(\ms F,\ms F\tensor I)$;
\item given one such extension, the group of automorphisms of
  $\widetilde F$ which reduce to the identity on $\ms F$ is identified
  with $\Hom(\ms F,\ms F\tensor I)$.
\end{enumerate}
\end{thm}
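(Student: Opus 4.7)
The plan is to realize this as an instance of Illusie's deformation theory for modules on a ringed topos, and then to verify that the $\m_n$-twisted condition is automatically preserved under the relevant constructions.

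First, I would set the stage topos-theoretically. Since $X$ is flat over $\widetilde A$ and $\ms X\to X$ is a $\m_n$-gerbe with $n$ invertible, hence smooth and faithfully flat, the structure sheaf $\ms O_{\ms X}$ is flat over $\widetilde A$ and satisfies $\ms O_{\ms X}\tensor_{\widetilde A}A=\ms O_{\ms X\tensor_{\widetilde A}A}$. Tensoring the small extension $I\to\widetilde A\to A$ with $\ms O_{\ms X}$ yields a short exact sequence $0\to\ms O_{\ms X}\tensor I\to\ms O_{\ms X}\to\ms O_{\ms X\tensor A}\to 0$ on the \'etale topos of $\ms X$, which is exactly the input for Illusie's machinery.

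Second, I would invoke the standard statement: for a flat, finitely presented $\ms O_{\ms X\tensor A}$-module $\ms F$, Illusie's theory assigns a canonical obstruction $\mf o\in\ext^2(\ms F,\ms F\tensor I)$ whose vanishing is equivalent to the existence of an $\widetilde A$-flat $\ms O_{\ms X}$-module $\widetilde{\ms F}$ extending $\ms F$; once one such extension is chosen, isomorphism classes of extensions form a torsor under $\ext^1(\ms F,\ms F\tensor I)$; and the automorphism group of a given extension that restricts to the identity on $\ms F$ is canonically $\Hom(\ms F,\ms F\tensor I)$. Granting this, (1)--(3) follow immediately, provided we know that we are really producing $\ms X$-twisted objects and that the Ext groups on the ringed topos agree with those in the twisted category.

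Third, I would establish both compatibilities using the $\m_n$-eigensheaf decomposition $\ms G=\bigoplus_{i=0}^{n-1}\ms G_i$ of quasi-coherent sheaves on $\ms X$. This decomposition is functorial and commutes with flat base change, because the projector onto each eigen-summand is a polynomial in the $\m_n$-action with coefficients in $\Z[1/n]$, and $n$ is invertible by hypothesis. Applied to any flat lift $\widetilde{\ms F}$, each summand $\widetilde{\ms F}_i$ is $\widetilde A$-flat with $\widetilde{\ms F}_i\tensor_{\widetilde A}A=\ms F_i$; since $\ms F$ is $\ms X$-twisted, $\ms F_i=0$ for $i\neq 1$, and flatness plus nilpotence of $I$ force $\widetilde{\ms F}_i=0$ for $i\neq 1$ via Nakayama. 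Thus every flat lift is automatically $\ms X$-twisted. The same eigen-splitting shows that any $\ms O_{\ms X}$-linear map between two $\ms X$-twisted sheaves is $\m_n$-equivariant, so computing $\Hom$ and $\ext$ on the ringed topos agrees with computing them in the twisted category. The only step demanding real care is the base-change compatibility of the eigen-decomposition; this is exactly where invertibility of $n$ enters, and once that is in hand the theorem falls out by direct specialization of Illusie's general result.
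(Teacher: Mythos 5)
Your proposal is correct and follows exactly the route the paper takes: the paper offers no proof beyond the remark that $\ms X$-twisted sheaves are $\ms O$-modules on the ringed (\'etale) topos of $\ms X$ and hence fall under Illusie's general deformation theory, which is precisely your second step. Your third step --- using the $\m_n$-eigensheaf decomposition to check that flat lifts of twisted sheaves are automatically twisted and that the relevant $\Hom$ and $\ext$ groups agree with those of the ambient module category --- is the right way to fill in the compatibility the paper leaves implicit.
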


\begin{cor}
  If $X/S$ is a relative curve, then the stack of locally free $\ms
  X$-twisted sheaves is smooth over $S$.
\end{cor}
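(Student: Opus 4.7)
The plan is to verify the formal lifting criterion for smoothness using Theorem \ref{deformation theory} and the vanishing of high-degree cohomology on a curve. Since the stack of locally free $\ms X$-twisted sheaves is locally of finite presentation over $S$ (by standard limit arguments for sheaves of finite presentation), it suffices to show formal smoothness.

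Concretely, given a small extension $\widetilde A \to A$ of Artinian local rings with kernel $I$, an $A$-point of $S$, and a locally free $\ms X \tensor_{\widetilde A} A$-twisted sheaf $\ms F$, I need to produce an $\widetilde A$-flat lift $\widetilde {\ms F}$. By Theorem \ref{deformation theory}(1), the obstruction lies in $\ext^2_{\ms X \tensor_{\widetilde A} A}(\ms F, \ms F \tensor I)$. So it is enough to show that this group vanishes.

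Because $\ms F$ is locally free, $\rshom(\ms F, \ms F \tensor I) = \shom(\ms F, \ms F \tensor I) = \ms F^\vee \tensor \ms F \tensor I$, which is a weight-$0$ quasi-coherent sheaf on the $\m_n$-gerbe $\ms X \tensor_{\widetilde A} A$. Since $n$ is invertible on $S$ and $\ms X \to X$ is a $\m_n$-gerbe, taking $\m_n$-invariants is exact on quasi-coherent sheaves, and the weight-$0$ part of $\ms F^\vee \tensor \ms F \tensor I$ descends canonically to a quasi-coherent sheaf $\ms G$ on $X_A$ with $\H^i(\ms X_A, \ms F^\vee \tensor \ms F \tensor I) = \H^i(X_A, \ms G)$.

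Now I invoke the fact that $X \to S$ has relative dimension $1$ and is proper: for any quasi-coherent sheaf $\ms G$ of finite presentation on $X_A$, Grothendieck's vanishing theorem for proper morphisms (applied to the maximum fibral dimension) gives $\H^i(X_A, \ms G) = 0$ for $i \geq 2$. Hence $\ext^2(\ms F, \ms F \tensor I) = 0$, the obstruction vanishes, and a lift $\widetilde{\ms F}$ exists. This establishes formal smoothness, and together with local finite presentation yields smoothness over $S$. The only genuine subtlety is the identification of Ext on the gerbe with cohomology on the curve, which rests on the invertibility of $n$ and the weight-$0$ nature of $\shom(\ms F, \ms F \tensor I)$; everything else is a straightforward application of the lifting criterion.
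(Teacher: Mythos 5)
Your proof is correct and is precisely the argument the paper leaves implicit: the corollary is stated as an immediate consequence of Theorem \ref{deformation theory}, with the obstruction group $\ext^2(\ms F,\ms F\tensor I)$ identified (via local freeness and descent of the weight-zero sheaf $\ms F^\vee\tensor\ms F\tensor I$ to the coarse curve) with an $\H^2$ that vanishes by relative dimension one. Nothing to add.
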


Deformation theory can also be used to construct global objects from
local data.  The key technical tool is the following; see
\cite{lieblich_period_2007} for a more detailed proof and further references.

\begin{prop} \label{lift starter} Let $\ms P$ be a separated tame
  Artin stack of finite type over $k$ which is pure of dimension $1$.
  Let $P$ be the coarse moduli space of $\ms P$ and let $\ms C\to\ms
  P$ be a $\m_n$-gerbe.  Suppose $\ms P$ is regular away from the
  (finitely many) closed residual gerbes $\xi_1,\ldots,\xi_r$.
  Suppose the map $\Pic(\ms P)\to\prod\Pic(\xi_i)$ has kernel
  generated by the image of $\Pic(P)$ under pullback.
\begin{enumerate}
\item Given locally free $\ms C_{\xi_i}$-twisted sheaves $V_i$ of rank
  $m$, $i=1,\dots,r$, and locally free $\ms C_\eta$-twisted sheaf
  $V_\eta$ of rank $m$, there is a locally free $\ms C$-twisted sheaf
  $W$ of rank $m$ such that $W_\eta\cong V_\eta$ and $W_{\xi_i}\cong
  V_i$.
\item Suppose that $k$ is finite, $P\cong\P^1$, $\ms P\to P$ is
  generically a $\m_n$-gerbe, and the ramification extension $R\to P$
  of $\ms C$ (see Proposition \ref{intrinsic ramification} below) is
  geometrically connected.  Given an invertible sheaf $L\in\Pic(\ms
  P)$, if there are $\ms C_{\xi_i}$-twisted sheaves $V_i$ of rank $m$
  such that $\det V_i\cong L_{\xi_i}$ and a $\ms C_\eta$-twisted sheaf
  $V_\eta$ of rank $m$, then there is a locally free $\ms C$-twisted
  sheaf $V$ of rank $m$ such that $\det V\cong L$.
\end{enumerate}
\end{prop}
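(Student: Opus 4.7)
My approach is to handle both parts by local-to-global assembly: produce formal or henselian extensions of each $V_i$ near $\xi_i$, match them to the generic datum $V_\eta$ on the intersection, and glue via a Beauville--Laszlo style theorem. Part (2) is then obtained by running part (1) after first adjusting the pieces so that the resulting determinant matches $L$, using the finite-field and connectedness hypotheses to produce the required correcting line bundle.

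\textbf{Part (1).} Let $\ms P_i^h$ denote the henselization of $\ms P$ at $\xi_i$ and set $\ms C_i^h := \ms C\times_{\ms P}\ms P_i^h$. Since $\ms P$ is pure of dimension one and $\ms P_i^h$ is local, the obstruction groups $\ext^2(V_i,V_i\tensor\mf m^n/\mf m^{n+1})$ appearing in Theorem \ref{deformation theory} vanish for all $n\geq 1$; iteratively solving deformation problems and algebraizing (the residual gerbe $\xi_i$ is proper) produces a locally free $\ms C_i^h$-twisted sheaf $\widetilde V_i$ of rank $m$ restricting to $V_i$ at the closed point. Its restriction to the generic point of $\ms P_i^h$ is a rank-$m$ locally free twisted sheaf over a field; since the category of twisted sheaves over a field is semisimple (equivalent to modules over a division algebra of index dividing $n$), this restriction is isomorphic to the pullback of $V_\eta$, and I fix such an isomorphism. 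A formal/Beauville--Laszlo glueing theorem for tame Artin stacks, applied to the open cover of $\ms P$ by $\ms P\setminus\{\xi_1,\dots,\xi_r\}$ (on which I use $V_\eta$ spread out as a locally free twisted sheaf, using $\ext^2=0$ again over that regular open) and the henselizations $\ms P_i^h$, assembles a global locally free $\ms C$-twisted sheaf $W$ of rank $m$ with the requested restrictions.

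\textbf{Part (2).} Apply part (1) to obtain some $W_0$ with $W_{0,\xi_i}\cong V_i$ and $W_{0,\eta}\cong V_\eta$. Consider the difference $\Delta:=\det W_0\tensor L^{-1}\in\Pic(\ms P)$. By hypothesis $\det V_i\cong L|_{\xi_i}$, so $\Delta|_{\xi_i}$ is trivial for every $i$; the kernel hypothesis then forces $\Delta=\pi^\ast M$ for some $M\in\Pic(P)=\Pic(\P^1)=\Z$, say $M=\ms O(d)$. The plan is to eliminate $d$ by modifying the glueing input. Because $R\to P=\P^1$ is a finite geometrically connected cover over the finite field $k$, Lang's theorem applied to $\Pic_{R/k}$ furnishes invertible sheaves on $R$ of arbitrary degree; pushing forward and combining with pullback from $P$ produces invertible sheaves on $\ms P$ whose degrees span a suitable fractional subgroup of $\Q$. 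Using one such sheaf to twist $V_\eta$, and compensating by twisting each $V_i$ by invertible sheaves on $\xi_i$ whose combined degrees absorb the discrepancy (the freedom to choose these exists precisely because the kernel of $\Pic(\ms P)\to\prod\Pic(\xi_i)$ is controlled by $\Pic(P)$), I can arrange that the output of part (1) has determinant equal to $L$ on the nose.

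\textbf{Main obstacle.} The analytic heart of part (1) is the glueing step: patching a formal/henselian twisted sheaf to a coherent one on a Zariski open requires a version of Beauville--Laszlo adapted to tame Artin stacks, which I would either cite or verify by descending to the coarse space after averaging over inertia. The main conceptual difficulty is in part (2): showing that the degree discrepancy $d$ can actually be absorbed requires combining the kernel hypothesis (which is a purely cohomological statement about $\Pic$) with the geometric input that $R$ is geometrically connected over a finite field. Without both of these, one cannot guarantee the existence of the correcting invertible twisted sheaf, which is exactly why the extra hypotheses in (2) are present.
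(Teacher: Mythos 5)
Your part (1) is essentially the paper's argument: unobstructed local deformations plus algebraization give extensions $\widetilde V_i$ over the (completed or henselized) local rings, the matching with $V_\eta$ over the punctured local rings follows because twisted sheaves over a field form a semisimple category, and the pieces are glued by a formal descent theorem (the paper cites Moret-Bailly, \emph{Un probl\`eme de descente}, for exactly this patching step). No complaints there.

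Part (2), however, has a genuine gap: the mechanism you propose for absorbing the degree discrepancy $d$ does not work. First, pushing forward an invertible sheaf along the finite cover $R\to P$ of degree $>1$ yields a locally free sheaf of higher rank, not an invertible sheaf, so this does not produce elements of $\Pic(\ms P)$. Second, ``twisting $V_\eta$'' by a line bundle accomplishes nothing: $V_\eta$ lives over the generic point, a field, whose Picard group is trivial, so the generic datum cannot carry any degree information. Third, twisting the $V_i$ by invertible sheaves on the residual gerbes $\xi_i$ only changes determinants by torsion classes (characters of the inertia) of degree zero, and in any case would destroy the constraint $\det V_i\cong L_{\xi_i}$. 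The actual role of the hypotheses ``$k$ finite and $R\to P$ geometrically connected'' is different from what you describe: after part (1) one has $W$ with $\det W\tensor L^{-1}\cong M_{\ms P}$ for some $M\in\Pic(\P^1)$, and after twisting by a negative power of $\ms O(1)$ one may take $M$ ample of large degree; Lang--Weil applied to the geometrically connected curve $R$ then produces a closed point $q\in R$ whose image $p\in P$ is a member of the linear system $|M|$. Over such a $p$ the Brauer class of $\ms C$ is split (that is what lying under a point of the ramification extension buys you), so one can perform an elementary transformation of $W$ along an invertible twisted quotient supported at $p$, changing the determinant by exactly $M^{-1}$. Without a point where the class splits, the colength of any elementary transformation is forced to be a multiple of the local index, and the degree discrepancy cannot be absorbed; this pointwise splitting via Lang--Weil is the missing idea in your sketch.
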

\begin{proof}
  Let $P$ be the coarse moduli space of $\ms P$, and let $p_i\in P$ be
  the image of $\xi_i$.  Let $\widehat{\eta}_i$ denote the
  localization of $\spec\widehat{\ms O}_{p_i}$ at the set of maximal
  (i.e., generic) points.  Let $\ms O(1)$ be an ample invertible sheaf on
  $P$.

  To prove the first item, note that the stack $\ms C_{\xi_i}$ is
  tame, as its inertia group is an extension of a reductive group by a
  $\m_n$.  Thus, the infinitesimal deformations of each $V_i$ are
  unobstructed.  Since $\ms C$ is proper over $P$, the Grothendieck
  existence theorem implies that for each $i$ there is a $\ms
  C\times\spec\widehat{\ms O}_{P,p_i}$-twisted sheaf $\widetilde{V}_i$
  of rank $m$ whose restriction to $\ms C_{\xi_i}$ is $V_i$.  On the
  other hand, since the scheme of generic points of $P$ is
  $0$-dimensional, we know that for each $i$ there is an isomorphism
  $(\widetilde{V}_i)_{\widehat{\eta}_i}\cong
  (V_\eta)_{\widehat{\eta}_i}$.  (See \cite{lieblich_period_2007} for
  a similar situation, with more details.)  The basic descent result
  of \cite{MR1432058} shows that we can glue the $\widetilde{V}_i$ to
  $V_\eta$ to produce $W$, as desired.

  To prove the second item, choose any $W$ as in the first part, and
  let $L'=\det W$.  This is an invertible sheaf which is isomorphic to
  $L$ in a neighborhood of each $\xi_i$, and therefore (by hypothesis)
  there is an invertible sheaf $M$ on $P$ such that $L\tensor
  (L')^{-1}\cong M_{\ms P}$.  Twisting $W$ by a suitable (negative!)
  power of $\ms O(1)$, we may assume that $M$ is ample of arbitrarily
  large degree. By the Lang-Weil estimates and hypothesis that $P
  \cong \P ^ 1$, there is a point $q \in R$ whose image $p$ in $P$ is
  an element of $| M |$. Making an elementary transformation of $W$
  along $p$ (over which the Brauer class associated to $\ms{C}$ is
  trivial) produces a locally free $\ms{C}$-twisted sheaf with the
  desired properties.
\end{proof}

\section {Basic facts on the Brauer group}

In this appendix, we review a few basic facts about the Brauer group
of a scheme. We freely use the technology of twisted sheaves, as
introduced in the previous appendix. Let $Z$ be a quasi-compact
separated scheme and $\ms{Z} \to Z$ a $\G_m$-gerbe. We start with a
(somewhat idiosyncratic) definition of the Brauer group of $Z$.

\begin{defn} \label{Brauer group} The cohomology class $[\ms{Z}] \in
  \H ^ 2 (Z, \G_m)$ is said to \emph {belong to the Brauer group of
    $Z$} if there is a non-zero locally free $\ms{Z}$-twisted sheaf of
  finite rank.
\end{defn}

\begin{lem} \label{the Brauer group is a group}
The Brauer group of $Z$ is a group.
\end{lem}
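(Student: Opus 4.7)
The plan is to verify the three group axioms for the subset of $\H^2(Z,\G_m)$ cut out by Definition \ref{Brauer group}: the identity lies in it, it is closed under inverses, and it is closed under the group law.

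For the identity, the trivial gerbe $\B\G_{m,Z}$ has class $0$, and it admits the structure sheaf viewed as an invertible twisted sheaf via the canonical section (equivalently, any invertible sheaf on $Z$ pulled back to $\B\G_{m,Z}$ gives an invertible twisted sheaf of weight one). Thus $0\in\H^2(Z,\G_m)$ belongs to the Brauer group.

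For inverses, I would use the fact that if $\ms Z\to Z$ is a $\G_m$-gerbe with class $\alpha$, then the opposite gerbe $\ms Z^{\mathrm{op}}$ (obtained by inverting the banding isomorphism $\G_m\to\ms I(\ms Z)$) has class $-\alpha$, and that the assignment $V\mapsto V^\vee:=\shom_{\ms O_{\ms Z}}(V,\ms O_{\ms Z})$ sends a locally free $\ms Z$-twisted sheaf of finite rank to a locally free $\ms Z^{\mathrm{op}}$-twisted sheaf of the same rank. In particular, $V^\vee$ is non-zero if $V$ is, so closure under inverses follows.

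For the group operation, given $\G_m$-gerbes $\ms Z\to Z$ and $\ms Z'\to Z$ with classes $\alpha,\alpha'$, the contracted product $\ms Z\wedge\ms Z'$ (as in \cite{MR0344253}) is a $\G_m$-gerbe with class $\alpha+\alpha'$. If $V$ (resp.\ $V'$) is a non-zero locally free $\ms Z$-twisted (resp.\ $\ms Z'$-twisted) sheaf of finite rank, then the external tensor product $V\boxtimes V'$ is a locally free twisted sheaf on the fiber product $\ms Z\times_Z\ms Z'$ whose weights with respect to the two $\G_m$-factors of the inertia are both equal to $1$; it therefore descends along the quotient map $\ms Z\times_Z\ms Z'\to\ms Z\wedge\ms Z'$ (which kills the anti-diagonal $\G_m$) to a non-zero locally free twisted sheaf on $\ms Z\wedge\ms Z'$ of rank $(\rk V)(\rk V')$. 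The main thing to verify carefully is this last descent step, namely that the weights of $V\boxtimes V'$ are compatible with the anti-diagonal $\G_m$-action that one quotients out to form $\ms Z\wedge\ms Z'$; once that compatibility is in hand, the standard faithfully flat descent for quasi-coherent sheaves produces the desired twisted sheaf and the three axioms are complete.
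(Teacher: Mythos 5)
Your proof is correct and follows essentially the same route as the paper's: the identity is represented by the trivial gerbe, and closure under the group law comes from tensoring twisted sheaves over the contracted product $\ms Z\wedge\ms Z'$, with the weight bookkeeping you describe being exactly the standard descent along the anti-diagonal $\G_m$. The paper's sketch in fact omits closure under inverses, which you supply via dualization into the opposite gerbe; that is a genuine (if small) improvement, since the alternative---deducing inverses from the fact that the classes involved are torsion---relies on a property the paper only establishes afterwards.
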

\begin{proof}
  Given two $\G_m$-gerbes $\ms{Z} _ 1 \to Z$ and $\ms{Z} _ 2 \to Z$
  which belong to the Brauer group of $Z$, let $V _ i$ be a locally
  free $\ms{Z} _ i$-twisted sheaf. Then $V _ 1 \tensor V _ 2$ is a
  locally free $\ms{Z} _ 1 \wedge \ms{Z} _ 2$-twisted sheaf, where
  $\ms{Z} _ 1 \wedge \ms{Z} _ 2$ is the $\G_m$-gerbe considered in
  \cite{MR0344253,krashen_index_2008}, which represents the cohomology
  class $[\ms{Z} _ 1] + [\ms{Z} _ 2]$. The neutral element in the
  group is represented by the trivial gerbe $\B \G_m \times Z \to Z$.
\end{proof}

We thus find a distinguished subgroup $\Br (Z) \subset \H ^ 2 (Z,
\G_m)$ containing those classes which belong to the Brauer group of
$Z$. What are the properties of this group?

\begin{prop} \label{basic properties of the Brauer group} The group
  $\Br (Z)$ has the following properties.
\begin{enumerate}
\item An element $[\ms{Z}]$ is trivial in $\Br (Z)$ if and only if
  there is an invertible $\ms{Z}$-twisted sheaf.
\item $\Br (Z)$ is a torsion abelian group.
\item (Gabber) If $Z$ admits an ample invertible sheaf, then the
  inclusion $\Br (Z) \subset \H ^ 2 (Z, \G_m) _ {\text{tors}}$ is an
  isomorphism.
\end{enumerate}
\end{prop}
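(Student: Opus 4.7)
For (1): a $\G_m$-gerbe on $Z$ is trivial if and only if it admits a section, and sections of $\ms Z \to Z$ correspond to invertible $\ms Z$-twisted sheaves. Explicitly, given an invertible $\ms Z$-twisted sheaf $\ms L$, I would construct a morphism $\ms Z \to \B\G_m \times Z$ by sending an object $T \to \ms Z$ to the $\G_m$-torsor $\ms L|_T$; one verifies this is an isomorphism of gerbes over $Z$. Conversely, a trivialization pulls back the tautological invertible sheaf on $\B\G_m$ to an invertible $\ms Z$-twisted sheaf.

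For (2): closure of $\Br(Z)$ under addition is Lemma \ref{the Brauer group is a group}. For the torsion statement, let $V$ be a locally free $\ms Z$-twisted sheaf of rank $n > 0$ representing $[\ms Z]$. The inertia of $\ms Z$ acts on $V$ with weight $1$, hence on $\det V$ with weight $n$; this identifies $\det V$ with an invertible twisted sheaf on the $n$-fold Baer sum $n \ms Z$, whose cohomology class is $n[\ms Z]$. By part (1), $n[\ms Z] = 0$, so every element of $\Br(Z)$ is torsion. Torsion plus closure under addition yields closure under inverses, so $\Br(Z)$ is a subgroup of $\H^2(Z, \G_m)$, and it inherits abelianness.

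For (3): this is Gabber's theorem and is by far the main obstacle. The plan would be to start with a torsion class $\alpha \in \H^2(Z, \G_m)$ of order $n$ and directly build an Azumaya algebra on $Z$ with Brauer class $\alpha$, equivalently a locally free twisted sheaf of positive rank on a representing $\m_n$-gerbe $\ms Z \to Z$. The ample line bundle enters essentially via cohomological vanishing: local Azumaya algebras (which exist on small enough affine pieces by elementary $\m_n$-gerbe arguments) must be patched into a global object, and the vanishing of higher cohomology after tensoring with sufficiently positive powers of the ample sheaf provides the necessary control over obstructions and extensions. Rather than attempt to reproduce the argument, I would refer the reader to de Jong's exposition of Gabber's theorem; I expect this step to be the hard part, as parts (1) and (2) require no hypothesis on $Z$ while (3) requires both the ample-sheaf hypothesis and substantial geometric input.
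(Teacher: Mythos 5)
Your proposal is correct and follows essentially the same route as the paper's sketch: part (1) via the equivalence between invertible twisted sheaves and trivializations of the gerbe (the paper constructs the morphism $\B\G_m \to \ms Z$, $L \mapsto L \tensor \ms L$, which is the same content as your map in the other direction), part (2) via $\det V$ giving an invertible twisted sheaf on the gerbe of class $r[\ms Z]$, and part (3) deferred to Gabber and de Jong's write-up. Your added remark that torsion plus closure under sums yields closure under inverses is a harmless (and welcome) supplement to Lemma \ref{the Brauer group is a group}.
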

\begin{proof} [Sketch of proof]
  If $\ms{L}$ is an invertible $\ms{Z}$-twisted sheaf, then we can
  define a morphism of $\G_m$-gerbes $\B \G_m \to \ms{Z}$ by sending
  an invertible sheaf $L$ to the invertible $\ms{Z}$-twisted sheaf $L
  \tensor \ms{L}$. Since any $\G_m$-gerbe admitting a morphism of
  $\G_m$-gerbes from $\B \G_m$ is trivial, we see that $\ms{Z}$ is
  trivial \cite{MR0344253}.

  Now suppose that $\ms{Z}$ is an arbitrary element of $\Br (Z)$, and
  let $V$ be a locally free $\ms{Z}$-twisted sheaf of rank
  $r$. Writing $\ms{Z} _ r$ for the gerbe corresponding to $r
  [\ms{Z}]$, one can show that the sheaf $\det V$ is an invertible
  $\ms{Z} _ r$-twisted sheaf, thus showing that $r [\ms{Z}] = 0$, as
  desired.

  The last part of the proposition is due to Gabber; a different proof has been
  written down by de Jong \cite{de_jong_a._j._result_???}.
\end{proof}

Given a $\G_m$-gerbe $\ms{Z} \to Z$ and a locally free
$\ms{Z}$-twisted sheaf $V$, the $\ms{O} _ {\ms{Z}}$-algebra $\send
(V)$ is acted upon trivially by the inertia stack of $\ms{Z}$, and
thus is the pullback of a unique sheaf of algebras $\ms{A}$ on
$Z$. The algebras $\ms{A}$ which arise in this manner are precisely
the Azumaya algebras: \'etale forms of $\M _ r (\ms{O} _ Z)$ (for
positive integers $r$). Moreover, starting with an Azumaya algebra, we
can produce a $\G_m$-gerbe by solving the moduli problem of
trivializing the algebra (i.e., making it isomorphic to a matrix
algebra). Further details about this correspondence may be found in
\cite[\S V.4]{MR0344253}.

When $Z = \spec K$ for some field $K$, an Azumaya algebra is precisely
a central simple algebra over $K$, and thus we recover the classical
Brauer group of the field. Note that in this case if $\ms{Z} \to Z$ is
a $\G_m$-gerbe, any nonzero coherent $\ms{Z}$-twisted sheaf is locally
free. Moreover, we know that $\ms{Z} \tensor \widebar {K}$ is
isomorphic to $\B \G_m \tensor \widebar {K}$, and therefore that there
is an invertible $\ms{Z} \tensor \widebar {K}$-twisted sheaf. Pushing
forward to $\ms{Z}$, we see that there is a nontrivial quasi-coherent
$\ms{Z}$-twisted sheaf $\ms{Q}$. Since $\ms{Z}$ is Noetherian,
$\ms{Q}$ is a colimit of coherent $\ms{Z}$-twisted subsheaves. This
shows that $\ms{Z}$ belongs to the Brauer group of $Z$. We have just
given a geometric proof of the classical Galois cohomological result
$\Br (K) = \H ^ 2 (\spec K, \G_m)$.

Now assume that $Z$ is integral and Noetherian with generic point
$\eta$, and let $\ms{Z} \to Z$ in arbitrary $\G_m$-gerbe. By the
preceding, there is a coherent $\ms{Z}$-twisted sheaf of positive
rank.

\begin{defn} \label{period and index} The \emph {index} of $[\ms{Z}]$,
  denoted $\ind ([\ms{Z}])$, is the minimal nonzero rank of a coherent
  $\ms{Z}$-twisted sheaf. The \emph {period} of $[\ms{Z}]$, denoted
  $\per ([\ms{Z}])$, is the order of $[\ms{Z}] _ {\eta}$ in $\H ^ 2
  (\eta, \G_m)$.
\end{defn}

We have written Definition \ref{period and index} so that it only
pertains to generic properties of $\ms{Z}$ and $Z$. One can imagine
more general definitions (e.g., using locally free $\ms Z$-twisted
sheaves; when $Z$ is regular of dimension 1 or 2, the basic properties
of reflexive sheaves tell us that the natural global definition will
actually equal the generic definition. When $Z = \spec K$, is easy to
say what the index of a Brauer class $\alpha$ is: $\alpha$
parameterizes a unique central division algebra over $K$, whose
dimension over $K$ is $n ^ 2$ for some positive integer $n$. The index
of $\alpha$ is then $n$.

The basic fact governing the period and index is the following.

\begin{prop} \label{basic period-index facts} For any $\alpha \in \Br
  (K)$, there is a positive integer $h$ such that $\per (\alpha) \mid
  \ind (\alpha)$ and $\ind (\alpha) \mid \per (\alpha) ^ h$.
\end{prop}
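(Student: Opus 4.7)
The plan is to handle the two divisibilities separately: the first is a direct consequence of the twisted-sheaf formalism developed in the appendix, while the second is obtained from a Sylow subfield argument showing that $\ind(\alpha)$ and $\per(\alpha)$ have the same set of prime divisors.

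For $\per(\alpha)\mid\ind(\alpha)$, I would imitate the proof of Proposition \ref{basic properties of the Brauer group}(2): pick a $\G_m$-gerbe $\ms Z\to\spec K$ representing $\alpha$, and let $V$ be a locally free $\ms Z$-twisted sheaf of rank $n=\ind(\alpha)$. Then $\det V$ is an invertible twisted sheaf for the gerbe $\ms Z_n$ representing $n\alpha$, and part (1) of the cited proposition forces $n\alpha=0$, i.e., $\per(\alpha)\mid n$.

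For $\ind(\alpha)\mid\per(\alpha)^h$, it suffices to show that every prime $q$ dividing $\ind(\alpha)$ also divides $\per(\alpha)$: writing $\per(\alpha)=\prod_i p_i^{a_i}$ and $\ind(\alpha)=\prod_i p_i^{b_i}$ (over the same primes, with $b_i\geq a_i$ by the first divisibility), any $h\geq\max_i\lceil b_i/a_i\rceil$ works. I would prove the prime-divisor claim by contradiction. Assume $q\mid\ind(\alpha)$ but $q\nmid\per(\alpha)$, and choose a finite Galois extension $L/K$ splitting $\alpha$ --- for instance, the Galois closure of a maximal separable subfield of the underlying division algebra. Let $Q\leq\Gal(L/K)$ be a Sylow $q$-subgroup and $F=L^Q$, so that $[F:K]$ is prime to $q$ while $[L:F]=|Q|$ is a $q$-power. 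The class $\alpha_F\in\Br(F)$ has $\per(\alpha_F)\mid\per(\alpha)$ (prime to $q$) and $\ind(\alpha_F)\mid[L:F]$ (a $q$-power, since $L$ splits $\alpha_F$); combined with $\per\mid\ind$ from the first step, this forces $\alpha_F=0$. Pushing a rank-$1$ twisted sheaf over $F$ forward along $\spec F\to\spec K$ then yields a locally free twisted sheaf of rank $[F:K]$ representing $\alpha$, so $\ind(\alpha)\mid[F:K]$, which is prime to $q$ --- contradicting $q\mid\ind(\alpha)$.

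The only nontrivial inputs to this plan are classical facts about Brauer classes of fields: the existence of a finite Galois splitting field for $\alpha$, and the pushforward bound saying $\ind(\alpha)\mid[F:K]$ for every finite separable splitting field $F/K$. The main conceptual obstacle is the second of these, which is the geometric (twisted-sheaf) incarnation of restriction--corestriction and carries the essential content of the Sylow argument.
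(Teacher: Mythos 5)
Your proposal is correct, and it is essentially the argument the paper has in mind: the paper gives no proof of Proposition \ref{basic period-index facts}, dismissing it as ``an exercise in Galois cohomology'' and deferring to Lemma 2.1.1.3 of \cite{lieblich_twisted_2008}, which runs exactly along your lines --- $\per\mid\ind$ via the determinant of a minimal-rank twisted sheaf (as in Proposition \ref{basic properties of the Brauer group}(2)), and $\ind\mid\per^h$ by showing the two numbers have the same prime support using a Sylow subfield of a Galois splitting field together with restriction--corestriction. Your geometric substitute for corestriction (pushforward of a rank-one twisted sheaf along $\spec F\to\spec K$ giving a rank $[F:K]$ twisted sheaf) is the same device the paper uses in the proof of Theorem \ref{Hasse principle period-index theorem}, so the whole argument stays within the paper's toolkit.
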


The proof of Proposition \ref{basic period-index facts} is an exercise
in Galois cohomology; the reader is referred to (for example)
\cite[Lemma 2.1.1.3]{lieblich_twisted_2008}. One immediate consequence
of Proposition \ref{basic period-index facts} is the question: how can
we understand $h$? For example, does $h$ depend only on $K$ (in the
sense that there is a value of $h$ which works for all $\alpha \in \Br
(K)$)? If so, what properties of $K$ are being measured by $h$? And so
on. Much work has gone into this problem; for a summary of our current
expectation the reader is referred to \cite{lieblich_twisted_2008}.

\section {Ramification of Brauer classes}

In this section, we recall the basic facts about the ramification
theory of Brauer classes. We also describe how to split ramification
by a stack. To begin with, we consider the case $X = \spec R$ with $R$
a complete discrete valuation ring with valuation $v$. Fix a uniformizer $t$ of
$R$; let $K$ and $\kappa$ denote the fraction field and residue field
of $R$, respectively. Write $j: \spec K \to \spec R$ for the natural
inclusion. Throughout, we only consider Brauer classes $\alpha \in \Br
(K)$ of period relatively prime to $\ch (\kappa)$; we write $\Br (K)
'$ for the subgroup of classes satisfying this condition.

The usual theory of divisors yields an exact sequence of \'etale
sheaves on $\spec R$
$$
0 \to \G_m \to j _ {*} \G_m \to \Z _ {(t)} \to 0,
$$
which yields a map $$\H ^ 2 (\spec R, j _ {*} \G_m) \to \H ^ 2 (\spec
R, \Z _ {(t)}) = \H ^ 2 (\kappa, \Z) = \H ^ 1 (\kappa, \Q / \Z).$$
Since any $a \in \Br (K) '$ has an unramified splitting field, the
Leray spectral sequence for $\G_m$ on $j$ shows that $\H ^ 2 (\spec R,
j _ {*} \G_m) ' = \Br (K) '$ (where the $'$ denotes classes with
orders invertible in $R$). Putting this together yields the \emph
{ramification sequence}
$$
0 \to \Br (R)' \to \Br (K)' \to \H ^ 1 (\kappa, \Q / \Z).
$$
The last group in the sequence parameterizes cyclic extensions of the
residue field $\kappa$. Suppose for the sake of simplicity that
$\kappa$ contains a primitive $n$th root of unity. The ramification of
a cyclic algebra $(a, b)$ is given by the extension of $\kappa$
generated by the $n$th root of $(-1) ^ {v (a) v (b)} a ^ {v (b)} / b ^
{v (a)}$. In particular, given any element $\widebar u \in \kappa ^
{*}$, the algebra $(u, t)$ has ramification extension $\kappa
(\widebar u ^ {1 / n})$, where $u$ is any lift of $\widebar u$ in $R ^
{*}$. With our assumption about roots of unity, any cyclic extension
of degree $n$ is given by extracting roots of some $\widebar u$. This
has the following two useful consequences. Given a positive integer
$n$, let $\ms{R} _ n \to \spec R$ denote the stack of $n$th roots of
the closed point of $\spec R$, as in \cite{cadman_using_2007}.

\begin{prop} \label{ramification} Assume that $\kappa$ contains a
  primitive $n$th root of unity $\zeta$. Fix an element $\alpha \in
  \Br (K) [n]$.
\begin{enumerate}
\item There exists $u \in R ^ {*}$ and $\alpha ' \in \Br (R)$ such
  that $\alpha = \alpha ' + (u, t) \in \Br (K)$.
\item There exists $\beta \in \Br (\ms{R} _ n)$ whose image in $\Br
  (K)$ under the restriction map is $\alpha$.
\end{enumerate}
\end{prop}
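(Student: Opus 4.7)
The plan is to handle (1) by direct application of the ramification sequence recalled in the preamble, and then to deduce (2) by extending the two summands of the decomposition in (1) separately.

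For (1), I would start from the ramification sequence
$$0 \to \Br(R)' \to \Br(K)' \xrightarrow{\partial} \H^1(\kappa, \Q/\Z)$$
and restrict to $n$-torsion. The chosen primitive $n$-th root of unity $\zeta \in \kappa$ identifies $\H^1(\kappa, \Z/n) \simeq \H^1(\kappa, \mu_n) = \kappa^*/\kappa^{*n}$ via Kummer. The crucial computation is that $\partial$ agrees with the tame symbol on cup products, so that $\partial((u,t)_n) = \widebar{u}$ for every $u \in R^*$ (since $v(u) = 0$ and $v(t) = 1$, and the sign $(-1)^{v(u)v(t)} = 1$). Hence, given $\alpha \in \Br(K)[n]$, lifting $\partial(\alpha) \in \kappa^*/\kappa^{*n}$ to any $u \in R^*$ produces a class $\alpha' := \alpha - (u,t)_n$ with trivial ramification, which by exactness lies in $\Br(R)' \subset \Br(R)$.

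For (2), by part (1) it suffices to lift $\alpha'$ and $(u,t)_n$ separately to $\Br(\ms R_n)$. The first lifts by functoriality along the structure morphism $\ms R_n \to \spec R$. For the second, I would exploit the tautological data on the root stack: $\ms R_n$ carries an invertible sheaf $L$, a canonical isomorphism $\psi: L^{\otimes n} \simto \O_{\ms R_n}$, and a section $s \in L$ with $\psi(s^{\otimes n}) = t$. The pair $(L,\psi)$ defines a class $\lambda \in \H^1(\ms R_n, \mu_n)$; because $s$ becomes a unit on $\spec K$, trivializing $L|_{\spec K}$ by $s$ and unwinding the definitions shows $\lambda|_{\spec K} = \{t\} \in K^*/K^{*n}$. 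Cupping $\lambda$ with the pullback of $\{u\} \in \H^1(\spec R, \mu_n)$, and using $\zeta$ to identify $\mu_n^{\otimes 2} \simeq \mu_n$, produces a class in $\H^2(\ms R_n, \mu_n)$ whose restriction to $\spec K$ is $\{u\} \cup \{t\} = (u,t)_n$.

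The remaining step is to promote this cohomology class to an element of $\Br(\ms R_n)$. The cleanest route is to invoke Gabber's theorem (Proposition \ref{basic properties of the Brauer group}(3)) using that $\ms R_n$ is quasi-projective over $R$ and so carries an ample invertible sheaf. Alternatively, and more in the spirit of the rest of the paper, one can exhibit a $\mu_n$-equivariant Azumaya algebra over the $\mu_n$-cover $\spec R[s]/(s^n - t) \to \ms R_n$ generalizing the $\mu_2$-construction in the proof of Proposition \ref{example point}: take the cyclic algebra generated by $x, \tilde{y}$ with $x^n = u$, $\tilde{y}^n = 1$ and $x\tilde{y} = \zeta \tilde{y} x$, with $\mu_n$ acting on $x$ with weight $0$ and on $\tilde{y}$ with weight $1$. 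The substitution $y = s\tilde{y}$ recovers $(u,t)_n$ on the generic fiber. The main obstacle in a fully careful write-up is bookkeeping: one must track signs and $\mu_n$-weights so that the tame-symbol calculation in (1) lines up with the cup-product calculation in (2), and one must check that the stack-theoretic class $\lambda$ is independent of the presentation of the closed point as $\operatorname{div}(t)$ up to $R^*$.
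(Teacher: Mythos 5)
Your proposal is correct and, in its second (preferred) route for part (2), is essentially identical to the paper's proof: the paper also reduces to the symbol $(u,t)$ via the ramification computation of the preceding paragraph and then writes down the $\m_n$-equivariant Azumaya algebra generated by $x$ and $\tilde y = y/t^{1/n}$ on $\spec R[t^{1/n}]$. The only caveat is that your first alternative for promoting the cup-product class to $\Br(\ms R_n)$ quotes Gabber's theorem, which is stated in the appendix for schemes with an ample invertible sheaf rather than for root stacks; the explicit equivariant algebra makes that detour unnecessary.
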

\begin{proof}
  The first item follows immediately from the paragraph preceding this
  proposition: we can find $u$ such that $(u, t)$ has the same
  ramification as $\alpha$, and subtracting this class yields an
  element of $\Br (R)$. To prove the second item, it follows in the
  first that it suffices to prove it for the class $(u, t)$. Recall
  that $\ms{R} _ n$ is the stacky quotient of $\spec R [t ^ {1 / n}]$
  by the natural action of $\m _ n$; to extend $(u, t)$ to $\ms{R} _
  n$, it suffices to find a $\m _ n$-equivariant Azumaya algebra in
  $(u, t) _ {K (t ^ {1 / n})}$. Recall that $(u, t)$ is generated by
  $x$ and $y$ such that $x ^ n = u$, $y ^ n = t$, and $xy = \zeta
  yx$. Letting $\tilde y = y / t ^ {1 / n}$, the natural action of $\m
  _ n$ on $t ^ {1 / n}$ yields an equivariant Azumaya algebra in $(u,
  t) _ {K (t ^ {1 / u})}$, as desired.
\end{proof}

The following corollary is an example of how one applies Proposition
\ref{ramification} in a global setting. More general results are true,
using various purity theorems.

\begin{cor} \label{global ramification} Let $C$ be a proper regular
  curve over a field $k$ which contains a primitive $n$th root of
  unity for some $n$ invertible in $k$. Suppose $\alpha \in \Br (k
  (C)) [n]$ is ramified at $p _ 1, \dots, p _ r$, and let $\ms{C} \to
  C$ is a stack of $n$th roots of $p _ 1+ \dots + p _ r$. There is an
  element $\alpha ' \in \Br (\ms{C}) [n]$ whose restriction to the
  generic point is $\alpha$.
\end{cor}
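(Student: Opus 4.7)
The plan is to patch two kinds of local extensions: an extension on the open complement of the ramification locus coming from purity, and an extension across each $p_i$ supplied by Proposition~\ref{ramification}(2).

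Let $U = C \setminus \{p_1, \dots, p_r\}$. Because $U$ is a regular scheme of dimension $1$ and $\alpha$ is unramified at every point of $U$, the usual ramification sequence for a regular curve yields a lift $\alpha_U \in \Br(U)[n]$, which we view as a class on the open substack $\ms C \times_C U \simto U$. For each $i$, I would apply Proposition~\ref{ramification}(2) to the henselian local ring $R_i^h$ of $C$ at $p_i$: the proof of that proposition is entirely algebraic (it presents $\alpha$ as $\alpha' + (u,t)$ and constructs an explicit $\m_n$-equivariant Azumaya algebra) and goes through verbatim over any henselian DVR containing a primitive $n$-th root of unity with $n$ invertible. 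This produces a class on $\ms C \times_C \spec R_i^h$, which by a standard limit argument spreads to a class $\beta_i \in \Br(\ms C \times_C V_i)[n]$ for some étale neighborhood $V_i \to C$ of $p_i$.

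The substacks $\ms C \times_C U$ and $\ms C \times_C V_i$ then form an étale cover of $\ms C$. Every pairwise overlap sits inside $U$ or inside $V_i \setminus \{p_i\}$, so is a regular $1$-dimensional scheme whose Brauer group injects into $\Br(k(C))$. Since $\alpha_U$ and each $\beta_i$ restrict to $\alpha$ at the generic point of $C$, all the local classes agree on overlaps (and automatically on triple overlaps), so by étale descent for Azumaya algebras, equivalently the sheaf property of $\H^2(-,\G_m)$, the local data glue to a global class $\alpha' \in \Br(\ms C)[n]$ whose restriction to the generic point is $\alpha$.

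The only genuinely technical step is the passage from the complete DVR in the statement of Proposition~\ref{ramification}(2) to a henselian localization, and thence to an étale neighborhood on which $\beta_i$ is defined. Because Proposition~\ref{ramification}(2) is proved by an explicit cyclic-algebra construction rather than by a cohomological obstruction argument, this transition amounts to bookkeeping rather than a genuine obstacle, and in particular no Artin approximation is needed; everything else reduces to purity on a regular $1$-dimensional scheme and sheaf-theoretic gluing.
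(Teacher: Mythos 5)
Your reduction steps are fine: purity on the regular one-dimensional scheme $U=C\setminus\{p_1,\dots,p_r\}$ gives $\alpha_U\in\Br(U)[n]$, and the explicit cyclic-algebra construction in Proposition \ref{ramification}(2) does transfer from the complete to the henselian local ring and spreads out to an \'etale neighborhood $V_i$ of $p_i$. The gap is in the last step. Having classes $\alpha_U$ and $\beta_i$ that merely \emph{agree as elements of the Brauer group} on the pairwise overlaps of the \'etale cover $\{U,V_1,\dots,V_r\}$ is not enough to glue: $\H^2(-,\G_m)$ is not a sheaf, and there is no ``sheaf property'' to invoke. In the \v{C}ech-to-derived-functor spectral sequence a compatible family of classes on the pieces is an element of $\check\H^0(\mathfrak U,\underline{\H}^2(\G_m))$, and its lift to $\H^2(\ms C,\G_m)$ is obstructed by differentials landing in $\check\H^2(\mathfrak U,\sPic)$ and beyond; equivalently, to descend actual Azumaya algebras you must choose isomorphisms on double overlaps (which exist only after stabilizing by matrix algebras, since equality of Brauer classes gives only Morita equivalence) and then verify a cocycle condition on triple overlaps, which is not automatic. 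Your parenthetical ``and automatically on triple overlaps'' is exactly the point at which this is elided. A secondary symptom of working only with classes: the self-overlap $V_i\times_C V_i$ contains the diagonal over $p_i$, so it does not sit inside $U$ or $V_i\setminus\{p_i\}$ as claimed; this is harmless for comparing classes but again shows no genuine descent datum has been produced.

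The paper avoids this entirely by gluing honest algebras one point at a time along the \emph{formal} punctured disk: given an Azumaya algebra $B$ over an open $U$ and one over $\widehat{\ms C}_q$ (supplied by Proposition \ref{ramification}(2)), the overlap $U\times_C\widehat{\ms C}_q$ is the single point $\widehat\eta=\spec\widehat K_q$, over which equal Brauer classes become isomorphic algebras after replacing $B$ by $M_m(B)$; Moret-Bailly's descent theorem \cite{MR1432058} (as in Proposition \ref{lift starter}) then produces an algebra on $U\cup\{q\}$, and one iterates over the finitely many missing points. Because the cover has two elements and the overlap is the spectrum of a field, there are no higher cocycle conditions and no spectral-sequence obstructions. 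To repair your argument you would either need to supply genuine descent data (isomorphisms plus the cocycle check), or replace the $(r+1)$-element \'etale cover by this two-piece formal patching.
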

\begin{proof}
  Let $A$ be a central simple algebra over $k (C)$ with Brauer class
  $\alpha$. For any point $q \in C$, we know (at the very least) that
  there is a positive integer $m$ and an Azumaya algebra over the
  localization $\ms{C} _ q$ which is contained in $M _ m (A)$. Suppose
  $U \subset \ms{C}$ is an open substack over which there is an
  Azumaya algebra $B$ with generic Brauer class $\alpha$. Given a
  closed point $q \in C \setminus U$, we can choose some $m$ such that
  there is an Azumaya algebra $B '$ over $\widehat{\ms{C}} _ q$ whose
  restriction to the generic point $\widehat{\eta}$ of $\widehat C_q$
  is isomorphic to $M _ m (B)_{\widehat{\eta}}$. Since $U \times_C
  \widehat{\ms{C}} _ q = \widehat{\eta}$, we can glue $B '$ to $M _ m
  (B)$ as in the proof of Proposition \ref{lift starter} (using
  \cite{MR1432058}) to produce an Azumaya algebra over $U \cup \{ q
  \}$. Since $C \setminus U$ is finite, we can find some $m$ such that
  $M _ m (A)$ extends to an Azumaya algebra over $\ms{C}$, as desired.
\end{proof}

The following is a more complicated corollary of Proposition \ref{ramification}, using purity of the Brauer group on a surface.  We omit the proof.

\begin{cor} \label{surface splitting} Let $X$ be a connected regular
  Noetherian scheme pure of dimension $2$ with function field $K$.
  Suppose $U\subset X$ is the complement of a simple normal crossings
  divisor $D=D_1+\dots+D_r$ and $\alpha\in\Br(U)[n]$ is a Brauer class
  such that $n$ is invertible in $\kappa(D_i)$ for $i=1,\dots,r$.  If
  $\ms X\to X$ is the root construction of order $n$ over each $D_i$
  then there is a class $\widetilde{\alpha}\in\Br(\ms X)$ such that
  $\widetilde{\alpha}_U=\alpha$.
\end{cor}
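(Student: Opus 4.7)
The plan is to mimic the proof of Corollary \ref{global ramification} in two dimensions: first I extend $\alpha$ across the generic points of each $D_i$ in $\ms X$ using the local result Proposition \ref{ramification}(2), then I glue these local extensions to $\alpha$ on $U$ to obtain an extension of $\alpha$ to $\ms X \setminus Z$, where $Z \subset \ms X$ is the finite set of residual gerbes over the crossings $D_i \cap D_j$, and finally I extend across the codimension-$2$ locus $Z$ using purity of the Brauer group on the regular two-dimensional Deligne-Mumford stack $\ms X$.

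For the first step, fix an Azumaya algebra $A$ over $U$ representing $\alpha$, and for each generic point $\eta_i$ of $D_i$ let $R_i$ be the completion of $\ms O_{X,\eta_i}$, a complete discrete valuation ring with fraction field $K_i$ and residue field $\kappa(D_i)$ in which $n$ is invertible. After a finite \'etale cover $R_i \to R_i'$ adjoining a primitive $n$th root of unity, Proposition \ref{ramification}(2) produces an Azumaya algebra on $\spec R_i' \times_X \ms X$ extending $\alpha_{K_i'}$; Galois descent for the Brauer group along this \'etale cover then yields a Brauer class on $\ms X_i := \spec R_i \times_X \ms X$ restricting to $\alpha_{K_i}$ on the generic point, represented by an Azumaya algebra $B_i$ inside some matrix algebra $M_{m_i}(A_{K_i})$. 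Taking a uniform multiplier $N$ and applying the faithfully flat descent result of \cite{MR1432058} exactly as in the proof of Corollary \ref{global ramification}, one glues $M_N(A)$ with the enlarged $B_i$ along their common generic fibers, producing an Azumaya algebra over $\ms X \setminus Z$ and hence a class in $\Br(\ms X \setminus Z)$ restricting to $\alpha$ on $U$.

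For the codimension-$2$ step, observe that since $D$ is SNC and $X$ is regular of dimension $2$, the root stack $\ms X$ is a regular Noetherian tame Deligne-Mumford stack of dimension $2$ and $Z$ is a closed substack of codimension $2$. Classical cohomological purity of the Brauer group in codimension $2$ on regular surfaces, applied \'etale-locally via the quotient presentation of $\ms X$, shows that the restriction $\H^2(\ms X, \G_m)_{\mathrm{tors}} \to \H^2(\ms X \setminus Z, \G_m)_{\mathrm{tors}}$ is an isomorphism; combined with Gabber's theorem (Proposition \ref{basic properties of the Brauer group}(3)) identifying torsion $\H^2$ with the Brauer group, this produces the desired $\widetilde{\alpha} \in \Br(\ms X)$ with $\widetilde{\alpha}|_U = \alpha$.

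The main obstacle is the purity step at the crossings. \'Etale-locally at a crossing $p \in D_i \cap D_j$, the stack $\ms X$ is the quotient of a regular affine scheme $Y$ by a tame $\m_n \times \m_n$-action, and the Leray spectral sequence for the $(\m_n \times \m_n)$-torsor $Y \to \ms X$ reduces purity on $\ms X$ to classical Grothendieck-style purity of the Brauer group in codimension $2$ on the regular surface $Y$. The Galois descent used in the local extension is then routine because $n$ is invertible in each $\kappa(D_i)$.
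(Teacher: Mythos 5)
The paper deliberately omits the proof of Corollary \ref{surface splitting}, saying only that it follows from Proposition \ref{ramification} together with purity of the Brauer group on a surface; your three-step plan (extend over the generic points of the $D_i$, glue to $\alpha|_U$ \`a la Corollary \ref{global ramification} using \cite{MR1432058}, then extend over the finite set of crossing points by purity) is exactly the intended argument, and your treatment of the gluing and of the purity step at the crossings (via the \'etale-local chart $[\spec \ms O[u,v]/(u^n-x,v^n-y)/\m_n\times\m_n]$ and classical codimension-$2$ purity on the regular cover) is sound.

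The one genuine soft spot is your handling of the roots-of-unity hypothesis in Proposition \ref{ramification}. You pass to $R_i'=R_i[\zeta_n]$, apply Proposition \ref{ramification}(2) there, and then invoke ``Galois descent for the Brauer group'' to get a class on $\ms X_i=\spec R_i\times_X\ms X$. This does not work as stated: a Galois-invariant Brauer class on $\ms X_i'$ need not descend (the obstruction lives in the Hochschild--Serre spectral sequence), and even granting a class $\beta\in\Br(\ms X_i)$ hitting $\beta'$, you would only know $\beta|_{\widehat K_i}-\alpha|_{\widehat K_i}$ dies over $\widehat K_i'$, not that it vanishes. Equivalently, the condition ``$\alpha$ is unramified on the root stack'' is not visibly \'etale-local on the residue field, since $\H^1(\kappa(D_i),\Q/\Z)\to\H^1(\kappa(D_i)(\zeta_n),\Q/\Z)$ has kernel $\Hom(\Gal(\kappa'/\kappa),\Q/\Z)\neq 0$. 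The standard repair avoids base change entirely: under the ramification sequence for the root stack $(\ms R_i)_n\to\spec R_i$, the ramification of $\alpha|_{\widehat K_i}$ along the closed residual gerbe is $e$ times its ramification along $D_i$, where $e=n$ is the ramification index of the root construction; since $\alpha$ is $n$-torsion this is zero, so $\alpha|_{\widehat K_i}$ extends to $\Br((\ms R_i)_n)$ with no hypothesis on roots of unity. (Alternatively, one can correct the descended class by the difference $\beta|_{\widehat K_i}-\alpha|_{\widehat K_i}$, which is split by the unramified extension $\widehat K_i'$ and hence already lies in $\Br(R_i)$.) With that step patched, your argument is complete.
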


We end this discussion with an intrinsic characterization of the
ramification extension as a moduli space.

\begin{prop} \label{intrinsic ramification} Let $X$ be a scheme on
  which $n$ is invertible such that $\Gamma (X, \ms{O})$ contains a
  primitive $n$th root of unity. Suppose $\pi: \ms{X} \to X$ is a $\m
  _ n$-gerbe. Given a further $\m _ n$-gerbe $\ms{Y} \to \ms{X}$, the
  relative Picard stack ${}_\tau\sPic _ {\ms{Y} / X}$ of invertible
  $\ms Y$-twisted sheaves is a $\G_m$-gerbe over a $\Z / n \Z$-torsor
  $T$. Moreover, the Brauer class of $\ms{Y}$ in $\H ^ 2 (\ms{X},
  \G_m)$ is the pullback of a class from $X$ if and only if $T$ is
  trivial.
\end{prop}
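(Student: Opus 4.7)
The plan is to reduce the proposition to a Leray spectral sequence calculation for $\pi\colon\ms X\to X$ with $\G_m$-coefficients, where the key technical input is the vanishing $R^2\pi_\ast\G_m=0$.

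I would start with that vanishing. At a geometric point $\bar x\in X$, the stalk of $R^q\pi_\ast\G_m$ is $\H^q(\B\m_{n,\kappa(\bar x)},\G_m)$. For $q=1$ this is the character group $\hom(\m_n,\G_m)$, so that $R^1\pi_\ast\G_m$ is the constant sheaf $\Z/n\Z$ (the hypothesis that $X$ contains a primitive $n$th root of unity makes the identification global). For $q=2$ it is the group cohomology $\H^2(\m_n,\kappa(\bar x)^\ast)$, which vanishes because $\kappa(\bar x)^\ast$ is divisible. The Leray spectral sequence then collapses in low degree to an exact sequence
$$
\H^2(X,\G_m)\xrightarrow{\pi^\ast}\H^2(\ms X,\G_m)\xrightarrow{e}\H^1(X,\Z/n\Z)\to\H^3(X,\G_m).
$$

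Next I would recast ${}_\tau\sPic_{\ms Y/X}$ as a pushforward. Pushing out $\ms Y\to\ms X$ along $\m_n\hookrightarrow\G_m$ produces a $\G_m$-gerbe $\ms Y^{\G_m}$ on $\ms X$ whose sections are by definition invertible $\ms Y$-twisted sheaves, so ${}_\tau\sPic_{\ms Y/X}\cong\pi_\ast\ms Y^{\G_m}$. The argument proving Claim \ref{gerbe torsor mama} works verbatim for any abelian band, so this pushforward is a $\G_m$-gerbe over an $R^1\pi_\ast\G_m=\Z/n\Z$-pseudotorsor. To promote the pseudotorsor to a torsor, I would check étale-local neutrality of $\ms Y^{\G_m}$ on $X$: for any geometric point $\bar x$, the sequence above applied over the strict henselization $\ms O_{X,\bar x}^{sh}$, combined with the vanishing of the Picard and Brauer groups of strictly henselian local rings and of $\H^1$ with $\Z/n\Z$-coefficients, gives $\H^2(\ms X|_{\ms O_{X,\bar x}^{sh}},\G_m)=0$. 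Hence $\ms Y^{\G_m}$ is trivial on the strict henselization and, by standard limit/approximation, on a suitable étale neighborhood; this yields the required $\Z/n\Z$-torsor $T$.

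Finally, I would observe that the edge map $e$ in the Leray spectral sequence is by construction the map sending a $\G_m$-gerbe $\ms G$ on $\ms X$ to the class in $\H^1(X,R^1\pi_\ast\G_m)$ of the $R^1\pi_\ast\G_m$-torsor underlying $\pi_\ast\ms G$. Applied to $\ms G=\ms Y^{\G_m}$, this gives $e([\ms Y])=[T]$, so the exact sequence above immediately yields that $[\ms Y]$ is the pullback of a class from $X$ if and only if $[T]=0$. The main obstacle is the vanishing $R^2\pi_\ast\G_m=0$; this is what concentrates the obstruction to pullback in the single class $[T]\in\H^1(X,\Z/n\Z)$. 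Once it is in hand, everything else — the identification ${}_\tau\sPic_{\ms Y/X}\cong\pi_\ast\ms Y^{\G_m}$, the extension of Claim \ref{gerbe torsor mama} to $\G_m$-gerbes, and the description of the edge map — is formal.
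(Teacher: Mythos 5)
Your argument is correct, and it organizes the proof differently from the paper. The paper's proof is moduli-theoretic: it asserts by ``standard methods'' that ${}_\tau\sPic_{\ms Y/X}$ is a $\G_m$-gerbe over an algebraic space $P\to X$, produces the $\Z/n\Z$-action from $\Pic_{\ms X/X}\cong\Z/n\Z$ by tensoring, and checks simple transitivity fiberwise over geometric points of $X$, where $\Br(\ms X\tensor\widebar k)=0$ supplies an invertible twisted sheaf $L$ and every other one is $L\tensor\Lambda$ with $\Lambda\in\Pic(\ms X\tensor\widebar k)$; the ``moreover'' clause is left implicit. You instead run the full Leray spectral sequence for $\pi$ with $\G_m$-coefficients, isolating $R^2\pi_\ast\G_m=0$ and $R^1\pi_\ast\G_m=\Z/n\Z$ as the key local computations, and then read off both halves of the proposition from the resulting four-term exact sequence together with the Giraud-style identification of the edge map with ``sheafify the pushforward'' (the same identification the paper uses in Section 1 for map (\ref{second map})). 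The underlying local facts are the same in both arguments --- $\H^1$ and $\H^2$ of $\B\m_n$ with $\G_m$-coefficients over a strictly henselian base --- but your packaging has the concrete advantage that the ``moreover'' statement falls out of exactness at $\H^2(\ms X,\G_m)$ rather than needing a separate argument, while the paper's version more directly exhibits the geometric torsor structure on the coarse space. One small point of care: the stalk of $R^2\pi_\ast\G_m$ at $\bar x$ is computed over the strict henselization $\ms O^{sh}_{X,\bar x}$, not over $\kappa(\bar x)$, so the divisibility you need is that of $(\ms O^{sh}_{X,\bar x})^\times$ (which holds since $n$ is invertible); your later local-triviality step shows you are aware of this, but the stalk computation should be phrased accordingly.
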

\begin{proof}
  Standard methods show that ${}_\tau\sPic _ {\ms{Y} / X}$ is a
  $\G_m$-gerbe over a flat algebraic space of finite presentation $P
  \to X$. Since the relative Picard space $\Pic _ {\ms{X} / X}$ is
  isomorphic to the constant group scheme $\Z / n \Z$, tensoring with
  invertible sheaves on $\ms{X}$ gives an action
$$
\Z / n \Z \times P \to P.
$$
To check that this makes $B$ a torsor, it suffices (by the obvious
functoriality of the construction) to treat the case in which $X$ is
the spectrum of an algebraically closed field $k$. In this case, $\Br
(\ms{X}) = 0$; choosing an invertible $\ms{Y}$-twisted sheaf $L$, we
see that all invertible $\ms{Y}$-twisted sheaves $M$ of the form $L
\tensor \Lambda$, where $\Lambda$ is invertible sheaf on
$\ms{X}$. This gives the desired result.
\end{proof}

Starting with a complete discrete valuation ring $R$ with fraction of $K$, an
element $\alpha \in \Br (K)$ gives rise to two cyclic extensions of
the residue field $\kappa$ of $R$: the classical ramification
extension and the moduli space produced in Proposition \ref{intrinsic
  ramification}. In fact, these two extensions are isomorphic. We will
use this fact, but we omit the details.

\end{document}